\numberwithin{equation}{section}
\theoremstyle{plain}
\newtheorem{thm}{Theorem}[section]
\newtheorem{rem}{Remark}[section]
\newtheorem{prop}{Proposition}[section]
\newtheorem{cor}{Corollary}[section]
\newtheorem{lem}{Lemma}[section]
\newtheorem{definition}{Definition}[section]
\newtheorem{exa}{Example}[section]
\def\build#1_#2^#3{\mathrel{\mathop{\kern 0pt#1}\limits_{#2}^{#3}}}
\def\videbox{\mathbin{\vbox{\hrule\hbox{\vrule height1.4ex \kern.6em\vrule height1.4ex}\hrule}}}
\newcommand{\ST}{\mathrm{ST}(G)}
\newcommand{\Lip}{\mathrm{Lip}_1(d_G)}
\newcommand{\LipT}{\mathrm{Lip}_1(d_T)}
\newcommand{\LipTast}{\mathrm{Lip}_1(d_{T_\ast})}
\newcommand{\ArEl}{\mathrm{AE}}
\newcommand{\child}{\mathrm{child}}
\def\supp{\mathop{\rm supp}\nolimits}
\newcommand{\R}{{\mathbb R}}
\newcommand{\RR}{{\mathbb R}}
\newcommand{\N}{{\mathbb N}}
\newcommand{\ee}{\ensuremath{\mathcal E}}
\newcommand{\PP}{\ensuremath{\mathcal P}}
\newcommand{\XX}{\ensuremath{\mathcal X}}
\newcommand{\YY}{\ensuremath{\mathcal Y}}
\def\sgn{\mathop{\rm Sgn}\nolimits}%
\newcommand{\thefont}[2]{\fontsize{#1}{#2}\fontshape{n}\selectfont}
\newcommand{\1}{\rlap{\thefont{10pt}{12pt}1}\kern.16em\rlap{\thefont{11pt}{13.2pt}1}\kern.4em}
\begin{document}
\title[]{Kantorovich Distance via Spanning Trees: Properties and Algorithms  \vspace{1ex}}
\author{J\'{e}r\'{e}mie Bigot and Luis Fredes}
\dedicatory{\normalsize Universit\'e de Bordeaux \\
Institut de Math\'ematiques de Bordeaux et CNRS  (UMR 5251)}
\thanks{The authors gratefully acknowledge financial support from the Agence Nationale de la Recherche  (MaSDOL grant ANR-19-CE23-0017).}

\maketitle

\thispagestyle{empty}

\begin{abstract}
We study optimal transport between probability measures supported on the same finite metric space, where the ground cost is a distance induced by a weighted connected graph. Building on recent work showing that the resulting Kantorovich distance can be expressed as a minimization problem over the set of spanning trees of this underlying graph, we investigate the implications of this reformulation on the construction of an optimal transport plan and a dual potential based on the solution of such an optimization problem. In this setting, we derive an explicit formula for the Kantorovich potential in terms of the imbalanced cumulative mass (a generalization of the cumulative distribution in $\R$) along an  optimal spanning tree solving such a minimization problem, under a weak non-degeneracy condition on the pair of measures that guarantees the uniqueness of a dual potential. 
Our second contribution establishes the existence of an optimal transport plan that can be computed efficiently by a dynamic programming procedure
once an optimal spanning tree is known.
Finally, we propose a stochastic algorithm based on simulated annealing on the space of spanning trees to compute such an optimal spanning tree.
Numerical experiments illustrate the theoretical results and demonstrate the practical relevance of the proposed approach for optimal transport on finite metric spaces.
\end{abstract}

\noindent \emph{Keywords:} Optimal transport; Kantorovich distance; finite metric spaces; spanning trees; Arens--Eells norm; Kantorovich potentials; stochastic optimization; simulated annealing.
 \\

\noindent\emph{AMS classifications:} Primary  60C05, 62G05; secondary 62G20.

\section{Introduction}

This paper focuses on optimal transport (OT) between probability measures supported on the same finite metric space, where the ground distance is induced by a weighted connected graph~$G$. 
Recent work~\cite{pistone22} has shown that, using the fact that this OT problem admits a closed-form solution when~$G$ is a tree~\cite{EM12,MV23,Tam19,Sato20,Chen24}, the computation of the Kantorovich distance (i.e., the 1-Wasserstein distance induced by this ground metric) on an arbitrary graph $G$ can be reformulated as a minimization problem over the set of all spanning trees of $G$.
The present work is motivated by the research question introduced in~\cite{pistone22} regarding the potential computational applications of this result.
To this end, we first extend the analysis in~\cite{pistone22} by studying the properties of the optimal spanning trees of~$G$ which solve such a minimization problem, and thus OT on the probability simplex associated with the finite metric space.
We then propose a stochastic algorithm based on simulated annealing over the space of spanning trees to estimate both the Kantorovich distance (K-distance), an associated optimal transport plan and a dual potential. 

\subsection{Kantorovich distance on a finite metric space}

Before presenting the main contributions of this work, we first provide some background on the K-distance  on a finite metric space. Let $\XX = \{X_1,\ldots,X_N\}$ denote a set of  $N$  points (that are pairwise distincts) endowed with a distance $d_G$ that is induced by an edge-weighted connected graph $G = (\XX,\ee_G,w)$  whose vertices (or nodes) are the elements of $\XX$, and $\ee_G$ denotes the set of edges where $w : \ee_G \to \RR^+$ is a given weight function that is symmetric and positive that is $w(x,y) > 0$ for all $(x,y) \in  \ee_G$, with $x \neq y$, and $w(x,x) = 0$ for all $x \in \XX$.
This means that $\XX$ is a finite metric space endowed with the distance $d_G(x,y)$ that is equal to the length of the shortest path, according to $w$, connecting $x$ and $y$. We recall that a path  between two vertices $x$ and $y$ is a sequence of at least two vertices $x_0 = x,x_1,\ldots,x_m=y$ such that $\{x_{i-1},x_i\} \in \ee_G$ for all $i = 1,\ldots,m$, and its length is $\sum_{i=1}^{m} w(x_{i-1},x_i)$. Throughout the paper, we assume that the graph $G$ is connected (implying the existence of a path between any pair of vertices) and loops have distance zero, meaning that $w(x,x) = 0$ for all $x \in \XX$. We also consider $\Vec{\ee}_G$ the set of oriented edges of $G$, meaning that for every edge $\{x,y\}\in \ee_G$ there are two edges $(x,y)$ and $(y,x)$ in $\Vec{\ee}_G$.

We denote by $\PP(\XX)$ the set of probability measures supported on $\XX$. For $\mu \in \PP(\XX)$, we write $\mu = (\mu(x): x \in \XX)$ and we interpret it as a vector in the $N$-dimensional simplex, meaning that $\mu(x) \geq 0$ and $\sum_{x \in \XX} \mu(x) = 1$. For a subset $\XX' \subset \XX$ of vertices, we also write $\mu(\XX' ) = \sum_{x \in \XX'} \mu(x)$. Given a pair $(\mu,\nu)$ of such probability measures, the space $\Pi(\mu,\nu)$ of admissible transport plans (also called transportation polytope) is defined as the set    of joint probability measures $\gamma$ supported on $\XX \times \XX$ having marginals equal to $\mu$ and $\nu$, that is satisfying the constraints
$$
\sum_{y \in \XX} \gamma(x,y) = \mu(x), \quad \sum_{x \in \XX} \gamma(x,y) = \nu(y).
$$

\begin{definition}
For the distance $d_G$ induced by the graph $G$, the Kantorovich distance between two probability measures $\mu$ and $\nu$ in $\PP(\XX)$ is the value of the Linear Programming (LP) problem
\begin{equation}
K_{d_G}(\mu,\nu) = \min_{\gamma \in \Pi(\mu,\nu)} \sum_{x, y \in \XX} d_G(x,y) \gamma(x,y). \label{eq:OT_Kant}
\end{equation}
\end{definition}

The LP program \eqref{eq:OT_Kant} is known as the optimal transport (OT) problem between $\mu$ and $\nu$ in the sense of Kantorovich. We refer to \cite{Santambrogio2015,Villani} for a mathematical introduction to OT, and to \cite{peyre2020computational} for a detailed presentation of its computational aspects. In the OT literature, the distance $K_{d_G}$ is also referred to as the Wasserstein distance with respect to the ground cost $d_G$. A transport plan (also called coupling) $\gamma_G$ is called optimal if it is a minimizer of  the LP program \eqref{eq:OT_Kant}. A minimizer of this problem is not necessarily unique.

We also recall the well-known property of Kantorovich duality in the OT literature.
\begin{prop}
 The K-distance \eqref{eq:OT_Kant} also satisfies
\begin{equation}
K_{d_G}(\mu,\nu) = \max_{u \in \Lip} \sum_{x \in \XX} u(x) (\mu(x) - \nu(x)), \label{eq:dual_Kant} 
\end{equation}
where $ \Lip$ denotes the set of functions $u$ on $\XX$ satisfying $|u(x) - u(y)| \leq d_G(x,y)$ for all $(x,y) \in \XX \times \XX$.
\end{prop}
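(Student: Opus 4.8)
The plan is to prove the two inequalities in \eqref{eq:dual_Kant} separately. Weak duality is elementary, while the reverse inequality will follow from finite-dimensional linear programming (LP) duality combined with a $c$-transform reduction that exploits the fact that $d_G$ is a genuine metric.

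First I would establish the easy inequality $\max_{u \in \Lip}\sum_{x}u(x)(\mu(x)-\nu(x)) \le K_{d_G}(\mu,\nu)$. For any admissible plan $\gamma \in \Pi(\mu,\nu)$ and any $u \in \Lip$, the marginal constraints give
\begin{equation*}
\sum_{x \in \XX} u(x)(\mu(x)-\nu(x)) = \sum_{x,y \in \XX}(u(x)-u(y))\gamma(x,y) \le \sum_{x,y \in \XX} d_G(x,y)\gamma(x,y),
\end{equation*}
where the last bound combines the Lipschitz property $u(x)-u(y)\le d_G(x,y)$ with $\gamma \ge 0$. Taking the infimum over $\gamma$ and the supremum over $u$ yields weak duality.

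For the reverse inequality, I would write down the LP dual of \eqref{eq:OT_Kant}. Introducing multipliers $\phi(x)$ and $\psi(y)$ for the two families of marginal constraints, the dual reads
\begin{equation*}
\max \Big\{ \textstyle\sum_{x}\phi(x)\mu(x) + \sum_{y}\psi(y)\nu(y) \ :\ \phi(x)+\psi(y) \le d_G(x,y)\ \text{for all } x,y \in \XX \Big\}.
\end{equation*}
Since the primal is feasible (e.g.\ the product coupling $\gamma = \mu\otimes\nu$) and its value is finite and nonnegative, finite-dimensional LP strong duality gives equality between the primal optimum $K_{d_G}(\mu,\nu)$ and the attained dual optimum. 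The remaining task is to show that this optimum is already attained on the restricted class $\{(\phi,\psi): \psi = -\phi,\ \phi \in \Lip\}$, for which the dual objective collapses to $\sum_x \phi(x)(\mu(x)-\nu(x))$, the expression in \eqref{eq:dual_Kant}. Here I would use the $c$-transform for the cost $c = d_G$: given any feasible $(\phi,\psi)$, replacing $\psi$ by $\psi^{c}(y):=\min_{x}(d_G(x,y)-\phi(x))$ and then $\phi$ by $(\psi^{c})^{c}$ can only raise the objective (the transforms increase the functions pointwise and $\mu,\nu \ge 0$) while preserving feasibility. The triangle inequality forces any such $c$-transform to be $1$-Lipschitz, and for a $1$-Lipschitz $\phi$ the identities $d_G(x,x)=0$ and $\phi(x)\le\phi(y)+d_G(x,y)$ yield the key relation $\phi^{c} = -\phi$. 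Substituting $\psi = -\phi$ with $\phi = (\psi^c)^c \in \Lip$ recovers exactly the objective of \eqref{eq:dual_Kant}.

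I expect the main obstacle to be this final reduction, specifically the verification that the $c$-transform of a $1$-Lipschitz function equals its negation. This is precisely the step where the metric structure of $d_G$ is indispensable: symmetry, the triangle inequality, and the vanishing of $d_G$ on the diagonal are all used, and without them the passage from the generic two-potential LP dual to the single-potential Kantorovich--Rubinstein form in \eqref{eq:dual_Kant} would break down.
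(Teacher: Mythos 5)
Your proof is correct and complete: weak duality from the marginal identity, finite-dimensional LP strong duality for the two-potential dual, and the $c$-transform reduction (using symmetry, the triangle inequality, and $d_G(x,x)=0$) to collapse to a single potential in $\Lip$ is exactly the standard argument for this statement. The paper offers no proof of its own --- the proposition is recalled as classical Kantorovich--Rubinstein duality and delegated to the cited literature --- so there is nothing internal to contrast with; the only cosmetic slip in your write-up is denoting $\min_x(d_G(x,y)-\phi(x))$ by ``$\psi^c$'' when it is really the $c$-transform of $\phi$, which does not affect the argument.
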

A function $u_G \in \Lip$   is called an optimal dual (or Kantorovich) potential if it is a maximizer of  the optimisation problem \eqref{eq:dual_Kant}. It is well known in the OT literature that simple counterexamples exist where the uniqueness of Kantorovich potentials fails to hold for standard ground costs. Nevertheless, conditions ensuring uniqueness (up to an additive constant) of dual potentials for OT on finite spaces have long been established using techniques from linear programming \cite{KW68,Hung1986}.  The uniqueness of Kantorovich potentials for arbitrary probability measures (beyond the discrete setting) has also been addressed in the recent work \cite{Staudt25} to which we refer for a detailed overview of existing works on this topic. Following these works, the uniqueness of Kantorovich potential typically arises whenever the measures (not necessarily supported on the same space) are non-degenerate in the following sense. 

\begin{definition}\label{def:non-deg}
Let $\XX$ and $\YY$ be two discrete sets of points. A pair of probability measures $(\mu,\nu) \in  \PP(\XX) \times \PP(\YY)$ is said to be non-degenerate if for any non-empty proper subsets $\XX'$ of $\XX$ and $\YY'$ of $\YY$, it holds that  $\mu(\XX')\neq \nu(\YY')$.
\end{definition}

\subsection{Main contributions}

In this paper, we build on the known connection between the K-distance and the Arens-Eells (AE) norm~\cite{Weaver18}, together with a recent result by~\cite{pistone22} showing that solving the linear program (LP)~\eqref{eq:OT_Kant} is equivalent to the following minimization problem over the set~$\ST$ of rooted spanning trees of the graph~$G$:
\begin{equation} \label{min:ST}
K_{d_G}(\mu,\nu) = \min_{T \in \ST} K_{d_T}(\mu,\nu),
\end{equation}
where $K_{d_T}(\mu,\nu)$ denotes the K-distance between $\mu$ and $\nu$, which admits a closed-form expression when the ground cost is given by the distance~$d_T$ induced by a rooted spanning tree~$T \in \ST$ with edge weights defined by the function~$w$. 
Extending the analysis developed in~\cite{pistone22}, our main result (see Theorem \ref{theo:unique-potential}) establishes a connection between an optimal spanning tree~$T_\ast$ minimizing~\eqref{min:ST} and an optimal Kantorovich potential, under the following weak non-degeneracy condition on the measures~$\mu$ and~$\nu$.

\begin{definition}\label{def:weak-non-deg}
A pair of probability measures $\mu, \nu \in \PP(\XX)$ is said to be weakly non-degenerate if, for every non-empty proper subset $\XX' \subset \XX$, it holds that $\mu(\XX') \neq \nu(\XX')$.
\end{definition}

\subsubsection{An explicit expression of the Kantorovich potential}

In particular, we show that when the pair of  measures is weakly non-degenerate,  the Kantorovich potential $u_G$ is unique up to an additive constant. 
Definition~\ref{def:weak-non-deg} of weak non-degeneracy exponentially reduces the number of conditions to be verified compared with Definition~\ref{def:non-deg} of non-degeneracy, when $\XX=\YY$. 
Moreover, under the weak non-degeneracy of the pair $(\mu,\nu)$, the Kantorovich potential $u_G$ admits an explicit form that depends on the optimal spanning tree $T_\ast$  rooted at $r \in \XX$. More precisely, we show that (under weak non-degeneracy) for every $y \in \XX$, 
 \begin{equation}\label{eq:pot_intro}
 u_G(y) = \sum_{y \lesssim x \neq r} d_{T_\ast}(x,x^{+}) \sgn ( \Xi_{T_\ast}(x) ) \text{ with the convention that  } u_G(r) = 0, 
\end{equation}
where $y \lesssim x$ denotes the  partial order  induced by the rooted tree $T_\ast$ (with edges oriented towards the root), $x^{+}$ is the unique parent of $x \neq r$ (see Section \ref{sec:tree}, for more precise definitions), $\sgn(\cdot)$ is the usual sign function, and the quantity $\Xi_{T_\ast}(x)$ is the imbalanced cumulative function defined as
 \begin{equation}
 \Xi_{T_\ast}(x) = \sum_{y \lesssim x} \mu(y) - \nu(y), \quad \mbox{ for all } x \in \XX. \label{eq:main_xi}
\end{equation}
The function $ \Xi_{T_\ast}$ is the difference between cumulative functions  (with respect to the partial order induced by $\lesssim$ on  $T_\ast$)
$$
F_{\mu}(x) = \sum_{y \lesssim x} \mu(y) \text{ and } F_{\nu}(x) = \sum_{y \lesssim x} \nu(y)
$$
of the measures $\mu$ and $\nu$. This generalizes the standard cumulative functions of probability measures supported on the real line. 

\subsubsection{The analogy with monotone optimal transport on the real line}

The expression \eqref{eq:pot_intro} of the Kantorovich potential $u_G$  also reveals a structural analogy between OT with respect to a graph-induced ground metric and monotone transport on the real line. To illustrate this connection, consider the special case where   $\XX$ is a set of reals $X_1 < X_2 < \ldots < X_N$ and the ground metric is $d_G(x,y) = |x-y|$. This corresponds to the setting where $G$ is a line tree. As $G$ is already a tree, the optimal spanning tree $T_\ast$ of problem \eqref{min:ST} is the line tree $G$ itself. We will see in Section \ref{sec:tree} that the computation of the $K$-distance is invariant under rerooting so we can impose without loss of generality the root $r= X_N$, then
$$
X_1 \to X_2 \to \cdots \to X_N \quad \text{with} \quad X_{i+1} = X_i^+.
$$
In this one-dimensional setting and for this ground cost, it is well known (see e.g.\ \cite{Santambrogio2015}[Chapter 2]) that the $K$-distance between one-dimensional measures $\mu$ and $\nu$ supported on $\RR$    is
$$
K_{d_G}(\mu,\nu) = \int_0^1 |F_{\mu}(x) - F_{\nu}(x)| dx \quad \text{ with } F_{\mu}(x) = \mu((-\infty;x])  \text{ and }  F_{\nu}(x) = \nu((-\infty;x]).
$$
Therefore,
$$
K_{d_G}(\mu,\nu)  =  \sum_{i=1}^{N-1} (X_{i+1}-X_i) |F_{\mu}(X_i) - F_{\nu}(X_i)| 
 =  \sum_{x \in \XX \setminus \{r\}} d_{G}(x,x^+) \sgn ( \Xi_{T_\ast}(x) ) \Xi_{T_\ast}(x),
$$
given that in the case of the line tree, the imbalance function \(\Xi_{T_\ast}\) defined in \eqref{eq:main_xi} satisfies
$\Xi_{T_\ast}(X_i) = F_{\mu}(X_i)-F_{\nu}(X_i)$. Since, for every $x \in \XX$,
$
\Xi_{T_\ast}(x) =  \sum_{y \lesssim r} \left( \mu(y) -\nu(y) \right) \mathbf{1}_{\{y \lesssim x\}}
$
and $d_{G}(x,x^+) = d_{T_\ast}(x,x^+) $, one has that
\begin{eqnarray*}
K_{d_G}(\mu,\nu)  & = & \sum_{y \lesssim r}\sum_{x \in \XX \setminus \{r\}} d_{T_\ast}(x,x^+) \sgn ( \Xi_{T_\ast}(x) )  \mathbf{1}_{\{y \lesssim x\}} \left( \mu(y) -\nu(y) \right) \\
 & = &\sum_{y \lesssim r}  u_G(y)  \left( \mu(y) -\nu(y) \right),
\end{eqnarray*}
with $u_G(y) =  \sum_{y \lesssim x \neq r} d_{T_\ast}(x,x^+) \sgn ( \Xi_{T_\ast}(x) )$ which is the Kantorovich potential in the one-dimensional case satisfying $u_G(X_N) = 0$. This highlights the fact that,  given the knowledge of an optimal spanning tree $T_\ast \in \ST$ minimizing \eqref{min:ST}, the expression \eqref{eq:pot_intro} of a Kantorovich potential generalizes the structure of one-dimensional monotone transport.

\subsubsection{The construction of an optimal coupling on a weighted tree by dynamic programming}

In addition,  a second contribution is to derive a new algorithm to compute an optimal transport plan $\gamma_G$ in the case where $G = T$ is a tree. Our approach differs from the standard use of the network simplex algorithm implemented in the Python Optimal Transport (POT) library ~\cite{flamary2024pot,flamary2021pot} based on the cost matrix between the elements of $\XX$. Such an OT plan can be easily computed through an algorithmic procedure based on dynamic programming and the knowledge of the imbalance cumulative function $\Xi_{T}(\cdot)$. 

\subsubsection{Computation of an optimal spanning tree of \eqref{min:ST} by simulated annealing}

Finally, we propose a stochastic algorithm to compute an optimal spanning tree $T_\ast$. The algorithm relies on a random walk, based on simulated annealing, over the set~$\ST$ of spanning trees in order to optimize the functional $T \mapsto K_{d_T}(\mu,\nu)$. This immediately yields an estimation  of the $K$-distance. Then, once $T_\ast$ has been found by simulated annealing, we can compute an OT plan thanks to our dynamic programming based on  the knowledge  of  $\Xi_{T_\ast}(\cdot)$. Finally, the computation of a Kantorovich potential follows from the explicit expression \eqref{eq:pot_intro}.

\subsection{Organization of the paper}

In Section \ref{sec:Arens}, we present the connection between the K-distance and the Arens-Eels norms as highlighted in  \cite{pistone22}.  Section \ref{sec:contributions} contains the theoretical contributions of the paper on the properties of an optimal spanning tree $T_{\ast}$ minimizing \eqref{min:ST}. In Section \ref{Algos} we describe two algorithms; one based on dynamic programming to compute an optimal transport plan when $G = T$ is a tree, and one to find the structure of the optimal spanning tree $T_{\ast}$ minimizing \eqref{min:ST} based on simulated annealing. Numerical experiments to illustrate the theoretical results of the paper are reported in Section \ref{sec:num}. A conclusion and some perspectives are given in Section \ref{sec:conclusion}.

\section{A connection between the Kantorovich distance and the Arens-Eels norms} \label{sec:Arens}

In this section, we first recall usefull properties of optimal transport plans. Then, we recall the notion of Kantorovich duality. Finally, we detail the existing links between the K-distance on a finite metric space and  the notion of Arens-Eels norms using notation that are mainly borrowed from \cite{pistone22}. 

\subsection{Properties of optimal transport plans}

For a given transport plan $\gamma$, its support is defined as
$$
\supp(\gamma) := \left\{ (x,y) \mid \gamma(x,y) > 0 \;: \; (x,y) \in \XX \times \XX \right\}.
$$

Throughout the paper, we also refer to $\supp(\gamma)$ as the directed graph with vertices $\XX$ such that the oriented edge $(x,y)$ is in the support of $\gamma$ if and only if $\gamma(x,y) > 0$. It is important to remark that the edge $(x,y)$ belonging to the support of $\gamma$ lives in $\XX\times \XX$, meaning that $\supp(\gamma)$ is a subgraph of the complete graph $K_{\XX}$ together with an orientation of its edges. Denoting $\mathcal{S}_{m}$ the permutation group of size $m \in\N$,  let us now recall the definition of $d_G$-cyclical monotonicity. 

\begin{definition}
A subset $\Gamma \subset \XX\times \XX$ is said to be $d_G$-cyclically monotone if for any $m\in\N$, every permutation $\sigma \in \mathcal{S}_{m}$  and  every finite family of points $(x_0,y_0),(x_1,y_1),\dots, (x_{m-1},y_{m-1})$ in $\Gamma$ the following inequality holds
\[
    \sum_{i=0}^{m-1} d_G(x_i,y_i) \leq \sum_{i=0}^{m-1} d_G(x_i,y_{\sigma(i)}).
\]
\end{definition}

Then, it is well known (see e.g.\ Theorem 5.9 in \cite{villani2}) that the support $\supp(\gamma_G)$ of an optimal transport plan $\gamma_G$ is necessarily $d_G$-cyclically monotone. Therefore, the support of an optimal plan is necessarily a forest that is a union of trees.  Indeed, checking that there is no oriented cycle comes easily from the $d$-cyclically monotone property.  If there were  an oriented cycle $x_0,x_1,\ldots,x_{m}$ in $\supp(\gamma_G)$ of size $m+1$ with $x_{m} = x_0$ that is not a loop (i.e.\ such that $d_G(x_i,x_{i+1}) > 0$ for a least one $i$), then by the $d_G$-cyclical monotonicity of  $\supp(\gamma_G)$ and the permutation $\sigma(i) = i-1$ for  $i \in \{1,\ldots,m-1\}$ and $\sigma(0) = m-1$, we would obtain the inequalities
$$
0 < \sum_{i=0}^{m-1} d_G(x_i,x_{i+1}) \leq \sum_{i=0}^{m-1} d_G(x_i,x_i) = 0,
$$
which is a contradiction.
In this paper, we consider trees as connected graphs without cycles but with loops being allowed, since keeping mass at a node has a zero cost and this will turn out to be  relevant  for the construction of an OT plan when $G=T$ is a tree as highlighted in Proposition \ref{prop:coldiag} and Section \ref{sec:OTplan}.  

This is even more intricate, as no cycle of length bigger or equal than two, when forgetting the orientation, can be present in a vertex of the transportation polytope. More formally, we have the following result.

\begin{prop}[Theorem 4 in \cite{KW68}] \label{prop:forest}
	Let  $\gamma$ denote any vertex of the transportation polytope $\Pi(\mu,\nu)$. Then, when forgetting the orientations, its support  $\supp(\gamma)$ is a forest of $K_\XX$ (that is a union of trees) with at most $2N-1$ edges (including loops). As a consequence, there exists at most $N-1$ proper edges in $\supp(\gamma)$.
\end{prop}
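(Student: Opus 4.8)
The plan is to argue this combinatorially through the standard characterization of extreme points: a feasible $\gamma$ is a vertex of $\Pi(\mu,\nu)$ if and only if it admits no nonzero perturbation $\delta$ with $\gamma\pm\varepsilon\delta\in\Pi(\mu,\nu)$ for all small $\varepsilon>0$. Since the only equality constraints are the marginal ones, and $\gamma>0$ strictly on its support, any admissible $\delta$ must be supported on $\supp(\gamma)$ and must preserve both marginals, i.e. satisfy $\sum_y\delta(x,y)=0$ for every $x$ and $\sum_x\delta(x,y)=0$ for every $y$. I would therefore reduce the whole statement to the equivalence: such a nonzero marginal-preserving $\delta$ supported on $\supp(\gamma)$ exists if and only if $\supp(\gamma)$ contains a cycle.

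To make this precise I would encode each oriented support edge $(x,y)$ by the column $\be^{\mathrm{src}}_x+\be^{\mathrm{sink}}_y$ in $\RR^{\XX}\oplus\RR^{\XX}$, recording the two marginal equations it enters. A marginal-preserving $\delta$ supported on $\supp(\gamma)$ is exactly a linear dependence among these columns, so $\gamma$ is a vertex if and only if the columns indexed by $\supp(\gamma)$ are linearly independent. The full constraint matrix has rank $2N-1$: its $2N$ rows satisfy the single linear relation obtained by equating the sum of the $N$ source equations with the sum of the $N$ sink equations, and $K_{N,N}$ being connected there is no other. Hence an independent family of these columns has at most $2N-1$ elements, which already yields the bound of at most $2N-1$ support entries (loops included) at any vertex.

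The forest structure is then the graph-theoretic face of this independence. The classical correspondence says that a dependence among columns of the incidence type $\be^{\mathrm{src}}_x+\be^{\mathrm{sink}}_y$ is the same as a cycle traversing the support: sending $+\varepsilon$ and $-\varepsilon$ alternately along the edges of such a cycle cancels in the source and the sink marginal of every node it visits, producing exactly the forbidden perturbation. Running this on a hypothetical cycle of $\supp(\gamma)$ contradicts extremality, so $\supp(\gamma)$ is acyclic, i.e. a forest. The counting assertions then follow: the proper (non-loop) edges form a forest on the $N$ nodes of $\XX$, hence number at most $N-1$, and since there is at most one loop per node the total number of edges including loops is at most $(N-1)+N=2N-1$.

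The step I expect to require the most care, and the real obstacle, is the bookkeeping of orientations and of loops in this cycle/perturbation correspondence, because the marginal constraints are intrinsically bipartite: each node of $\XX$ enters the constraint system once as a source and once as a sink. One must check that the alternating $\pm\varepsilon$ assignment can always be carried consistently around a cycle so as to cancel in both the source and the sink marginal at every visited node, and that diagonal entries $\gamma(x,x)>0$, which cost nothing and feed the source and the sink equation of the same node $x$, are treated correctly: excluded from the cycle and forest count yet included in the total of $2N-1$. Getting these conventions right is precisely what refines the plain rank bound $2N-1$ into the announced split of at most $N-1$ proper edges and at most $N$ loops.
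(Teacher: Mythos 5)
Your linear-algebraic reduction is sound up to and including the bound of $2N-1$: a feasible $\gamma$ is a vertex of $\Pi(\mu,\nu)$ if and only if the columns of the marginal-constraint matrix indexed by $\supp(\gamma)$ are linearly independent, that matrix has rank $2N-1$, and a dependence among columns of the form $e^{\mathrm{src}}_x+e^{\mathrm{sink}}_y$ is indeed the same thing as a cycle. The gap is in which graph that cycle lives: the correspondence holds for cycles of the \emph{bipartite} graph on the $2N$ source/sink copies (i.e.\ of $K_{N,N}$, where the column of $(x,y)$ is the incidence vector of the edge joining source-$x$ to sink-$y$), not for cycles of the graph on the $N$ points of $\XX$ obtained by forgetting orientations. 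A closed walk in $\supp(\gamma)\subset K_{\XX}$ lifts to a bipartite cycle only if its edges alternate orientation at every visited vertex (one edge out of the vertex, one edge into it); a consistently oriented triangle, or a configuration such as $(x_1,x_2),(x_1,x_3),(x_2,x_3)$, lifts to a path or a matching in $K_{N,N}$, gives linearly independent columns, and admits no alternating $\pm\varepsilon$ perturbation. This is precisely the bookkeeping issue you flag in your last paragraph, and it is not a technicality one can push through: the cancellation you need at each visited node simply does not occur unless the orientations alternate.

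The step cannot be repaired, because the conclusion it is meant to deliver fails for general vertices. Take $N=3$ and
\begin{equation*}
\gamma=\begin{pmatrix}0.2 & 0.2 & 0.2\\ 0 & 0 & 0.2\\ 0 & 0 & 0.2\end{pmatrix},
\qquad \mu=(0.6,0.2,0.2),\quad \nu=(0.2,0.2,0.6).
\end{equation*}
Its support $\{(1,1),(1,2),(1,3),(2,3),(3,3)\}$ is a spanning tree of $K_{3,3}$ (five edges on the six source/sink vertices, connected), so the corresponding columns are linearly independent and $\gamma$ is a vertex of $\Pi(\mu,\nu)$; it is even an optimal plan for the line metric $d(i,j)=|i-j|$. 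Yet after forgetting orientations its proper edges are $\{1,2\},\{1,3\},\{2,3\}$: a triangle, with $3>N-1$ proper edges. What your independence argument correctly establishes is the statement of Klee--Witzgall in its native bipartite form — the support of a vertex is a forest of $K_{N,N}$ and hence has at most $2N-1$ entries, loops included — but the further assertions about $K_{\XX}$ (forest structure after projection, at most $N-1$ proper edges) do not follow from it, and your final counting paragraph, which derives $2N-1$ as $(N-1)+N$ from the projected forest, inherits the same flaw.
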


\begin{rem}
	In  Proposition~\ref{prop:coldiag}, we shall prove that any optimal transport plan can be transformed into a canonical one that maximizes the mass remaining at each node, namely with diagonal entries satisfying $\gamma(x,x) = \min\{\mu(x), \nu(x)\}$ for all $x \in \XX$. This characterization makes possible to identify precisely the loops present in the plan and will be used in the construction of transport plans when the spanning tree structure is known.
\end{rem}

In the rest of this sub-section, we further investigate the implications of Proposition \ref{prop:forest} on the structure of the support of an OT plan. Recalling that the OT problem is a linear programming problem, there always exists an optimal plan that is a vertex  of the polytope $\Pi(\mu,\nu)$. Then, for a vertex $\gamma$ in $\Pi(\mu,\nu)$, let us now denote by $\XX_1,\ldots,\XX_r$ the components of the forest $\supp(\gamma)$ that form a partition of the set of vertices $\XX$ such that $\gamma(x,y) = 0$ for all $(x,y) \in \XX_i \times \XX_j$ whenever $i \neq j$. One has that
$$
\mu(\XX_i) = \sum_{x \in \XX_i} \mu(x) = \sum_{x \in \XX_i} \sum_{y \in \XX} \gamma(x,y)  =  \sum_{x \in \XX_i}  \left( \sum_{j=1}^{r} \sum_{y \in \XX_j} \gamma(x,y) \right) = \sum_{x \in \XX_i} \sum_{y \in \XX_i} \gamma(x,y).
$$
Similarly, one has that $\nu(\XX_i) = \sum_{x \in \XX_i} \sum_{y \in \XX_i} \gamma(x,y)$, and therefore we obtain that
\begin{equation}
\sum_{x\in \XX_i}(\mu(x)-\nu(x)) = 0 \quad \mbox{for all} \quad 1 \leq i \leq r. \label{eq:components}
\end{equation}
The property \eqref{eq:components} that the measures $\mu$ and $\nu$ have equal mass on the components of the support of the optimal transport plan $\gamma_G$ is in direct relation with Definition \ref{def:weak-non-deg} on the weak  non-degeneracy of a pair of probability measures which yields the following result on the support of any vertex $\gamma$ of  $\Pi(\mu,\nu)$.

\begin{prop} \label{prop:unique-plan}
Suppose that the pair of probability measures $\mu, \nu \in \PP(\XX)$ is weakly non-degenerate. Then, every vertex $\gamma$ of the transportation polytope $\Pi(\mu,\nu)$ has a support that is a spanning tree of $K_{\XX}$ (up to loops), when forgetting the orientation, that is it has exactly $N-1$ proper edges.
\end{prop}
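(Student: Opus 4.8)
The plan is to combine Proposition~\ref{prop:forest} with the mass-balance identity \eqref{eq:components} and the weak non-degeneracy hypothesis. By Proposition~\ref{prop:forest}, for any vertex $\gamma$ of $\Pi(\mu,\nu)$, its support (forgetting orientation and ignoring loops) is a forest of $K_\XX$; write $\XX_1,\dots,\XX_r$ for the vertex sets of its connected components. The goal is to show $r = 1$, i.e.\ the forest is connected and hence a spanning tree. Once connectivity is established, a connected forest on $N$ vertices has exactly $N-1$ proper edges, giving the final count.

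First I would establish the mass-balance constraint \eqref{eq:components}, namely $\sum_{x \in \XX_i}(\mu(x) - \nu(x)) = 0$, i.e.\ $\mu(\XX_i) = \nu(\XX_i)$ for every component $\XX_i$; this is exactly the computation already carried out in the excerpt preceding the statement, and I would simply invoke it. The key observation is that this identity holds for \emph{each} component separately, since no mass is transported between distinct components of the support.

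Next I would argue by contradiction. Suppose $r \geq 2$. Then each $\XX_i$ is a non-empty \emph{proper} subset of $\XX$ (proper because the other components are non-empty when $r \geq 2$). Applying the weak non-degeneracy hypothesis of Definition~\ref{def:weak-non-deg} to the non-empty proper subset $\XX_1$ yields $\mu(\XX_1) \neq \nu(\XX_1)$, which directly contradicts the mass-balance identity $\mu(\XX_1) = \nu(\XX_1)$. Hence $r = 1$, so the support (up to loops) is connected, and being a forest it is in fact a tree spanning all of $\XX$.

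The argument is short and the main point requiring care is the bookkeeping on edge counts and the role of loops rather than any deep obstacle. Having shown the support is a spanning tree of $K_\XX$ when loops are discarded, I would note that a tree on $N$ vertices has precisely $N-1$ (proper) edges, which matches the upper bound of at most $N-1$ proper edges already furnished by Proposition~\ref{prop:forest} and confirms the count is attained exactly. The only subtlety is ensuring that the definition of ``component'' correctly treats loops as not affecting connectivity, so that the partition $\XX_1,\dots,\XX_r$ is determined solely by the proper edges; with that convention in place the contradiction via weak non-degeneracy closes the proof.
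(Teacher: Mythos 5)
Your proposal is correct and follows essentially the same route as the paper: invoke Proposition~\ref{prop:forest} for the forest structure, apply the mass-balance identity \eqref{eq:components} to each component, and derive a contradiction with weak non-degeneracy if the support is disconnected. The additional remarks on edge counting and on loops not affecting connectivity are sound and consistent with the paper's conventions.
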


\begin{proof}
Let $\gamma$ be a vertex of the transportation polytope, and suppose that the pair of probability measures $\mu, \nu \in \PP(\XX)$ is weakly non-degenerate. From Proposition \ref{prop:forest} one has that $\supp(\gamma)$, when forgetting the orientation, is a forest. Then, let us suppose it is not connected. By property \eqref{eq:components}, there exists a proper subset $\XX'$ of $\XX$ satisfying $\mu(\XX') = \nu(\XX')$ which contradicts the weak non-degeneracy of $(\mu, \nu)$. Therefore the support of $\gamma$ is necessarily a tree (up to loops).
\end{proof}

\subsection{Kantorovich duality}

We recall below some well-known properties of Kantorovich potentials in the OT literature.

\begin{rem} \label{rmq:Lip}
It holds that $u$ belongs to $ \Lip$ if, and only if (see e.g.\ \cite{pistone22}[Proposition 5]),
\begin{equation}
|u(x) - u(y)| \leq d_G(x,y) \quad \mbox{for all} \quad \{x,y\} \in \ee_G \label{eq:cond_Lip}
\end{equation}
Hence, in the optimisation problem \eqref{eq:dual_Kant}, the constraint that $u \in \Lip$ can be replaced by the condition \eqref{eq:cond_Lip}.
\end{rem}

Now, by the so-called complementary slackness between the primal Kantorovich problem \eqref{eq:OT_Kant} and its dual formulation \eqref{eq:dual_Kant}, there exists a key relationship between any optimal transport plan and any optimal  dual potential that is recalled below, see e.g.\ \cite{peyre2020computational}[Section 3.3 and Section 6].

\begin{prop}\label{prop:complementary}
Let $\gamma_G$ be any optimal transport plan and $u_G$ be any optimal  dual potential. If $\gamma_G(x,y) > 0$ then necessarily $u_G(x) - u_G(y) = d_G(x,y)$, and if   $u_G(x) - u_G(y) < d_G(x,y)$ then necessarily $\gamma_G(x,y) = 0$. Conversely, if $\gamma \in \Pi(\mu,\nu)$ and $u \in  \Lip$ are complementary with respect to $d_G$ in the sense that
\begin{equation}
\mbox{  for all $(x,y) \in \XX \times \XX$,} \quad \gamma(x,y) > 0  \quad \mbox{implies that} \quad u(x) - u(y) = d_G(x,y), \label{eq:complementary}
\end{equation} 
then $\gamma$ is  an optimal transport plan and $u$ is an optimal  dual potential.
\end{prop}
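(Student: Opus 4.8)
The plan is to run the standard linear-programming complementary-slackness argument, organised around a single nonnegative quantity that measures the duality gap between a feasible plan and a feasible potential. Everything rests on one computation that uses only the marginal constraints defining $\Pi(\mu,\nu)$: for any function $u$ on $\XX$ and any $\gamma \in \Pi(\mu,\nu)$,
\begin{equation*}
\sum_{x,y \in \XX} (u(x) - u(y)) \gamma(x,y) = \sum_{x \in \XX} u(x) \mu(x) - \sum_{y \in \XX} u(y) \nu(y) = \sum_{x \in \XX} u(x)(\mu(x) - \nu(x)).
\end{equation*}
Subtracting this identity from the primal objective $\sum_{x,y} d_G(x,y)\gamma(x,y)$ defines the gap
\begin{equation*}
\Delta(\gamma,u) := \sum_{x,y \in \XX} \bigl( d_G(x,y) - (u(x) - u(y)) \bigr) \gamma(x,y).
\end{equation*}
When $u \in \Lip$, Remark \ref{rmq:Lip} gives $u(x)-u(y) \leq d_G(x,y)$ for all $x,y$, so together with $\gamma \geq 0$ every summand of $\Delta(\gamma,u)$ is nonnegative; this is precisely weak duality, $\sum_{x,y} d_G(x,y)\gamma(x,y) \geq \sum_x u(x)(\mu(x)-\nu(x))$.

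For the forward direction I would invoke strong duality, that is the equality in the dual formulation \eqref{eq:dual_Kant}: if $\gamma_G$ is optimal and $u_G$ is optimal, then the primal and dual values coincide, whence $\Delta(\gamma_G, u_G) = 0$. Being a sum of nonnegative terms, each summand must vanish, so for every pair $(x,y)$ either $\gamma_G(x,y) = 0$ or $d_G(x,y) = u_G(x) - u_G(y)$. In particular $\gamma_G(x,y) > 0$ forces $u_G(x) - u_G(y) = d_G(x,y)$, which is the first assertion. The second assertion is its contrapositive: since $u_G \in \Lip$ always satisfies $u_G(x)-u_G(y) \leq d_G(x,y)$, the strict inequality $u_G(x)-u_G(y) < d_G(x,y)$ is exactly the negation of the equality case, and therefore forces $\gamma_G(x,y)=0$.

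For the converse I would read the gap identity backwards, using only weak duality. If $\gamma \in \Pi(\mu,\nu)$ and $u \in \Lip$ satisfy the complementarity \eqref{eq:complementary}, then in each summand of $\Delta(\gamma,u)$ either $\gamma(x,y)=0$ or the bracket $d_G(x,y)-(u(x)-u(y))$ vanishes, so $\Delta(\gamma,u)=0$ and hence $\sum_{x,y} d_G(x,y)\gamma(x,y) = \sum_x u(x)(\mu(x)-\nu(x))$. Now, for any competitor plan $\gamma' \in \Pi(\mu,\nu)$, weak duality applied to $(\gamma',u)$ yields $\sum_{x,y} d_G(x,y)\gamma'(x,y) \geq \sum_x u(x)(\mu(x)-\nu(x)) = \sum_{x,y} d_G(x,y)\gamma(x,y)$, so $\gamma$ minimizes the primal and is an optimal plan; symmetrically, for any competitor potential $u' \in \Lip$, weak duality applied to $(\gamma,u')$ gives $\sum_x u'(x)(\mu(x)-\nu(x)) \leq \sum_{x,y} d_G(x,y)\gamma(x,y) = \sum_x u(x)(\mu(x)-\nu(x))$, so $u$ maximizes the dual and is an optimal potential.

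I do not expect a genuine obstacle: this is textbook complementary slackness, and the proof is short once the gap functional $\Delta$ is written down. The only points deserving care are, first, the marginal-constraint manipulation in the opening display, which is what converts the pairwise dual expression into the separable functional of \eqref{eq:dual_Kant}; and second, keeping track of which half of duality each direction needs — strong duality (the equality in \eqref{eq:dual_Kant}) is used to force $\Delta=0$ in the forward direction, whereas the converse needs only weak duality, which follows directly from $u \in \Lip$ and $\gamma \geq 0$.
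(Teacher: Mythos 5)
Your proof is correct. Note that the paper does not actually prove Proposition \ref{prop:complementary}: it states it as a recalled fact and points to \cite{peyre2020computational}[Section 3.3 and Section 6], so there is no in-paper argument to compare against. What you have written is precisely the standard linear-programming complementary-slackness argument that the citation is invoking: the marginal-constraint identity converting $\sum_{x,y}(u(x)-u(y))\gamma(x,y)$ into $\sum_x u(x)(\mu(x)-\nu(x))$ is the right pivot, the gap functional $\Delta(\gamma,u)$ is nonnegative term by term for $u \in \Lip$ by Remark \ref{rmq:Lip}, strong duality from \eqref{eq:dual_Kant} forces $\Delta=0$ in the forward direction, and only weak duality is needed for the converse. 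The reduction of the second assertion to the contrapositive of the first is also handled correctly, since $u_G \in \Lip$ rules out $u_G(x)-u_G(y) > d_G(x,y)$ and hence makes strict inequality equivalent to failure of equality. Your proof would serve as a self-contained justification of the proposition.
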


\subsection{Definition of the Arens-Eels norm}

Let us define the vector space $M_{0}(\XX)$ as the set of all 0-mass functions $\xi$ on $\XX$, that is, verifying $\sum_{x \in \XX} \xi(x) = 0$. Given  $\mu$ and $\nu$ in $\PP(\XX)$, an example of a key function belonging to $M_{0}(\XX)$ is $\xi = \mu - \nu$ in what follows.  Introducing the notation $\delta_{x}$ for the function on $\XX$ such that $\delta_{x}(y) = 0$ if $y \neq x$ and $1$ otherwise, it follows that the difference of delta functions spans the whole space of 0-mass functions. This means that every function $\xi$ in $M_{0}(\XX)$ can be written as
\begin{equation}
\xi = \sum_{x,y \in \XX} a(x,y) (\delta_{x}-\delta_{y}), \label{eqref:xi_a}
\end{equation}
for some set of coefficients $a \in \RR^{\XX \times \XX}$. Obviously, there exists many ways to find a matrix $a$ of real coefficients verifying the representation \eqref{eqref:xi_a} of $\xi$, and this representation can also be expressed as
$$
\xi(x) = \sum_{y \in \XX} a(x,y) - \sum_{y \in \XX} a(y,x).
$$
Following  \cite{Weaver18}, and as also explained in \cite{pistone22}, the vector space $M_{0}(\XX)$ can be endowed with the following norm, and the resulting normed vector space is called the Arens-Eells space.

\begin{definition}\label{def:AEnorm}
Let $d$  be a  distance on $\XX$. For a function $\xi \in M_{0}(\XX)$, its Arens-Eells norm is defined by
\begin{equation}
\| \xi \|_{\ArEl(d)} = \inf \left\{ \sum_{x,y \in \XX} |a(x,y)| d(x,y) \right\}, \label{eq:AE}
\end{equation}
where the above infimum is taken over all the representation of $\xi$ by coefficients $a \in \RR^{\XX \times \XX}$ in \eqref{eqref:xi_a}.
\end{definition}
As explained in  \cite{pistone22}[Section 2.1],  using the dual formulation \eqref{eq:dual_Kant} of OT  combined with a dual expression of Arens-Eells (AE) norms, the fundamental connection between the K-distance and the  AE norm is that
\begin{equation}
K_{d}(\mu,\nu) = \| \mu - \nu \|_{\ArEl(d)}, \label{eq:equiv_K_AE}
\end{equation}
for any probability measures $\mu$ and $\nu$ in $\PP(\XX)$ and distance $d$ on $\XX$.

\subsection{The special case of a distance induced by a tree} \label{sec:tree}

Given Definition \ref{def:AEnorm}, computing the AE-norm is generally a delicate optimisation problem. Nevertheless, when the distance  on $\XX$ is induced by a tree, the AE-norm has a closed-form expression.

 To be more precise, we first recall some basic properties and notation for trees.  Let $T = (\XX, \ee_T, w)$ be a weighted tree, that is, a connected graph without cycles with elements of $\XX$ as vertices. Throughout the paper, it is  assumed that the edges of $T$ are weighted by the symmetric function  $w : \XX \times \XX \to \RR^+$ defined previously.
 Then, as before, for such a weighted tree, the distance $d_T(x,y)$ between two points $(x,y)$ is the $w$-weighted length of the (unique) path connecting them. Notice that in particular if $\{x,y\}\in \ee_T$, then $d_T(x,y)=w(x,y)$.
 
 For our purposes it will be useful to assign a canonical orientation to the trees we will work with. To this end, we will consider a distinguished vertex $r$ that we refer as the root and the (canonical) orientation is defined in terms of the root as follows: each edge $\{x,y\}$ will be oriented going towards the root $r$, with orientation to be denoted as $y \to x$ if $x$ is the point that is the closest to the root in the sense that $d_T(x,r)< d_T(y,r)$. The property that will be useful for us about this orientation is that every vertex, but the root, has exactly one outgoing edge and the graph is connected if and only if the directed graph is a rooted tree. Given a vertex $x$ in a rooted tree, the set $\child(x)$ of children of $x$  is defined as the set of all vertices $y$ such that $y \to x$, while the unique parent of $x \neq r$ is denoted by $x^{+}$. A rooted tree thus induces a partial order between vertices that we denote as $y \lesssim x$, including $x$ as a descendant of itself, which reads as $y$ is a descendent of $x$ or $x$ is an ancestor of $y$. For a given vertex $x$ in a rooted tree $T$, we denote by $T_x$  the subtree of $T$, rooted at $x$, that contains all the vertices $y$ such that  $y  \lesssim x$. Finally, for every $(x,y)\in \XX \times \XX$, we denote by $P_{x,y} \subset \ee_T$ the unique path going from $x$ to $y$ in the tree $T$
 
 Then, the following important property holds for  the AE-norm  when the distance $d$ is induced by a tree.

 \begin{prop}[Theorem 4 and Theorem 7 in \cite{pistone22}]\label{prop:AE_tree}
 Let $T = (\XX, \ee_T,w, r)$  be a weighted tree rooted at $r$ and suppose that $d_T$ is the distance on the tree $T$ induced by the weight function $w$. Then, for any $\xi \in M_{0}(\XX)$,
 $$
 \| \xi \|_{\ArEl(T)}:= \| \xi \|_{\ArEl(d_T)} = \sum_{x \in \XX \setminus \{r\}} d_T(x,x^{+}) |\Xi_T(x)|
 $$
 where for $x \in \XX$ the quantity $\Xi_T(x)$ is the cumulative function defined as
 \begin{equation}
 \Xi_T(x) = \sum_{y \lesssim x} \xi(y). \label{eq:xi}
\end{equation}
 Moreover, one has that 
 \begin{equation}
 \| \xi \|_{\ArEl(d_T)} = \sum_{y \in \XX} u_{T}(y) \xi(y) , \label{eq:ubar}
\end{equation}
where
\begin{align}\label{eq:pot}
u_{T}(y) = \sum_{y \lesssim x \neq r} d_{T}(x,x^{+}) \sgn ( \Xi_T(x) ) \quad \mbox{ for all } y \in \XX,
\end{align}
with $\sgn ( \cdot ) \in \{-1,+1\}$ the usual sign function having value at zero either $\sgn ( 0 ) = +1$ or $\sgn ( 0 ) = -1$.
 \end{prop}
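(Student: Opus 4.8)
The statement has two parts: first the closed-form expression for $\|\xi\|_{\ArEl(d_T)}$ in terms of the cumulative functions $\Xi_T$, and second the dual representation \eqref{eq:ubar}–\eqref{eq:pot} exhibiting $u_T$ as an optimal potential. Since this is attributed to results in \cite{pistone22}, the natural strategy is to prove both the upper and lower bounds on the infimum \eqref{eq:AE} and show they coincide with $\sum_{x \neq r} d_T(x,x^+)|\Xi_T(x)|$. The key structural fact I would exploit is that in a \emph{tree} there is a canonical, essentially unique way to route mass: every edge $\{x,x^+\}$ must carry exactly the net flow $\Xi_T(x)$ crossing the cut that separates the subtree $T_x$ from the rest of $T$.

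\textbf{Step 1 (canonical representation and upper bound).} First I would construct an explicit representation of $\xi$ of the form \eqref{eqref:xi_a} supported only on the (oriented) tree edges. For each non-root vertex $x$, set the coefficient on the edge $\{x,x^+\}$ to be $a(x,x^+) = \Xi_T(x)$ (with sign conventions making $\delta_x - \delta_{x^+}$ flow towards the root). A telescoping/induction argument up the tree — peeling off leaves and using $\sum_{y} \xi(y) = 0$ — verifies that this $a$ indeed represents $\xi$ via \eqref{eqref:xi_a}, because the net mass pushed through edge $\{x,x^+\}$ is exactly the accumulated imbalance $\Xi_T(x)$ of the subtree below it. Since $d_T(x,x^+) = w(x,x^+)$ on tree edges, this representation gives
$$
\|\xi\|_{\ArEl(d_T)} \leq \sum_{x \in \XX \setminus \{r\}} d_T(x,x^+)\,|\Xi_T(x)|,
$$
establishing the upper bound.

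\textbf{Step 2 (matching lower bound via duality).} For the reverse inequality I would use the dual characterization of the AE-norm, which (as noted after Definition \ref{def:AEnorm}) coincides with the Kantorovich dual \eqref{eq:dual_Kant}: $\|\xi\|_{\ArEl(d_T)} = \max_{u \in \LipT} \sum_x u(x)\xi(x)$. It suffices to exhibit a single admissible $u$ achieving the claimed value. I would take exactly the candidate $u_T$ from \eqref{eq:pot}: defining $u_T(r) = 0$ and, along each edge oriented $x \to x^+$, incrementing by $d_T(x,x^+)\sgn(\Xi_T(x))$. By telescoping along the unique tree path $P_{x,y}$, the increment $|u_T(x) - u_T(y)|$ is bounded by $\sum_{e \in P_{x,y}} w(e) = d_T(x,y)$, so $u_T \in \LipT$ by Remark \ref{rmq:Lip} (it suffices to check the constraint on edges, where it holds with equality). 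Then a summation-by-parts / change-of-order argument — rewriting $\sum_y u_T(y)\xi(y)$ by expanding $u_T(y)$ as a sum over ancestor edges and swapping the order of summation — converts this into $\sum_{x \neq r} d_T(x,x^+)\sgn(\Xi_T(x))\Xi_T(x) = \sum_{x \neq r} d_T(x,x^+)|\Xi_T(x)|$. This simultaneously proves \eqref{eq:ubar} and the matching lower bound, closing the gap.

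\textbf{Main obstacle.} The routine but delicate part is the summation-by-parts in Step 2: one must carefully justify the interchange
$$
\sum_{y \in \XX} u_T(y)\,\xi(y) = \sum_{y \in \XX} \xi(y) \sum_{y \lesssim x \neq r} d_T(x,x^+)\sgn(\Xi_T(x)) = \sum_{x \neq r} d_T(x,x^+)\sgn(\Xi_T(x)) \sum_{y \lesssim x}\xi(y),
$$
where the last inner sum is precisely $\Xi_T(x)$ by definition \eqref{eq:xi}. The swap hinges on the observation that $y$ appears in the defining sum for $u_T(y)$ indexed by exactly those $x$ with $y \lesssim x$ (the ancestors of $y$), which is the same index set controlling whether $y$ contributes to $\Xi_T(x)$. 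The sign convention $\sgn(0) \in \{\pm 1\}$ is harmless precisely because the factor $\Xi_T(x)$ vanishes whenever the sign is ambiguous, so the value of $u_T$ on edges with zero imbalance does not affect the objective. I expect no genuine difficulty beyond bookkeeping, since the tree structure guarantees that each cut-flow is forced, making the upper and lower bounds coincide automatically.
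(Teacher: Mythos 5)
Your proposal is correct. Note that the paper does not prove this proposition at all: it is imported verbatim as Theorems 4 and 7 of the cited reference of Montrucchio and Pistone, so there is no in-paper argument to compare against. What you supply is a clean, self-contained proof along the standard lines for tree metrics, and both halves check out. For the upper bound, the representation $a(x,x^{+})=\Xi_T(x)$ on tree edges does represent $\xi$, since evaluating $\sum_{x\neq r}\Xi_T(x)(\delta_x-\delta_{x^{+}})$ at a vertex $z$ gives $\Xi_T(z)-\sum_{v\in\child(z)}\Xi_T(v)=\xi(z)$ (and at the root $-\sum_{v\in\child(r)}\Xi_T(v)=\xi(r)$ because $\Xi_T(r)=\sum_y\xi(y)=0$), yielding $\|\xi\|_{\ArEl(d_T)}\leq\sum_{x\neq r}d_T(x,x^{+})|\Xi_T(x)|$. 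For the lower bound you only need weak duality, which follows directly from Definition \ref{def:AEnorm}: for any $u\in\LipT$ and any representation, $\sum_z u(z)\xi(z)=\sum_{x,y}a(x,y)(u(x)-u(y))\leq\sum_{x,y}|a(x,y)|d_T(x,y)$, so exhibiting the witness $u_T$ with $\sum_y u_T(y)\xi(y)=\sum_{x\neq r}d_T(x,x^{+})\sgn(\Xi_T(x))\Xi_T(x)=\sum_{x\neq r}d_T(x,x^{+})|\Xi_T(x)|$ closes the gap; the interchange of summation is legitimate because $y$ contributes to $u_T(y)$ exactly through its ancestors $x$, the same index set as in \eqref{eq:xi}, and the $\sgn(0)$ ambiguity is indeed harmless since it is multiplied by a vanishing $\Xi_T(x)$. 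The Lipschitz check for $u_T$ is the same telescoping computation the paper later carries out before Proposition \ref{prop:ubar}. One presentational remark: you invoke ``the dual characterization of the AE-norm'' for general $\xi\in M_0(\XX)$, whereas the paper only states \eqref{eq:equiv_K_AE} for $\xi=\mu-\nu$; since your argument only uses the easy inequality $\sum_x u(x)\xi(x)\leq\|\xi\|_{\ArEl(d_T)}$, which follows in one line from \eqref{eqref:xi_a}, it would be cleaner to state that inequality directly rather than appeal to full duality.
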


Hence, when the distance $d = d_T$ on $\XX$ is induced by a rooted tree, the AE-norm has a simple closed-form expression which only depends  on the tree structure and not on the optimal plan chosen, and as a byproduct for the K-distance $K_{d_T}(\mu,\nu)$ by taking $\xi = \mu -\nu$ as shown by Equality \eqref{eq:equiv_K_AE}. In this setting, such a closed-form expression for the K-distance is also known in the OT literature. It has  been initially derived in \cite{EM12}, and we refer to \cite{MV23} for a detailed course on the  K-distance when the metric is induced by trees. For applications in machine learning on the use of ptimal transport on trees, we also refer to \cite{Tam19,Sato20,Chen24}. 

For a probability measure $\mu \in \PP(\XX)$ and a rooted tree $T$, we define the notation
\begin{align}
    \mu(T_x) = \sum_{y \lesssim x} \mu(y).
\end{align}
Then, combining Equality \eqref{eq:equiv_K_AE} with Proposition \ref{prop:AE_tree}, we obtain that the K-distance, associated to a rooted tree, has the following expression
\begin{equation} \label{eq:K-dist_tree1}
K_{d_T}(\mu,\nu) = \sum_{x \in \XX \setminus \{r\}} d_T(x,x^{+}) \left| \mu(T_x) - \nu(T_x)\right|. 
\end{equation}
As a consequence of this formula we obtain that the K-distance is invariant when replacing $\mu(T_x)$ by $\mu(\XX\setminus T_x) := 1-\mu(T_x)$ (this is the sum of $\mu$ over the vertices not belonging to $T_x$)
at any $x$ in the interior of the sum since $\left| \mu(T_x) - \nu(T_x)\right| = \left| \mu(\XX\setminus T_x) - \nu(\XX\setminus T_x)\right|$ and this implies that $K_{d_T}(\mu,\nu)$ it is invariant with respect to the choice of the root.

Finally, for the function $u_{T}$ defined  by Equation \eqref{eq:ubar}, it is not difficult to check that it belongs to the set $\LipT$. Indeed, let $\{y_1,y_2\} \in \ee_T$ with $y_1 \lesssim y_2$ that is $y_2 = y_1^{+}$. Then, using the decomposition
\begin{eqnarray*}
u_{T}(y_1) & = & d_{T}(y_1,y_1^{+}) \sgn ( \Xi_T(y_1) ) + \sum_{y_2 \lesssim x \neq r} d_{T}(x,x^{+}) \sgn ( \Xi_T(x) ) \\
& = & d_{T}(y_1,y_1^{+}) \sgn ( \Xi_T(y_1) ) + u_{T}(y_2),
\end{eqnarray*}
one has that $u_{T}(y_1) -  u_{T}(y_2) = \pm d_{T}(y_1,y_1^{+})$ and thus, since $y_2 = y_1^{+}$,
\begin{equation}
|u_{T}(y_1) -  u_{T}(y_2)| = d_{T}(y_1,y_2) = w(y_1,y_2) \quad \mbox{ for all } \quad \{y_1,y_2\} \in \ee_T.
\end{equation}
Consequently, using Remark \ref{rmq:Lip}, we have finally proved the following proposition.
\begin{prop} \label{prop:ubar}
The function $u_{T}$  defined  by Equation \eqref{eq:pot} belongs to $\LipT$, and it satisfies
 $$
 u_{T}(y) -  u_{T}(y^{+}) = d_{T}(y,y^{+}) \sgn ( \Xi_T(y) ), \; \mbox{ for all } y \in \XX \setminus \{r\}.
 $$ 
 \end{prop}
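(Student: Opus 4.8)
The plan is to exploit the telescoping structure built into the definition \eqref{eq:pot} of $u_T$: I would first derive the one-step recursion asserted in the displayed identity directly from the definition, and then read off membership in $\LipT$ as an immediate consequence, invoking Remark \ref{rmq:Lip} to reduce the Lipschitz check to edges only.

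For the recursion, fix $y \in \XX \setminus \{r\}$ and note that the index set $\{x : y \lesssim x,\ x \neq r\}$ appearing in \eqref{eq:pot} is exactly the set of ancestors of $y$ (including $y$ itself) with the root removed. Splitting off the single term $x = y$, this set decomposes as $\{y\}$ together with $\{x : y^{+} \lesssim x,\ x \neq r\}$, and the latter is precisely the index set defining $u_T(y^{+})$. This gives
$$
u_T(y) = d_T(y,y^{+})\sgn(\Xi_T(y)) + \sum_{y^{+} \lesssim x \neq r} d_T(x,x^{+})\sgn(\Xi_T(x)) = d_T(y,y^{+})\sgn(\Xi_T(y)) + u_T(y^{+}),
$$
which rearranges to the claimed identity $u_T(y) - u_T(y^{+}) = d_T(y,y^{+})\sgn(\Xi_T(y))$. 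The only point needing a word of care is the boundary case $y^{+} = r$: then the residual sum is empty, so $u_T(r) = 0$, consistent with the stated convention, and the recursion still holds.

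For the Lipschitz claim I would invoke Remark \ref{rmq:Lip}, which reduces verifying $u_T \in \LipT$ to checking $|u_T(x) - u_T(y)| \leq d_T(x,y)$ only on edges $\{x,y\} \in \ee_T$. For such an edge, write it as $y_1 \to y_2$ with $y_2 = y_1^{+}$; the identity just proved gives $u_T(y_1) - u_T(y_2) = d_T(y_1,y_2)\sgn(\Xi_T(y_1))$, and taking absolute values with $|\sgn(\cdot)| = 1$ yields $|u_T(y_1) - u_T(y_2)| = d_T(y_1,y_2) = w(y_1,y_2)$. Thus the constraint in fact holds with equality on every edge, and Remark \ref{rmq:Lip} upgrades this to all pairs, giving $u_T \in \LipT$.

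I do not expect a genuine obstacle here: the argument is essentially bookkeeping, and the only delicate steps are the correct decomposition of the ancestor set and the empty-sum convention at the root. The closed form \eqref{eq:pot} was evidently designed precisely so that consecutive partial sums along any root-path differ by a single signed edge weight, which is what makes both the recursion and the edgewise equality fall out immediately.
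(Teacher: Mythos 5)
Your proof is correct and follows essentially the same route as the paper: split off the $x=y$ term from the ancestor sum to get the one-step recursion $u_T(y)-u_T(y^+) = d_T(y,y^+)\sgn(\Xi_T(y))$, then take absolute values and invoke Remark \ref{rmq:Lip} to conclude $u_T \in \LipT$. Your explicit remark on the empty-sum convention at the root is a minor addition the paper leaves implicit, but the argument is the same.
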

Now, using Proposition \ref{prop:AE_tree} with $\xi = \mu -\nu$, it follows that
\begin{equation} \label{eq:K-dist_tree2}
K_{d_T}(\mu,\nu) =  \sum_{y \in \XX} u_{T}(y) (\mu(y) - \nu(y)) .
\end{equation}
Therefore, $u_{T} \in \LipT$ is a maximiser  of the dual Kantorovich problem \eqref{eq:dual_Kant}, meaning that $u_{T}$ is an optimal dual potential for the OT problem with $d_T$ as the ground cost. Notice that it is imposed that $u_{T}(r)=0$. This comes from the fact that  $u_{T} \in \LipT$ has an additive constant as a degree of freedom, meaning that every shift of $u_T$ by a constant value, also gives function belonging to $\LipT$ and thus another optimal dual potential.

\begin{rem}
    It is easy to see that changing the root of the tree shifts the optimal potential \eqref{eq:pot_T} by a constant. 
\end{rem}

  \begin{rem}\label{rem:xi}
 As discussed in \cite{pistone22}, there may exists multiple solution $u_T$ of the form \eqref{eq:pot} that satisfy Equality \eqref{eq:ubar} due to the  indeterminacy of the sign function for the vertices $x$ at which $ \Xi_T(x)$ vanishes. However, when the measures $\mu$ and $\nu$ are weakly non-degenerate 
 and taking $\xi = \mu-\nu$, it follows that $\Xi_T(x) \neq 0$ for any $x\in \XX  \setminus \{r\}$. Indeed, suppose by contradiction that $\Xi_T(x)=0$ for $x \neq r$, then
\begin{align*}
0 = \Xi_T(x) = \sum_{y\lesssim x}(\mu(y)-\nu(y)) = \mu(T_x)-\nu(T_x),
\end{align*}
which contradicts the weak non-degeneracy of $\mu$ and $\nu$. Hence, under this non-degeneracy condition, Theorem \ref{theo:unique-potential} and Proposition \ref{prop:AE_tree}  imply  there exists  a unique Kantorovich potential $u_{T}$ satisfying  $u_{T}(r) = 0$ whose expression  is given by \eqref{eq:pot}.
 \end{rem}

\subsection{Optimization over spanning trees}

We now provide further details on the minimization problem \eqref{min:ST} over spanning trees to compute the K-distance.
In this section, we assume that $\XX$ is a finite metric space with a distance $d_G$ that is induced by an edge-weighted graph $G = (\XX,\ee_G,w)$ assumed to be connected and without loops. We denote by $\ST$ the set of rooted spanning trees of $G$ that is the set of subgraphs of $G$ that are rooted trees containing all the vertices of $G$ and whose edges belong to $\ee_G$. A tree in $\ST$ rooted at $r$ is written as $T = (\XX,\ee_T,w,r)$, and a key remark is that, for any $\{x,y\} \in \ee_T \subset \ee_G$,
\begin{equation}
d_{T}(x,y) = w(x,y).
\end{equation}
Then, we have the following fundamental property.
\begin{prop}\label{prop:optimST} \cite{pistone22}[Theorem 10] Let  $\mu$ and $\nu$ be  two  probability measures in $\PP(\XX)$.
 For the ground cost $d_G$, the K-distance between two probability measures $\mu$ and $\nu$ on $\XX$ is the value of the following optimization problem:
\begin{eqnarray}
K_{d_G}(\mu,\nu)  &=& \min_{T \in \ST} K_{d_T}(\mu,\nu) \nonumber \\
&=&  \min_{T \in \ST}   \| \mu - \nu \|_{\ArEl(T)} \nonumber \\
&=&  \min_{T \in \ST} \sum_{x \in \XX\setminus \{r\}} d_T(x,x^{+}) \left| \mu(T_x) - \nu(T_x)\right|,\label{eq:optimST}
\end{eqnarray}
where the above minimum is taken over the set  of weighted rooted spanning trees of $G$, with $d_T$ denoting the metric induced  by a given tree $T = (\XX, \ee_T,w, r) \in \ST$.
\end{prop}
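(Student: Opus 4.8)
The plan is to reduce everything to the first equality, since the other two are formal. Indeed, $\min_{T\in\ST}K_{d_T}(\mu,\nu)=\min_{T\in\ST}\|\mu-\nu\|_{\ArEl(d_T)}$ is just \eqref{eq:equiv_K_AE} applied to each tree metric $d_T$, and the third expression follows from Proposition \ref{prop:AE_tree} with $\xi=\mu-\nu$ together with the identity $d_T(x,x^{+})=w(x,x^{+})$ on tree edges, i.e.\ from \eqref{eq:K-dist_tree1}. So the whole content is the identity $K_{d_G}(\mu,\nu)=\min_{T\in\ST}K_{d_T}(\mu,\nu)$, which I would prove by two inequalities.

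For the inequality $\le$, I would observe that every $T\in\ST$ is a subgraph of $G$ carrying the same edge weights, so the unique $T$-path between $x$ and $y$ is a (generally non-shortest) $G$-path; hence $d_G(x,y)\le d_T(x,y)$ for all $x,y\in\XX$. Since the transportation polytope $\Pi(\mu,\nu)$ does not depend on the ground cost, a pointwise domination of the costs dominates the value of the LP \eqref{eq:OT_Kant}, giving $K_{d_G}(\mu,\nu)\le K_{d_T}(\mu,\nu)$ for every $T$, and therefore $K_{d_G}(\mu,\nu)\le\min_{T\in\ST}K_{d_T}(\mu,\nu)$.

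For the reverse inequality I would work through the Arens--Eells representation \eqref{eq:AE}. First I reduce the admissible representations to the edges of $G$: in any representation $\mu-\nu=\sum_{x,y}a(x,y)(\delta_x-\delta_y)$, each term attached to a non-adjacent pair $(x,y)$ can be replaced by the telescoping sum $\delta_x-\delta_y=\sum_i(\delta_{x_{i-1}}-\delta_{x_i})$ along a $d_G$-geodesic $x=x_0,\dots,x_m=y$, redistributing the coefficient $a(x,y)$ onto the edges of that path; since $d_G(x,y)=\sum_i w(x_{i-1},x_i)$ along a geodesic, the total cost $\sum_{x,y}|a(x,y)|d_G(x,y)$ is unchanged. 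Thus the infimum in \eqref{eq:AE} is attained over coefficients supported on the oriented edges of $G$, so $K_{d_G}(\mu,\nu)$ equals the value of a minimum-cost-flow LP on $G$: minimize $\sum_{\{x,y\}\in\ee_G}|f(x,y)|\,w(x,y)$ over edge flows $f$ with divergence $\mu-\nu$. I then take a basic optimal flow $f^{\ast}$ (splitting each edge into its two orientations to make this a genuine LP); if its undirected support contained a cycle, pushing a circulation around it in the cheaper direction would not increase the cost while strictly reducing the support, so a minimal-support optimal flow is supported on a forest $F\subseteq\ee_G$. As $G$ is connected, I extend $F$ to a spanning tree $T_\ast\in\ST$. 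On a tree, flow conservation forces the flow across each edge: cutting $\{x,x^{+}\}$ isolates the subtree $T_x$, so the net flow across it equals $\mu(T_x)-\nu(T_x)=\Xi_{T_\ast}(x)$. Hence, by \eqref{eq:K-dist_tree1},
$$K_{d_G}(\mu,\nu)=\sum_{x\in\XX\setminus\{r\}} w(x,x^{+})\,|\Xi_{T_\ast}(x)|=K_{d_{T_\ast}}(\mu,\nu)\ \ge\ \min_{T\in\ST}K_{d_T}(\mu,\nu),$$
which combined with the first inequality gives the claimed identity.

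I expect the main obstacle to be the acyclicity step: rigorously justifying that the min-cost flow is attained and can be chosen with forest support (the circulation-cancelling argument, including the borderline case of a zero-net-cost cycle), and then checking that on the edges of $T_\ast$ added to $F$ the forced flow remains consistent with the tree formula \eqref{eq:K-dist_tree1}. The remaining ingredients—cost domination for $\le$, the geodesic telescoping, and the tree flow-conservation identity—are routine. An alternative would be to start from a vertex optimal plan $\gamma_G$, use that $\supp(\gamma_G)$ is a forest (Proposition \ref{prop:forest}) together with complementary slackness to identify tight geodesic edges, and assemble a spanning tree from them; but the Arens--Eells flow formulation seems the most economical given the machinery already developed.
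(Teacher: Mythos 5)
Your argument is correct, and it is worth noting that the paper itself offers no proof of this proposition: it is quoted from \cite{pistone22}[Theorem~10], with only a remark reducing the minimum from spanning forests to spanning trees. Your write-up is therefore a genuine self-contained proof rather than a reproduction of one. The two halves are sound: the inequality $K_{d_G}\le K_{d_T}$ follows exactly as you say from $d_G\le d_T$ pointwise on the common transportation polytope; and the reverse inequality via the Arens--Eells/Beckmann reduction is the natural route, consistent with the flow machinery the paper develops later (cf.\ the Beckmann formulation \eqref{eq:beck} and Proposition~\ref{prop:flow_formula}). Your circulation-cancelling step is fine as sketched (push in the direction of non-positive one-sided derivative of the piecewise-linear cost until an edge flow vanishes; optimality forbids strict decrease, so the cost is constant and the support strictly shrinks), and extending the resulting forest to a spanning tree $T_\ast$ automatically handles the forest-versus-tree issue raised in the paper's remark: on edges of $T_\ast\setminus F$ the cut identity forces $\Xi_{T_\ast}(x)=0$, so they contribute nothing. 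Two small points of rigor you may wish to make explicit: (i) in the telescoping step the combined edge-supported representation has cost \emph{at most} the original (coefficients on a shared edge may partially cancel), which is all you need since the edge-restricted infimum is trivially $\ge$ the unrestricted one; (ii) in the final display you only need $\sum_e w(e)\left(f^+(e)+f^-(e)\right)\ge\sum_e w(e)\left|f^+(e)-f^-(e)\right|=\sum_{x\ne r}w(x,x^+)\left|\Xi_{T_\ast}(x)\right|$, so you can write ``$\ge$'' there and avoid having to argue that the optimal flow uses each edge in only one direction; the chain $K_{d_G}\ge K_{d_{T_\ast}}\ge\min_T K_{d_T}\ge K_{d_G}$ then closes.
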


\begin{rem}
The right statement of \cite{pistone22}[Theorem 10] is to take the minimum over the set of spanning forest of the graph, but we can reduce the optimization problem to spanning trees by adding transfers with zero mass  between components of the forest. From the Caley's formula one obtains that the number of rooted spanning trees of the complete graph on $N$ vertices is $N^{N-1}$, while the number of spanning forests of the same graph is $(N+1)^{N-1}$. This gives a (small) reduction of the problem since $(N+1)^{N-1}\sim e N^{N-1}$ as $N\to\infty$.
\end{rem}

\begin{rem}
In the equalities of Proposition \ref{prop:optimST} the root of the tree is only needed in order to obtain the closed form formula in the last line \eqref{eq:optimST}, but since it is invariant under rerooting, one can use any root.
\end{rem}

\begin{rem}\label{rem:supps}
It is important to disambiguate an optimal spanning tree $T_\ast$ of $G$ minimizing \eqref{min:ST} from the support the optimal plan $\gamma_{T_\ast}$ that is a vertex of $\Pi(\mu,\nu)$ to which it is associated when considering optimal transport with ground cost $d_{T_\ast}$, which is an forest of $K_{\XX}$.  There may exist $(x,y)\notin \vec{\ee}_G$ with $\gamma_{T_\ast}(x,y)>0$, meaning that a positive quantity of mass is send from $x$ to $y$ using $P_{x,y}$ in $T_\ast$, as illustrated in  Figure \ref{fig:supports}.

We claim that if the optimal plan $\gamma_{T_\ast}$ associated to $K_{d_{T_\ast}}(\mu,\nu)$ is a spanning tree, then $\gamma_{T_\ast}$ uses every edge of $T_\ast$ (in some direction) to send mass. To prove the claim notice that if an edge $\{x,y\}\in \ee_T$ is not used in any direction, the flow of mass is restricted to $T_x$ and $\XX\setminus T_x$, meaning that the support of $\gamma_{T_\ast}$ is a forest and not a tree.
\begin{figure}[htbp]
    \centering
    \includegraphics{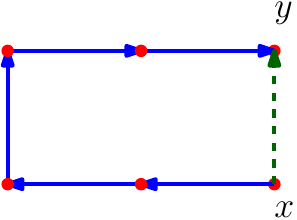}
    \caption{The red dots represent a set of $N=6$ vertices, and the blue segments are the edges in $T_\ast$. The green and dashed arrow is an oriented edge $(x,y)$ in the support  of $\gamma_{T_\ast}$ (that is such that $\gamma_{T_\ast}(x,y)>0$) which does not belong to $\ee_{T_\ast}$. The mass sent from $x$ to $y$ follows the blue path.}
    \label{fig:supports}
\end{figure}
\end{rem}

\section{Theoretical results : properties of the optimal spanning trees, plan and potentials} \label{sec:contributions}

In this section, we study several properties of the optimal spanning trees that minimize \eqref{eq:optimST}, and we study their connections to the solutions of the primal \eqref{eq:OT_Kant} and dual \eqref{eq:dual_Kant} formulations of Kantorovich OT when taking the metric $d_G$ as the ground cost. 

\subsection{From spanning trees to optimal couplings}

Let us first remark that a closed-form of an optimal coupling $\gamma_G$ between two arbitrary probability measures $\mu$ and $\nu$ in $\PP(\XX)$ is generally not available. One has thus to rely on computational approaches to solve the linear program \eqref{eq:OT_Kant} that yield a numerical approximation of an OT plan. Interestingly, thanks to the closed-form expressions  \eqref{eq:K-dist_tree1} and  \eqref{eq:K-dist_tree2}  of the K-distance  when   the ground cost is a metric induced by a tree, it is possible to find an explicit expression of  $\gamma_G$ and the related optimal dual potential $u_G$ under restrictive assumptions on $\mu$ and $\nu$ when the ground cost is the metric induced by the weighted graph $G = (\XX,\ee_G,w)$. 

The result below is partly inspired from  \cite{pistone22}[Theorem 6]. Before stating it, we need the following definitions.
For $a\in \R$ we define its positive part $a^+$ and negative part $a^-$ as
\[
	a^+ = \max(a,0) \quad \text{and} \quad   a^- = \max(-a,0).
\]
It is easy to see that that at most one of them is different from $0$, $a = a^+ - a^-$ and $|a|= a^+ + a^-$.

\begin{prop}\label{prop:OT_formula}   Let  $\mu$ and $\nu$ be  two  probability measures in $\PP(\XX)$. Let $T_\ast \in \ST$  be a minimizer of the optimization problem \eqref{eq:optimST} rooted at $r$. Suppose that  $T_\ast$ satisfies the following condition
\begin{equation} \label{eq:condTast}
\Xi_{T_\ast}(x)  \cdot \Xi_{T_\ast}(y)  < 0, \; \mbox{ for all }  y \in \child(x),  \; \mbox{ and } \; x \in \XX\setminus \{r\}, 
\end{equation}
where  $\Xi_{T_\ast}(x) = \sum_{y \lesssim x} \xi(y)$ for $\xi = \mu - \nu$, and $\cdot$ denotes the usual multiplication between two scalars.
Then,  the mapping $\gamma_{T_\ast} : \XX \times \XX \to \RR$ defined by
\begin{eqnarray*}
\gamma_{T_\ast}(x,x^{+}) & =  & \Xi_{T_\ast}^{+}(x), \\
\gamma_{T_\ast}(x^{+},x) & =  & \Xi_{T_\ast}^{-}(x), \\
\gamma_{T_\ast}(x,x) & = & \mu(x) -  \Xi_{T_\ast}^{+}(x) - \sum_{y \in \child(x)} \Xi_{T_\ast}^{-}(y), \\
 \gamma_{T_\ast}(x,y) & = & 0 \mbox{ otherwise},
\end{eqnarray*}
is  an admissible transport plan between $\mu$ and $\nu$, that is $\gamma_{T_\ast} \in \Pi(\mu,\nu)$.
Moreover, define the function
\begin{equation} \label{eq:uast}
u_{T_\ast}(y) = \sum_{y \lesssim x \neq r} d_{T_\ast}(x,x^{+}) \sgn ( \Xi_{T_\ast}(x) ), \quad \mbox{ for all } y \in \XX,
\end{equation} 
and suppose that it satisfies the condition
\begin{equation} \label{eq:cond-uast}
u_{T_\ast} \in \Lip.
\end{equation} 
Then, one has that $\gamma_{T_\ast}$ is an optimal coupling for the ground cost $d_G$ and
$u_{T_\ast}$ is an optimal dual potential, that is
$$
K_{d_G}(\mu,\nu)  =  \sum_{x, y \in \XX} d_G(x,y) \gamma_\ast(x,y) =  \sum_{y \in \XX} u_{T_\ast}(y) (\mu(y) - \nu(y)).
$$
\end{prop}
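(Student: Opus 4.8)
The plan is to prove the statement in two stages. First I would show that $\gamma_{T_\ast}$ is a feasible coupling, which uses only the sign‑alternation hypothesis \eqref{eq:condTast}; then I would show that the feasible pair $(\gamma_{T_\ast}, u_{T_\ast})$ is primal/dual optimal for the ground cost $d_G$ by verifying the complementary slackness criterion of Proposition \ref{prop:complementary}, which is where the Lipschitz hypothesis \eqref{eq:cond-uast} is used. The whole computation rests on the elementary recursion
\begin{equation*}
\Xi_{T_\ast}(x) = (\mu(x) - \nu(x)) + \sum_{y \in \child(x)} \Xi_{T_\ast}(y),
\end{equation*}
which follows from $\Xi_{T_\ast}(x) = \sum_{z \lesssim x}(\mu(z)-\nu(z))$ and the disjoint decomposition $T_x = \{x\} \cup \bigcup_{y \in \child(x)} T_y$, together with $\Xi_{T_\ast}(r)=0$ since $\mu,\nu$ are probability measures.

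For feasibility I would first check the marginal constraints. Summing the row $\gamma_{T_\ast}(x,\cdot)$ collects $\Xi_{T_\ast}^{+}(x)$ (mass sent to the parent), $\sum_{y \in \child(x)}\Xi_{T_\ast}^{-}(y)$ (mass sent to the children, using $\gamma_{T_\ast}(x,y)=\gamma_{T_\ast}(y^{+},y)$ for $y\in\child(x)$) and the diagonal term, and these telescope to $\mu(x)$ by the very definition of $\gamma_{T_\ast}(x,x)$; summing the column $\gamma_{T_\ast}(\cdot,y)$ and substituting $-\Xi_{T_\ast}^{+}+\Xi_{T_\ast}^{-}=-\Xi_{T_\ast}$ and $\Xi_{T_\ast}^{+}-\Xi_{T_\ast}^{-}=\Xi_{T_\ast}$ reduces, through the recursion, to $\nu(y)$. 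Nonnegativity of the off‑diagonal entries is immediate, as positive and negative parts are nonnegative. The diagonal entries are the delicate ones, and this is where \eqref{eq:condTast} enters: for $x \neq r$ the alternation $\Xi_{T_\ast}(x)\cdot\Xi_{T_\ast}(y)<0$ over $y\in\child(x)$ forces a clean dichotomy. If $\Xi_{T_\ast}(x)>0$ then every child satisfies $\Xi_{T_\ast}(y)<0$, so $\Xi_{T_\ast}^{+}(x)=\Xi_{T_\ast}(x)$ and $\sum_{y}\Xi_{T_\ast}^{-}(y)=-\sum_{y}\Xi_{T_\ast}(y)$, whence the recursion gives $\gamma_{T_\ast}(x,x)=\mu(x)-\Xi_{T_\ast}(x)+\sum_{y}\Xi_{T_\ast}(y)=\nu(x)\geq 0$; symmetrically, if $\Xi_{T_\ast}(x)<0$ then $\Xi_{T_\ast}^{+}(x)=0$ and every child has $\Xi_{T_\ast}^{-}(y)=0$, so $\gamma_{T_\ast}(x,x)=\mu(x)\geq 0$.

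With feasibility in hand, optimality would follow from complementary slackness. By Proposition \ref{prop:ubar} applied to the tree $T_\ast$, the function $u_{T_\ast}$ of \eqref{eq:uast} satisfies $u_{T_\ast}(x)-u_{T_\ast}(x^{+}) = d_{T_\ast}(x,x^{+})\sgn(\Xi_{T_\ast}(x))$, and since $\{x,x^{+}\}\in\ee_{T_\ast}\subset\ee_G$ we have $d_{T_\ast}(x,x^{+})=w(x,x^{+})=d_G(x,x^{+})$. I would then inspect each positive entry of $\gamma_{T_\ast}$: whenever $\gamma_{T_\ast}(x,x^{+})=\Xi_{T_\ast}^{+}(x)>0$ one has $\Xi_{T_\ast}(x)>0$, so $u_{T_\ast}(x)-u_{T_\ast}(x^{+})=d_G(x,x^{+})$; whenever $\gamma_{T_\ast}(x^{+},x)=\Xi_{T_\ast}^{-}(x)>0$ one has $\Xi_{T_\ast}(x)<0$, so $u_{T_\ast}(x^{+})-u_{T_\ast}(x)=d_G(x^{+},x)$; and on the diagonal the required relation reads $0=d_G(x,x)$, which holds trivially. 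Hence $(\gamma_{T_\ast},u_{T_\ast})$ satisfies \eqref{eq:complementary}, and since $u_{T_\ast}\in\Lip$ by hypothesis \eqref{eq:cond-uast}, the converse part of Proposition \ref{prop:complementary} yields that $\gamma_{T_\ast}$ is $d_G$‑optimal and $u_{T_\ast}$ an optimal dual potential; the displayed equality of primal cost, dual value and $K_{d_G}(\mu,\nu)$ is then just strong duality \eqref{eq:dual_Kant}.

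The step I expect to be the main obstacle is the nonnegativity of the diagonal \emph{at the root}. Since $\Xi_{T_\ast}(r)=0$, the hypothesis \eqref{eq:condTast} says nothing about the signs of $\Xi_{T_\ast}(y)$ for $y\in\child(r)$, so the dichotomy used at internal nodes breaks down: one only gets $\gamma_{T_\ast}(r,r)=\mu(r)-\sum_{y\in\child(r)}\Xi_{T_\ast}^{-}(y)=\nu(r)-\sum_{y\in\child(r)}\Xi_{T_\ast}^{+}(y)$, which is nonnegative when the children of $r$ carry a single sign (all mass flowing towards, or all away from, the root) but is not controlled otherwise. Securing this term is exactly the point where I would expect to need more than \eqref{eq:condTast} alone, for instance exploiting the optimality of $T_\ast$ together with the root‑invariance of the value (Proposition \ref{prop:optimST}) to place the root so that no transport passes through it, or an additional sign condition at $r$; this is the part of the argument I would scrutinise most carefully.
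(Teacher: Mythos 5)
Your argument is essentially identical to the paper's proof: feasibility via the recursion $\xi(x)=\Xi_{T_\ast}(x)-\sum_{y\in\child(x)}\Xi_{T_\ast}(y)$ and the sign dichotomy from \eqref{eq:condTast}, then optimality via complementary slackness (Proposition \ref{prop:complementary}) using Proposition \ref{prop:ubar} and the hypothesis $u_{T_\ast}\in\Lip$. Both the structure and the individual computations match.

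The obstacle you flag at the root is real, and it is worth stressing that the paper's own proof does not resolve it either: the diagonal argument there only treats the cases $\sgn(\Xi_{T_\ast}(x))>0$ and $\sgn(\Xi_{T_\ast}(x))<0$, and since $\Xi_{T_\ast}(r)=0$ while \eqref{eq:condTast} is silent about the children of $r$, neither case applies at $x=r$. A concrete instance where the conclusion fails as stated: let $G=T_\ast$ be the star on $\XX=\{r,y_1,y_2\}$ rooted at $r$, with $\mu=(0,\tfrac12,\tfrac12)$ and $\nu=(0.1,\,0.8,\,0.1)$. Condition \eqref{eq:condTast} holds vacuously (the only non-root vertices are leaves), yet $\Xi_{T_\ast}(y_1)=-0.3$, $\Xi_{T_\ast}(y_2)=0.4$, so
$$
\gamma_{T_\ast}(r,r)=\mu(r)-\Xi_{T_\ast}^{+}(r)-\sum_{y\in\child(r)}\Xi_{T_\ast}^{-}(y)=-0.3<0,
$$
and the prescribed $\gamma_{T_\ast}$ is not an admissible plan. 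So an additional hypothesis at the root is genuinely needed --- e.g.\ that the quantities $\Xi_{T_\ast}(y)$, $y\in\child(r)$, all have the same sign (no mass transiting through $r$), or directly that $\mu(r)\geq\sum_{y\in\child(r)}\Xi_{T_\ast}^{-}(y)$ --- and your instinct to scrutinise exactly this term is correct. Everything else in your proposal is sound and complete.
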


\begin{proof}[Proof of Proposition \ref{prop:OT_formula}]
Let us first prove that $\gamma_{T_\ast}$ is feasible, that is $\gamma_{T_\ast} \in \Pi(\mu,\nu)$.  Using the definition of $\gamma_\ast$, one has that
    \begin{align*}
        \sum_{y\in X}\gamma_{T_\ast}(x,y)&=\gamma_{T_\ast}(x,x)+\gamma_{T_\ast}(x,x^+)+\sum_{u \in \child(x)} \gamma_{T_\ast}(x,u)\\
        &= \gamma_{T_\ast}(x,x)+\Xi_{T_\ast}^{+}(x)+\sum_{u \in \child(x)} \Xi_{T_\ast}^{-}(u)\\
        &= \mu(x).
    \end{align*}
Then, using the fact that
$
\xi(y) = \Xi_{T_\ast}(y) - \sum_{u \in \child(y)} \Xi_{T_\ast}(u),
$
it follows that
    \begin{align*}
        \sum_{x\in X}\gamma_{T_\ast}(x,y)&=\gamma_{T_\ast}(y,y)+\gamma_{T_\ast}(y^+,y)+\sum_{u \in \child(y)} \gamma_{T_\ast}(u,y)\\
        &= \gamma_{T_\ast}(y,y)+\Xi_{T_\ast}^{-}(y)+\sum_{u \in \child(x)} \Xi_{T_\ast}^{+}(u)\\
        &= \mu(y)-\Xi_{T_\ast}^{+}(y)-\sum_{u \in \child(x)} \Xi_{T_\ast}^{-}(u) +\Xi_{T_\ast}^{-}(y)+\sum_{u \in \child(x)} \Xi_{T_\ast}^{+}(u)\\
        &=\mu(y)-\Xi_{T_\ast}(y)+\sum_{u \in \child(x)} \Xi_{T_\ast}(u)\\
        &=\mu(y)-\xi(y)\\
        &=\nu(y).
    \end{align*}
Obviously, by construction, one has that $\gamma_{T_\ast}(x,y)  \geq 0$ for all $(x,y) \in \XX \times \XX$ with $x \neq y$. Therefore, thanks to the above computations, it only remains to prove that $\gamma_{T_\ast}(x,x) \geq 0$ for all $x \in \XX$   to conclude that $\gamma_{T_\ast} \in \Pi(\mu,\nu)$. To this end, let $x \in \XX$ and suppose that $\sgn( \Xi_{T_\ast}(x) ) > 0$. By condition \eqref{eq:condTast}, one has thus that $ \sgn( \Xi_{T_\ast}(y) ) < 0$ for all $y \in \child(x)$. Now, using the equality
\begin{equation}
\xi(x) = \Xi_{T_\ast}(x) - \sum_{y \in \child(x)} \Xi_{T_\ast}(y) \label{eq:xiXi}
\end{equation}
combined with the facts that $ \Xi_{T_\ast}(x) =  \Xi_{T_\ast}^{+}(x) $ and $\Xi_{T_\ast}(y) = -\Xi_{T_\ast}^{-}(y)$ for  all $y \in \child(x)$, 
we obtain that
$$
\mu(x) -  \Xi_{T_\ast}^{+}(x) = \nu(x) + \sum_{y \in \child(x)} \Xi_{T_\ast}^{-}(y).
$$
 This implies that
$$
\gamma_{T_\ast}(x,x) = \mu(x) -  \Xi_{T_\ast}^{+}(x)  - \sum_{y \in \child(x)} \Xi_{T_\ast}^{-}(y) = \nu(x) \geq 0.
$$
Now, supposing that $\sgn( \Xi_{T_\ast}(x) ) < 0$, implies, by condition \eqref{eq:condTast},   that $ \sgn( \Xi_{T_\ast}(y) ) > 0$ for all $y \in \child(x)$. Therefore, by definition of $\gamma_{T_\ast}(x,x)$, this yields
$$
\gamma_{T_\ast}(x,x) = \mu(x) -  \Xi_{T_\ast}^{+}(x)  - \sum_{y \in \child(x)} \Xi_{T_\ast}^{-}(y) = \mu(x) \geq 0.
$$
We have thus proved that $\gamma_{T_\ast}(x,x) \geq 0$ for all $x \in \XX$ implying that $\gamma_{T_\ast} \in \Pi(\mu,\nu)$.

Let us now prove that $\gamma_{T_\ast}$ and $u_{T_\ast}$ are complementary in the sense of \eqref{eq:complementary}. By construction of $\gamma_{T_\ast}$, its support is restricted to the pairs $\{x,x\}$, $\{x,x^+\}$ and $\{x^+,x\}$ of edges in $\ee_{T_\ast}$. We first remark that $\gamma_{T_\ast}(x,x) > 0$ necessarily implies that $u(x) - u(x) = 0 = d_{G}(x,x)$. Now, for $x \in \XX\setminus \{r\}$, one has that $\gamma_{T_\ast}(x,x^+) > 0$ implies that $\sgn(\Xi_{T_\ast}(x)) > 0$, and thus by Proposition \ref{prop:ubar} one has that $u_{T_\ast}(x) -  u_{T_\ast}(x^{+}) = d_{T}(x,x^{+}) = w(x,x^{+}) = d_{G}(x,x^{+})$. Similarly,  one has that $\gamma(x^+,x) > 0$ implies that $\sgn(\Xi_{T_\ast}(x)) < 0$, and thus by Proposition \ref{prop:ubar} one has that $u_{T_\ast}(x) -  u_{T_\ast}(x^{+}) = -d_{T_{\ast}}(x,x^{+}) = -w(x,x^{+}) = -d_{G}(x,x^{+})$, meaning that $u_{T_\ast}(x^{+}) -  u_{T_\ast}(x) = d_{G}(x^{+},x)$ by symmetry of a distance. Hence, we have proved that $\gamma_{T_\ast}$ and $u_{T_\ast}$ are complementary.

Now, since $u_{T_\ast} \in \Lip $ (by assumption) it follows from Proposition \ref{prop:complementary} that  $\gamma_{T_\ast}$ is an optimal coupling for the ground cost $d_G$ and $u_{T_\ast}$ is an optimal dual potential which completes the proof of Proposition \ref{prop:OT_formula}.
\end{proof}

\begin{rem}
The first part of Proposition \ref{prop:OT_formula}  on a closed form expression for an optimal transport plan  is due to   \cite{pistone22}[Theorem 6]. Under the condition \eqref{eq:condTast}, this expression of $\gamma_{T_\ast}$ as an optimal transport plan associated to the $K$-distance with ground cost $d_T$ is valid for any tree $T$. 
\end{rem}

The novelty of  Proposition \ref{prop:OT_formula} is to prove that, under the Lipschitz condition \eqref{eq:cond-uast}, the function $u_{T_\ast}$ is a Kantorovich potential for the $K$-distance associated to the ground cost $d_G$. More precisely,  if $T_\ast \in \ST$  is a minimiser of the optimisation problem \eqref{eq:optimST}, it follows from Equality \eqref{eq:K-dist_tree2} that
$$
K_{d_G}(\mu,\nu)  = K_{d_{T_\ast}}(\mu,\nu) =  \sum_{y \in \XX} u_{T_\ast}(y) (\mu(y) - \nu(y)).
$$
Then, if the following two conditions hold: $T_\ast$ satisfies \eqref{eq:condTast} and  $u_{T_\ast} \in \Lip $, then Proposition \ref{prop:OT_formula} proves how one obtains an optimal transport plan (in a closed form) for  OT with ground cost $d_G$ by solving the optimisation problem  \eqref{eq:optimST} on the set $\ST$ of spanning trees of $G$. We next discuss these two conditions in more detail. Before doing so, we justify in the  next section that condition \eqref{eq:condTast} is exactly the setup where an optimal transport plan is equal to the solution of a minimum cost flow problem called the Beckmann formulation of OT.

\subsubsection{The connection with the Beckmann formulation of OT on a graph}

	It is well known in the OT literature that the transportation problem using the ground cost $d_G$ induced by a graph $G$ is intimately related with the minimum cost flow problem, see e.g.\ \cite{peyre2020computational}[Chapter 6], and the so-called Beckmann formulation  \cite{BF}  of OT that reads as:
	\begin{equation} \label{eq:beck}
		K_{d_G}(\mu,\nu)  = \min_{f \in \RR_+^{\Vec{\ee}_G}} \left\{ \sum_{(x,y) \in \Vec{\ee}_G} d_G(x,y) f(x,y) \; : \; \text{div}(f) = \mu- \nu \right\},
	\end{equation}
	where the divergence operator  $\text{div} : \RR_+^{\Vec{\ee}_G} \to \RR^\XX$ of the flow $f$ is defined on the set $\Vec{\ee}_G$ of oriented edges of $G$ as: 
	$$
	\text{div}(f)(x) = \sum_{y : (x,y) \in \Vec{\ee}_G} f(x,y) - \sum_{y : (y,x) \in \Vec{\ee}_G}  f(y,x), \text{ for all } x \in \XX.
	$$
	As explained in \cite{peyre2020computational}[Chapter 6], the  minimization problem \eqref{eq:beck} is derived from the dual formulation of the OT problem \eqref{eq:dual_Kant} when restricted to the Lipschitz constraints~\eqref{eq:cond_Lip} on the edges of $G$ combined with duality arguments. 
	
	When $G = T$ is a rooted  tree with orientation  towards the root, the set of  oriented edges of $T$ is $\Vec{\ee}_T = \{ (x,x^+), \; x \in \XX \setminus \{r\} \} \cup \{ (x^+,x), \; x \in \XX \setminus \{r\} \}$. In this framework, it is not difficult to see that there exists an explicit solution to the minimization problem \eqref{eq:beck}
	\begin{prop}\label{prop:flow_formula}   Let  $\mu$ and $\nu$ be  two  probability measures in $\PP(\XX)$. Let $G=T$ be a rooted tree and consider OT with ground cost $d_T$. A solution $f_T$ of the Beckmann problem \eqref{eq:beck} (i.e. an optimal flow) is given by
		\begin{eqnarray*}
			f_{T}(x,x^{+}) & =  & \Xi_{T}^{+}(x), \\
			f_{T}(x^{+},x) & =  & \Xi_{T}^{-}(x),
		\end{eqnarray*}
		for every $x\in \XX\setminus \{r\}$.
	\end{prop}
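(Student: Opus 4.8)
The plan is to verify that the proposed flow $f_T$ is feasible for the Beckmann problem~\eqref{eq:beck} and that its cost equals $K_{d_T}(\mu,\nu)$. Since~\eqref{eq:beck} already asserts that the optimal value of the Beckmann problem equals $K_{d_T}(\mu,\nu)$, exhibiting a feasible flow attaining this value suffices to conclude optimality, so no direct optimality argument (e.g.\ constructing a certifying dual object) is needed.

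First I would check feasibility. Non-negativity is immediate, since $f_T(x,x^{+})=\Xi_{T}^{+}(x)\ge 0$ and $f_T(x^{+},x)=\Xi_{T}^{-}(x)\ge 0$ by definition of the positive and negative parts. The substantive point is the divergence constraint $\text{div}(f_T)=\mu-\nu$. Fix a vertex $x\neq r$. The oriented edges of $T$ incident to $x$ are $(x,x^{+})$ and $(x^{+},x)$, together with $(x,y)$ and $(y,x)$ for each $y\in\child(x)$; the flow on a child edge is read from the defining formula by swapping the parent/child roles, namely $f_T(x,y)=\Xi_{T}^{-}(y)$ and $f_T(y,x)=\Xi_{T}^{+}(y)$ (using $y^{+}=x$). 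Collecting outgoing minus incoming contributions gives
\begin{align*}
\text{div}(f_T)(x) &= \Xi_{T}^{+}(x)+\sum_{y\in\child(x)}\Xi_{T}^{-}(y)-\Xi_{T}^{-}(x)-\sum_{y\in\child(x)}\Xi_{T}^{+}(y)\\
&= \Xi_{T}(x)-\sum_{y\in\child(x)}\Xi_{T}(y),
\end{align*}
which equals $\xi(x)=\mu(x)-\nu(x)$ by the identity~\eqref{eq:xiXi}. At the root the term $(x,x^{+})$ is absent, so $\text{div}(f_T)(r)=-\sum_{y\in\child(r)}\Xi_{T}(y)$; since $\Xi_{T}(r)=\sum_{y\in\XX}\xi(y)=0$ and $\Xi_{T}(r)=\xi(r)+\sum_{y\in\child(r)}\Xi_{T}(y)$, this again equals $\xi(r)$. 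Hence $f_T\in\RR_+^{\Vec{\ee}_T}$ satisfies the constraint.

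It then remains to evaluate the cost. Using the symmetry $d_T(x^{+},x)=d_T(x,x^{+})$ and $\Xi_{T}^{+}(x)+\Xi_{T}^{-}(x)=|\Xi_{T}(x)|$, I would compute
\begin{align*}
\sum_{(x,y)\in\Vec{\ee}_T} d_T(x,y)\,f_T(x,y) &= \sum_{x\in\XX\setminus\{r\}} d_T(x,x^{+})\bigl(\Xi_{T}^{+}(x)+\Xi_{T}^{-}(x)\bigr)\\
&= \sum_{x\in\XX\setminus\{r\}} d_T(x,x^{+})\,|\Xi_{T}(x)|,
\end{align*}
which is precisely $K_{d_T}(\mu,\nu)$ by the closed-form expression~\eqref{eq:K-dist_tree1}. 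Combined with the feasibility established above and the identity~\eqref{eq:beck}, this shows that $f_T$ is an optimal flow.

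I do not expect a genuine obstacle: the argument is a direct verification rather than an optimization. The only points demanding care are the correct enumeration of the oriented edges incident to each vertex (in particular reading the flow on a child edge by swapping the parent/child roles in the defining formula) and the separate treatment of the root, where the zero-mass property of $\xi=\mu-\nu$ is exactly what closes the divergence computation.
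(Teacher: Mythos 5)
Your proposal is correct and follows essentially the same route as the paper's proof: verify non-negativity, check the divergence constraint at each non-root vertex via the identity $\Xi_T(x)-\sum_{y\in\child(x)}\Xi_T(y)=\xi(x)$ (treating the root separately using $\Xi_T(r)=0$), and then compute the cost as $\sum_{x\neq r} d_T(x,x^{+})\,|\Xi_T(x)|=K_{d_T}(\mu,\nu)$, concluding optimality from the known value of the Beckmann problem. No discrepancies to note.
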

	\begin{proof}
		The flow $f_T$ is by definition positive and by definition of $\Xi_T$ and since $\Xi_T(x) = \Xi^+_T(x) -  \Xi^-_T(x)$,   one has that,  for all $x \in \XX  \setminus \{r\}$,
		\begin{eqnarray*}
			\text{div}(f_T)(x) & = & f_T(x,x^+) +  \sum_{y \in  \child(x)} f_T(y^+,y) - f_T(x^+,x) - \sum_{y \in  \child(x)} f_T(y,y^+) \\
			& = &  \Xi^+_T(x) -  \Xi^-_T(x) -  \sum_{y \in  \child(x)} \Xi^+_T(y) -  \Xi^-_T(y) \\
			& =  & \Xi_T(x) - \sum_{y \in  \child(x)} \Xi_T(y) = \xi(x) = \mu(x) - \nu(x).  
		\end{eqnarray*}
		Moreover, using the fact that $\Xi_T(r) = 0$, one also has that
		\begin{eqnarray*}
			\text{div}(f_T)(r) = \sum_{y \in  \child(r)} \left(f_T(y^+,y)  -  f_T(y,y^+)\right) = \Xi_T(r) - \sum_{y \in  \child(r)} \Xi_T(y) =\mu(r) - \nu(r).
		\end{eqnarray*}
		Therefore, the divergence of $f_T$ satisfies the constraint $\text{div}(f_T) = \mu- \nu$. Finally, since the distance $d_T$ is symmetric and given that $|\Xi_T(x) | = \Xi^+_T(x)+ \Xi^-_T(x) $, one has that
		\begin{eqnarray*}
			\sum_{(x,y) \in \ee_T} d_T(x,y) f_T(x,y)  & = &  \sum_{x \in \XX \setminus \{r\}} d_T(x,x^+) f_T(x,x^+) +   \sum_{x \in \XX \setminus \{r\}} d_T(x^+,x) f_T(x^+,x) \\
			& = &  \sum_{x \in \XX \setminus \{r\}} d_T(x,x^+) (\Xi^+_T(x)  + \Xi^-_T(x)  ) \\
			& = &  \sum_{x \in \XX \setminus \{r\}} d_T(x,x^+) |\Xi_T(x) | = K_{d_T}(\mu,\nu), 
		\end{eqnarray*}
		which proves the assertion.
	\end{proof}
	
	\begin{rem}
		By similar arguments, when $G=T$ is a rooted tree, a solution of the minimization problem \eqref{eq:AE} involved in the Definition \ref{def:AEnorm} of the Arens-Eells norm of $\xi = \mu - \nu$ is given by
		$$
		a(x,x^+) = \Xi_T(x) \text{ and } a(x,y) = 0 \text{ when } y \neq x^+,
		$$
		for all $x \in \XX \setminus \{r\}$.
	\end{rem}

	Comparing the expression of the optimal flow $f_T$ in Proposition \ref{prop:flow_formula} for the Beckmann problem with the one of the optimal coupling $\gamma_T$ given in Proposition \ref{prop:OT_formula}, we see that they coincide outside of the diagonal of $\gamma_T$. Therefore, under the restrictive Condition \eqref{eq:condTast}, a solution of the Beckmann problem when $G=T$ immediately provides the values of the off-diagonal entries of an optimal transport plan. But this is restrictive since  transport plans naturally translate into flows but they are generally  not  equal. More formally, observe that a transport plan is indexed by $\XX \times \XX$,
which can be interpreted as the set of oriented edges of the complete graph
$K_{\XX}$, whereas a flow is indexed by the set $\vec{\ee}_{G}$ of oriented
edges of the graph $G$. From a given feasible plan $\gamma$, one can create a flow $f_\gamma$, in the case of a tree $T$, by making the sum over all the mass sent over an oriented edge in this direction by $\gamma$, i.e. for every oriented edge $\vec{e} = (u,v)$ define:
\begin{equation}
f_\gamma(u,v) = \sum_{(x,y) : (u,v) \in P_{x,y}}\gamma(x,y). \label{eq:flow_gamma}
\end{equation}

\subsection{ Relationships between OT problems with ground cost either $d_G$ or $d_{T^*}$}

Recall that we denote by $T_\ast$ an optimal spanning tree in $\ST$ satisfying
$
K_{d_G}(\mu,\nu) = K_{d_{T_\ast}}(\mu,\nu),
$
which implies the following equalities
\begin{equation}
K_{d_G}(\mu,\nu) = \sum_{x, y \in \XX} d_G(x,y) \gamma_G(x,y) =  \sum_{x, y \in \XX} d_{T_\ast}(x,y) \gamma_{T_\ast}(x,y) = K_{d_{T_\ast}}(\mu,\nu), \label{equalityOTplans}
\end{equation}
and
\begin{equation}
K_{d_G}(\mu,\nu) = \sum_{x \in \XX} u_{G}(x) (\mu(x) - \nu(x)) =  \sum_{x \in \XX} u_{T_\ast}(x) (\mu(x) - \nu(x)) = K_{d_{T_\ast}}(\mu,\nu), \label{equalityOTpotentials}
\end{equation}
where $\gamma_G$ (resp.\ $\gamma_{T_\ast}$) and $u_{G}$ (resp.\ $u_{T_\ast}$) denote an optimal transport plan and an optimal dual potential for OT with ground cost $d_G$ (resp.\ $d_{T\ast}$).

We now investigate the relationships between the optimal couplings $\gamma_G$ and $\gamma_{T_\ast}$, as well as between the corresponding potentials $u_G$ and $u_{T_\ast}$. To this end, we first need to introduce the notion of a geodesic graph.

\begin{definition}
    The geodesic graph of $G$ is the spanning graph containing all the edges belonging to a path with minimum distance, called geodesic path, between two vertices of $G$.
\end{definition}  

 The geodesic graph of a given graph $G$  together with the minimum cost paths for every pair of vertices can be obtained from the Floyd-Warshall algorithm \cite{Floyd1962,Warshall1962} which takes $O(N^3)$ where $N$ is the number of vertices of the graph \cite{Cormen2009}.
 
For some specific cases of measures $\mu$ and $\nu$, Proposition \ref{prop:OT_formula} gives a way to find an optimal coupling for the K-distance with ground cost $d_G$ from the knowledge of the optimal spanning tree $T_\ast$ solving    \eqref{eq:optimST}. However, given two arbitrary measures $\mu$ and $\nu$,  it is generally not clear whether $\gamma_{T_\ast}$ in \eqref{equalityOTplans} is an optimal plan for OT with ground cost $d_G$. The following result gives a positive answer to this question.

\begin{thm}\label{thm:plan}
    Let $\mu$ and $\nu$ be two probability measures in $\PP(\XX)$. Consider $T_\ast$ a spanning tree realizing the minimum in the  optimisation problem  \eqref{eq:optimST}. Then, one has that $\gamma_{T_\ast}$ is also an optimal plan associated to OT with ground cost $d_G$ meaning that
 $$
 K_{d_G}(\mu,\nu) =  \sum_{x, y \in \XX} d_{G}(x,y) \gamma_{T_\ast}(x,y).
 $$
Moreover, for every $(x,y) \in \XX \times \XX$ with $\gamma_{T_\ast}(x,y)>0$, one has that $d_G(x,y) = d_{T_\ast}(x,y)$, meaning that  the edges with positive transport in $T_\ast$ belong to the geodesic graph of $G$.
\end{thm}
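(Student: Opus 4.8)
The plan is to exploit the fact that $T_\ast$ is a spanning subgraph of $G$, which yields a one-sided comparison between the two ground metrics, and then to run a squeezing argument using the optimality of $T_\ast$ in \eqref{eq:optimST}. First I would establish the pointwise inequality $d_G(x,y) \le d_{T_\ast}(x,y)$ for all $(x,y) \in \XX \times \XX$. This is immediate: since $T_\ast \in \ST$ has all its edges in $\ee_G$, the unique path $P_{x,y}$ realizing $d_{T_\ast}(x,y)$ in the tree is also a path in $G$, and therefore the shortest-path distance $d_G(x,y)$ can only be smaller or equal.

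Next I would observe that $\gamma_{T_\ast} \in \Pi(\mu,\nu)$ is, in particular, a feasible plan for OT with ground cost $d_G$, so that $\sum_{x,y} d_G(x,y)\gamma_{T_\ast}(x,y) \ge K_{d_G}(\mu,\nu)$ by the definition \eqref{eq:OT_Kant}. In the other direction, combining the pointwise bound above with the nonnegativity $\gamma_{T_\ast} \ge 0$, the optimality of $\gamma_{T_\ast}$ for the ground cost $d_{T_\ast}$ as recorded in \eqref{equalityOTplans}, and the defining optimality of $T_\ast$ in \eqref{eq:optimST}, I would chain
$$
\sum_{x,y \in \XX} d_G(x,y)\gamma_{T_\ast}(x,y) \le \sum_{x,y \in \XX} d_{T_\ast}(x,y)\gamma_{T_\ast}(x,y) = K_{d_{T_\ast}}(\mu,\nu) = K_{d_G}(\mu,\nu).
$$
Combining the two displays squeezes the cost of $\gamma_{T_\ast}$ under $d_G$ to equal $K_{d_G}(\mu,\nu)$, which proves the first assertion that $\gamma_{T_\ast}$ is optimal for the ground cost $d_G$.

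For the second assertion I would exploit the equality case of the squeeze. The chain above forces $\sum_{x,y \in \XX}\bigl(d_{T_\ast}(x,y) - d_G(x,y)\bigr)\gamma_{T_\ast}(x,y) = 0$. Since every summand is a product of the nonnegative factor $d_{T_\ast}(x,y) - d_G(x,y) \ge 0$ and the nonnegative weight $\gamma_{T_\ast}(x,y) \ge 0$, each term must vanish individually; hence $d_G(x,y) = d_{T_\ast}(x,y)$ whenever $\gamma_{T_\ast}(x,y) > 0$. To conclude the geodesic statement I would note that when $\gamma_{T_\ast}(x,y) > 0$ the mass travels along the tree path $P_{x,y} \subset \ee_{T_\ast}$, whose length equals $d_{T_\ast}(x,y) = d_G(x,y)$; this certifies $P_{x,y}$ as a geodesic path of $G$, so each of its edges lies in the geodesic graph of $G$.

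I expect no serious obstacle, since the argument is a one-sided metric comparison feeding a squeeze. The only point requiring a little care is the geodesic interpretation at the end, namely arguing that equality of the two distances on a transported pair upgrades the tree path $P_{x,y}$ into a genuine $G$-geodesic, so that the individual edges along it (not merely its endpoints) are certified as geodesic edges of $G$.
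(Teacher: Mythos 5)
Your proposal is correct and follows essentially the same route as the paper: feasibility of $\gamma_{T_\ast}$ gives the lower bound, the pointwise inequality $d_G \le d_{T_\ast}$ together with $K_{d_{T_\ast}}(\mu,\nu) = K_{d_G}(\mu,\nu)$ gives the upper bound, and the equality case of the resulting vanishing nonnegative sum yields $d_G(x,y) = d_{T_\ast}(x,y)$ on the support. The paper phrases this as a single chain starting from $0 \le \sum_{x,y} \gamma_{T_\ast}(x,y) d_G(x,y) - K_{d_G}(\mu,\nu)$ rather than a two-sided squeeze, but the content is identical, and your closing remark on why the tree path is thereby certified as a $G$-geodesic is a (correct) elaboration the paper leaves implicit.
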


\begin{proof}
By definition of the $K$-distance, the fact  $\gamma_{T_\ast}$ is an admissible plan and Equatily \eqref{equalityOTplans}, it follows that
    \begin{align*}
        0 \leq \sum_{x,y\in \XX} \gamma_{T_\ast}(x,y)d_G(x,y) - K_{d_G}(\mu,\nu) 
        &= \sum_{x,y\in \XX} \gamma_{T_\ast}(x,y)d_G(x,y) - K_{d_{T_\ast}}(\mu,\nu)\\
        &= \sum_{x,y\in \XX} \gamma_{T_\ast}(x,y)d_G(x,y) - \sum_{x,y\in \XX} \gamma_{T_\ast}(x,y)d_{T_\ast}(x,y)\\
        &= \sum_{x,y\in \XX} \gamma_{T_\ast}(x,y)\left(d_G(x,y)-d_{T_\ast}(x,y)\right).\\
    \end{align*}
Since $\gamma_{T_\ast}(x,y)\geq 0$ and $d_G(x,y)\leq d_{T_\ast}(x,y)$ (as $T_\ast$ is a spanning tree of $G$),  the above inequality implies that $ \sum_{x,y\in \XX} \gamma_{T_\ast}(x,y)\left(d_G(x,y)-d_{T_\ast}(x,y)\right) = 0$. Therefore, one obtains that $K_{d_G}(\mu,\nu) = \sum_{x,y\in \XX} \gamma_{T_\ast}(x,y)d_G(x,y)$. Moreover,  for every $(x,y) \in \XX \times \XX$ with $\gamma_{T_\ast}(x,y)>0$, one necessarily has that $d_G(x,y)-d_{T_\ast}(x,y) = 0$ which proves the last statement of Theorem \ref{thm:plan}. 
\end{proof}

The following result complements Theorem \ref{thm:plan} for the dual formulation of the $K$-distance giving a relationship between Kantorovich potentials for optimal transport with ground cost either $d_G$ or $d_{T_\ast}$.

\begin{thm} \label{prop:uGuT}
 Let $\mu$ and $\nu$ be two probability measures in $\PP(\XX)$. Consider $T_\ast$ a spanning tree realizing the minimum in the  optimisation problem  \eqref{eq:optimST}. Then, one has that any Kantorovich potential $u_G$ belongs to $\LipTast$ and realizes $K_{d_{T_\ast}}(\mu,\nu)$ that is
 $$
 K_{d_{T_\ast}}(\mu,\nu) = \max_{u \in \LipTast} \sum_{x \in \XX} u(x) (\mu(x) - \nu(x)) = \sum_{x \in \XX} u_{G}(x) (\mu(x) - \nu(x)). 
 $$
 Moreover,  for every $(x,y) \in \XX \times \XX$ with $\gamma_{T_\ast}(x,y)>0$,  one has that 
\[
    |u_G(x)-u_G(y)|=d_G(x,y)=d_{T_\ast}(x,y).
\]
\end{thm}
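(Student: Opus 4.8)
The plan is to prove the two parts of Theorem~\ref{prop:uGuT} separately, leaning on the duality machinery already available. For the first part, I would start from the chain of equalities in~\eqref{equalityOTpotentials}, which gives $K_{d_G}(\mu,\nu) = \sum_{x} u_G(x)(\mu(x)-\nu(x))$ for any optimal potential $u_G \in \Lip$, together with the optimality $K_{d_G}(\mu,\nu) = K_{d_{T_\ast}}(\mu,\nu)$ coming from the fact that $T_\ast$ solves~\eqref{eq:optimST}. The key observation is that since $T_\ast$ is a spanning tree of $G$, every edge $\{x,y\} \in \ee_{T_\ast}$ is also an edge of $G$, and for such edges $d_{T_\ast}(x,y) = w(x,y) = d_G(x,y)$. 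Therefore the Lipschitz constraint that $u_G$ satisfies on the edges of $G$ (via Remark~\ref{rmq:Lip}) immediately implies $|u_G(x)-u_G(y)| \leq d_G(x,y) = d_{T_\ast}(x,y)$ for all $\{x,y\} \in \ee_{T_\ast}$, so by Remark~\ref{rmq:Lip} applied to the tree metric, $u_G \in \LipTast$.

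Once $u_G \in \LipTast$ is established, I would compare objective values. By weak duality for the dual problem~\eqref{eq:dual_Kant} with ground cost $d_{T_\ast}$, any $u \in \LipTast$ gives $\sum_{x} u(x)(\mu(x)-\nu(x)) \leq K_{d_{T_\ast}}(\mu,\nu)$. But plugging in $u = u_G$ yields exactly $\sum_{x} u_G(x)(\mu(x)-\nu(x)) = K_{d_G}(\mu,\nu) = K_{d_{T_\ast}}(\mu,\nu)$, which saturates this bound. Hence $u_G$ is a maximizer for the $d_{T_\ast}$-problem, establishing the displayed equation. The clean point here is that $u_G$ is feasible for the tighter tree problem precisely because the tree metric dominates $d_G$, while its objective value is unchanged because only the simplex-weighted sum $\sum_x u_G(x)(\mu(x)-\nu(x))$ appears (not the metric directly).

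For the second part, I would invoke complementary slackness (Proposition~\ref{prop:complementary}). Since $u_G$ is now an optimal dual potential for OT with ground cost $d_{T_\ast}$ and $\gamma_{T_\ast}$ is an optimal transport plan for $d_{T_\ast}$ (by construction in Proposition~\ref{prop:OT_formula}, or directly since $T_\ast$ realizes the minimum), the forward direction of Proposition~\ref{prop:complementary} applied with the ground cost $d_{T_\ast}$ gives that $\gamma_{T_\ast}(x,y)>0$ implies $u_G(x)-u_G(y) = d_{T_\ast}(x,y)$, hence $|u_G(x)-u_G(y)| = d_{T_\ast}(x,y)$. The remaining equality $d_{T_\ast}(x,y) = d_G(x,y)$ on the support of $\gamma_{T_\ast}$ is not something I would reprove: it is exactly the last statement of Theorem~\ref{thm:plan}, which I would cite directly. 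Combining these two facts gives the chain $|u_G(x)-u_G(y)| = d_{T_\ast}(x,y) = d_G(x,y)$.

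The main obstacle I anticipate is a subtle one about \emph{which} optimal plan to use in the complementary slackness step: the statement refers specifically to $\gamma_{T_\ast}$, so I must make sure that the pair $(u_G, \gamma_{T_\ast})$ is a pair of optimal solutions for the \emph{same} ground cost $d_{T_\ast}$ before applying Proposition~\ref{prop:complementary}. This requires being careful that $\gamma_{T_\ast}$ is optimal for the $d_{T_\ast}$-problem (which it is, as the optimal plan associated to the tree $T_\ast$) and that $u_G$ is optimal for that same problem (which is exactly what the first part establishes). The rest is routine, but this matching of ground costs is the conceptual crux and should be stated explicitly to avoid the temptation to apply slackness with $d_G$, where $\gamma_{T_\ast}$ is optimal (by Theorem~\ref{thm:plan}) but the positivity-of-cost argument is less direct.
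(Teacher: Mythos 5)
Your proposal is correct and follows essentially the same route as the paper: establish $u_G \in \LipTast$ from the domination $d_G \le d_{T_\ast}$, saturate the dual bound using $K_{d_G}(\mu,\nu) = K_{d_{T_\ast}}(\mu,\nu)$, and conclude via Theorem~\ref{thm:plan} together with complementary slackness applied for the ground cost $d_{T_\ast}$. One tiny imprecision: for an edge $\{x,y\}\in\ee_{T_\ast}$ you assert $d_G(x,y)=w(x,y)$, which need not hold (only $d_G(x,y)\le w(x,y)$ is guaranteed, since the weights are not assumed to satisfy a triangle inequality), but your argument only uses this inequality, so nothing breaks.
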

\begin{proof}
    Since any Kantorovich potential $u_G$ belongs to $\Lip$  one has that
    \[
        |u_G(x)-u_G(y)|\leq d_G(x,y)\quad \forall x,y\in \XX.
    \]
    But since the metrics $d_G$ and $d_{T_\ast}$ are defined from the same weight function $w$, one has that $d_G(x,y) \leq d_{T_\ast}(x,y)$ for every  $(x,y) \in \XX \times \XX$ implying that
    \[
        |u_G(x)-u_G(y)|\leq d_{T_\ast}(x,y)\quad \forall x,y\in \XX.
    \]
    meaning that $u_G$ belongs to $\LipTast$.
    Now, it follows that $u_G$ is also an Kantorovich potential of $K_{T_\ast}(\mu,\nu)$ since, from the dual formulation of the $K$-distance  $K_{d_{T_\ast}}$ and   Equality \eqref{equalityOTpotentials},  one has that
    \[
   \max_{u \in \LipTast} \sum_{x \in \XX} u(x) (\mu(x) - \nu(x)) =     K_{d_{T_\ast}}(\mu,\nu)=K_{d_G}(\mu,\nu) = \sum_{x\in \XX}u_G(x)(\mu(x)-\nu(x)).
    \]
    Finally, to prove the last statement,   we know from Theorem \ref{thm:plan} that $d_G(x,y) = d_{T_\ast}(x,y)$ whenever $\gamma_{T_\ast}(x,y)>0$. Moreover, since $\gamma_{T_\ast}$ is an optimal plan it follows from Proposition \ref{prop:complementary} that for every $x,y\in \XX$ such that $\gamma_{T_\ast}(x,y)>0$ one has
    $
        |u_G(x)-u_G(y)|= d_{T_\ast}(x,y) 
    $
    implying that $|u_G(x)-u_G(y)| =  d_{T_\ast}(x,y) =  d_G(x,y)$  whenever $\gamma_{T_\ast}(x,y)>0$.
\end{proof}

A positive result concerning the lifting of the Kantorovich potential $u_{T_\ast}$ with ground cost $d_{T_\ast}$ to a Kantorovich potential for the OT problem with ground cost $d_G$ is provided by the following corollary. This result is an immediate consequence of Theorem ~\ref{prop:uGuT}, since any Kantorovich potential for the OT problem with ground cost $d_G$ is also a Kantorovich potential for OT with ground cost $d_{T_\ast}$.

\begin{cor}\label{cor:un}
Let $T_\ast$ be an optimal spanning tree of $G$ achieving $K_{d_{T_\ast}}(\mu,\nu)=K_{d_{G}}(\mu,\nu)$. If the optimal potential $u_{T_\ast}$ which realizes $K_{d_{T_\ast}}(\mu,\nu)$ is unique (up to an additive constant), then necessarily the optimal potential $u_G$ of $K_{d_{G}}(\mu,\nu)$ is unique with $u_G = u_{T_\ast}$ and its explicit expression is given by
\begin{equation}\label{eq:pot_T}
 u_G(y) =  u_{T_\ast}(y) = \sum_{y \lesssim x \neq r} d_{T_\ast}(x,x^{+}) \sgn ( \Xi_{T_\ast}(x) ), \quad \mbox{ for all } y \in \XX.
\end{equation}
\end{cor}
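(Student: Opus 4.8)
The plan is to deduce the statement directly from Theorem~\ref{prop:uGuT} together with the uniqueness hypothesis, being careful to read the inclusion between the two sets of Kantorovich potentials in the right direction.

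First I would recall that, by LP duality for the dual formulation~\eqref{eq:dual_Kant}, the maximum over $\Lip$ is attained, so at least one Kantorovich potential $u_G$ for the OT problem with ground cost $d_G$ exists. This guarantees that the set we wish to describe is non-empty. Next I would invoke Theorem~\ref{prop:uGuT}, which asserts that any such $u_G$ lies in $\LipTast$ and satisfies $\sum_{x \in \XX} u_G(x)(\mu(x)-\nu(x)) = K_{d_{T_\ast}}(\mu,\nu)$. In other words, every Kantorovich potential for $d_G$ is \emph{also} a Kantorovich potential for $d_{T_\ast}$; this is the crucial inclusion. On the other hand, $u_{T_\ast}$ defined by~\eqref{eq:uast} is itself a Kantorovich potential for $d_{T_\ast}$, since it belongs to $\LipTast$ by Proposition~\ref{prop:ubar} and realizes $K_{d_{T_\ast}}(\mu,\nu)$ by~\eqref{eq:K-dist_tree2}.

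I would then bring in the uniqueness hypothesis. As the Kantorovich potential for $d_{T_\ast}$ is assumed unique up to an additive constant, every potential for $d_{T_\ast}$ equals $u_{T_\ast}$ up to a constant; in particular $u_G = u_{T_\ast} + c$ for some $c \in \RR$. Imposing the normalization $u_G(r) = u_{T_\ast}(r) = 0$ forces $c = 0$, so $u_G = u_{T_\ast}$. Uniqueness of $u_G$ (up to an additive constant) is then immediate: any two Kantorovich potentials for $d_G$ both belong to the single class $\{u_{T_\ast} + c : c \in \RR\}$, hence differ only by a constant. The explicit formula~\eqref{eq:pot_T} is then precisely the expression~\eqref{eq:uast} of $u_{T_\ast}$.

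The proof carries no genuine obstacle, as the result is a direct corollary of Theorem~\ref{prop:uGuT}; the one point that requires care is the logical direction of the argument. We only have, and only need, the inclusion of the $d_G$-potentials into the $d_{T_\ast}$-potentials. Combined with the existence of a $d_G$-potential and the assumed uniqueness of the $d_{T_\ast}$-potential, this pins the former set down to the single class generated by $u_{T_\ast}$; the reverse inclusion is neither available in general nor required here.
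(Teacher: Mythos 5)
Your argument is correct and coincides with the paper's own proof, which likewise deduces the corollary directly from Theorem~\ref{prop:uGuT} via the observation that every Kantorovich potential for the ground cost $d_G$ is also one for $d_{T_\ast}$, and then applies the assumed uniqueness of the latter. Your additional care about the direction of the inclusion and the normalization at the root only makes explicit what the paper leaves implicit.
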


Finally, if the pair $(\mu,\nu)$ is weakly non-degenerate, the following result is the main contribution of this paper that establishes a connection between an optimal spanning tree  minimizing~\eqref{min:ST} and the Kantorovich potential $u_G$.

\begin{thm} \label{theo:unique-potential}
Suppose that the pair of probability measures $\mu, \nu \in \PP(\XX)$ is weakly non-degenerate. Then, there exists a unique Kantorovich potential $u_G$ (up to an additive constant), whose explicit explicit expression is given by \eqref{eq:pot_T}.
\end{thm}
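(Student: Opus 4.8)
The plan is to reduce the statement to the uniqueness of the Kantorovich potential for the tree metric $d_{T_\ast}$, and then transfer this to $d_G$ via Corollary~\ref{cor:un}. I would fix an optimal spanning tree $T_\ast \in \ST$ realizing the minimum in \eqref{eq:optimST}, rooted at some $r$. By Corollary~\ref{cor:un}, it suffices to prove that the optimal dual potential for OT with ground cost $d_{T_\ast}$ is unique up to an additive constant; the explicit formula \eqref{eq:pot_T} and the identity $u_G = u_{T_\ast}$ then follow at once from that corollary, the unambiguity of the signs $\sgn(\Xi_{T_\ast}(x))$ under weak non-degeneracy being guaranteed by Remark~\ref{rem:xi} (which shows $\Xi_{T_\ast}(x)\neq 0$ for all $x\neq r$).

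To establish uniqueness of the $d_{T_\ast}$-potential, I would first pick an optimal transport plan $\gamma$ for the ground cost $d_{T_\ast}$ that is a vertex of the polytope $\Pi(\mu,\nu)$; such a vertex optimizer exists because \eqref{eq:OT_Kant} is a linear program. The crucial structural input is Proposition~\ref{prop:unique-plan}: under weak non-degeneracy, the support of $\gamma$, once orientations and loops are discarded, is a spanning tree $S$ of the complete graph $K_{\XX}$, i.e.\ a connected graph on all of $\XX$ with exactly $N-1$ proper edges. Next, letting $u$ be any optimal dual potential for $d_{T_\ast}$, I would apply complementary slackness \eqref{eq:complementary} (Proposition~\ref{prop:complementary}) to the optimal pair $(\gamma,u)$: every proper edge $(x,y)\in\supp(\gamma)$ forces $u(x)-u(y)=d_{T_\ast}(x,y)$. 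Because $S$ is a spanning tree of $K_{\XX}$, these $N-1$ equations prescribe the increment of $u$ across every edge of a connected spanning subgraph, so walking along paths in $S$ from a fixed base vertex determines every value $u(x)$ once $u$ at the base point is chosen. Hence $u$ is unique up to that single additive constant, which is exactly the asserted uniqueness of the $d_{T_\ast}$-potential.

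Finally, Corollary~\ref{cor:un}—which itself rests on Theorem~\ref{prop:uGuT}, stating that every Kantorovich potential $u_G$ for $d_G$ is automatically a potential for $d_{T_\ast}$—upgrades this to the full claim: a Kantorovich potential $u_G$ for $d_G$ exists (namely $u_{T_\ast}$, which is well defined under weak non-degeneracy), it is unique up to an additive constant, equals $u_{T_\ast}$, and is given by \eqref{eq:pot_T}. The step I expect to require the most care is the passage from ``$\supp(\gamma)$ is a spanning tree of $K_{\XX}$'' to uniqueness: one must resist using the edges of $T_\ast$ itself, since the optimal plan for $d_{T_\ast}$ may route mass along multi-edge paths of $T_\ast$ (as in Remark~\ref{rem:supps}) and its support need not coincide with $T_\ast$. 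It is precisely the spanning connectivity of $\supp(\gamma)$ inside $K_{\XX}$, rather than any direct relation to the edge set of $T_\ast$, that pins down the potential up to a constant.
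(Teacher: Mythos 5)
Your proposal is correct and follows essentially the same route as the paper's proof: reduce to the tree metric $d_{T_\ast}$ via Corollary~\ref{cor:un}, take a vertex optimal plan whose support is a spanning tree of $K_{\XX}$ by Proposition~\ref{prop:unique-plan}, and use complementary slackness along its $N-1$ edges to pin down the potential up to a constant, identifying it with $u_{T_\ast}$. Your closing caveat---that the relevant connectivity is that of $\supp(\gamma)$ in $K_{\XX}$, not of $T_\ast$ itself---is exactly the point the paper relies on when it invokes the tree structure of the support for the linear independence of the system.
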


\begin{proof}
Suppose that the pair of probability measures $\mu, \nu \in \PP(\XX)$ is weakly non-degenerate and that $G = T$ is a tree. Let $\gamma_T$ be any optimal transport plan that is a vertex of the transportation polytope, and let $u$ be any dual potential. By Proposition \ref{prop:unique-plan}, one has that  $\supp(\gamma_T)$ is a tree (up to loops) with exactly $N-1$ proper edges. Therefore, from Proposition \ref{prop:complementary},  one has that
\begin{align}\label{sys:opt}
    u(x) - u(y) = d_T(x,y) & \text{ for all }  x \neq y \text{ such that } \gamma_T(x,y) > 0,
\end{align}
which are $N-1$ linearly independent equations for $N$
unknown variables $(u(x))_{x \in \XX}$, meaning that the Kantorovich potential $u$ obtained from the knowledge of the support of $\gamma_T$ is unique up to an additive constant. We remark that the linear independence of the $N-1$ equations of the linear system \eqref{sys:opt} follows from the tree structure of  $\supp(\gamma_T)$. 

Now, from Proposition \ref{prop:ubar}, we know that $u_T$ defined by \eqref{eq:pot} is an optimal potential for the dual optimal transport problem with ground cost $d_T$, and it is thus a solution of the linear system \eqref{sys:opt} for any optimal transport plan $\gamma$ that is a vertex of $\Pi(\mu,\nu)$ by complementary slackness (Proposition \ref{prop:complementary}). Consequently, we have that $u = u_T$ (up to an additive constant). Hence, for $G = T$, one has that  $u_{T}$ is the unique Kantorovich potential associated to optimal transport with ground cost $d_{T}$ and the conclusion for general $G$  follows from Corollary \ref{cor:un}, which completes the proof.
\end{proof}

\subsection{Optimal transport plan for distances induced by a tree.}  \label{sec:OTplan}

Theorem~\ref{thm:plan} implies that constructing an optimal transport plan $\gamma_{T_\ast}$ from an optimal spanning tree $T_\ast$ yields an optimal transport plan for the Kantorovich distance associated with the ground cost $d_G$, for an arbitrary graph $G$. We therefore focus on constructing an algorithm to compute an optimal transport plan for a ground cost $d_T$ induced by a tree $T$, based on the following two principles:
\begin{itemize}
    \item[(i)] there is no loss of generality in maximizing the mass on the diagonal of the transport plan, as shown in Proposition~\ref{prop:coldiag};
    \item[(ii)] the sign of the imbalance cumulative function $\Xi_T(\cdot)$ along the paths of $T$ carrying positive transported mass plays a crucial role, as detailed in Proposition~\ref{prop:sign}.
\end{itemize}

We start discussing the restrictive condition \eqref{eq:condTast}. Let us consider OT with a ground cost $d_T$  as in Section \ref{sec:tree}. If the tree $T$ with root $r$ satisfies condition \eqref{eq:condTast}, then an optimal transport plan for this OT problem is given by Proposition \ref{prop:OT_formula}. However, the condition \eqref{eq:condTast} imposes that the sign of the function $\Xi_{T}(x) = \sum_{y \lesssim x} \left( \mu(y)-\nu(y) \right)$ alternates along the paths from leaves to the root $r$ in the tree $T$. This is a restrictive condition that basically imposes the measures $\mu$ and $\nu$ to be sufficiently close leading to an optimal transport plan whose support is restricted to the pairs $\{x,x\}$, $\{x,x^+\}$ and $\{x^+,x\}$ of edges in $\ee_{T}$. It is typically violated when $\mu = \delta_{X_1}$ is the dirac mesure at $x=X_1$ and $T$ is the line tree $X_1 \to X_2 \to \cdots \to X_N = r$, since $\Xi_{T}(X_i) = 1 - \sum_{j=1}^{i} \nu(X_j) \geq 0$ for all $1 \leq i \leq N$. We also present below a simple example (beyond the case $\mu = \delta_{X_1}$)  showing that Condition \eqref{eq:condTast} is not always satisfied.

\begin{exa}
	Consider the graph $G$ given in Figure \ref{fig:graphic} where the weight function $w$ is equal to one on each edge. For the given values of $\mu$ and $\nu$ displayed in Figure \ref{fig:graphic}, we claim that there is no rooted spanning tree $T$ of the weighted graph $G$  such that Condition \eqref{eq:condTast} is satisfied. 
	\begin{figure}[htbp]
		\centering
		\includegraphics{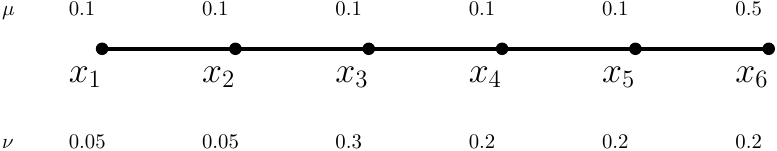}
		\caption{Values of the weights $\mu$ and $\nu$ with $N=6$. The graph $G$ is a line tree with 6 spanning trees which have the same structure with only the root placed at different vertices.}
		\label{fig:graphic}
	\end{figure}
	A short proof of this assertion is a follows. For a given rooted spanning tree $T$ of $G$, we recall that $$\Xi_T(x) = \sum_{y\lesssim x}\xi(y) = \sum_{y\lesssim x}\mu(y)-\nu(y).$$
	Notice that Condition \eqref{eq:condTast} is a relation between every node and its parent. Checking this condition translates into Table \ref{tab:example} (that gives the values of $\Xi_T$) as not having two consecutive entries in a row  with the same sign, which obviously holds on this example. 
	\begin{table}[h!]
		\begin{tabular}{|l|l|l|l|l|l|l|}
			\hline
			& $x_1$ & $x_2$ & $x_3$ & $x_4$ & $x_5$ & $x_6$  \\ \hline
			$\xi = \mu-\nu$ & 0.05  & 0.05 & -0.2 & -0.1 & -0.1 & -0.3 \\ \hline
			$\Xi_T$,  for $r=x_1$ & 0 & -0.05 & -0.1 & 0.1 & 0.2 & 0.3  \\ \hline
			$\Xi_T$,  for $r=x_2$ & 0.05 & 0 & -0.1 & 0.1 & 0.2 & 0.3 \\ \hline
			$\Xi_T$,  for $r=x_3$ & 0.05 & 0.1 & 0 & 0.1 & 0.2 & 0.3 \\ \hline
			$\Xi_T$,  for $r=x_4$ & 0.05 & 0.1 & -0.1 & 0 & 0.2 & 0.3 \\ \hline
			$\Xi_T$,  for $r=x_5$ & 0.05 & 0.1 & -0.1 & -0.2  & 0 & 0.3 \\ \hline
			$\Xi_T$,  for $r=x_6$ & 0.05 & 0.1 & -0.1 & -0.2 & -0.3 & 0 \\ \hline
		\end{tabular}
		\caption{Values of the function $(\Xi_T(x))_{x \in \XX}$ for the example of graph $G$ and measures $\mu$ and $\nu$ given in  Figure \ref{fig:graphic} for each rooted spanning tree $T$ of $G$.} \label{tab:example}
	\end{table}
\end{exa}

In what follows, we show that the key point to build an optimal transport plan in the general case, that is without imposing Condition \eqref{eq:condTast}, is to first maximize the mass on the diagonal Proposition~\ref{prop:coldiag} below, and then to use the imbalanced function $\Xi_T(\cdot)$ to fill its off-diagonal entries in a recursive manner as detailed in  Algorithm \ref{alg:tree_transport} below.

\subsubsection{ Maximizing the mass transported on the diagonal}

The maximization of the mass on the diagonal terms is inspired by the
Beckmann problem~\eqref{eq:beck}. Indeed, in this formulation, the behavior
at a node $x \in \XX$ is entirely determined by the sign of
$\mu(x)-\nu(x)$ through the divergence constraint imposed on admissible
flows. Intuitively, since mass can be retained at a node $x$ at zero cost,
it is natural to first remove the maximal possible amount of mass before
sending the remaining mass throughout the graph. Hence, a key result is the following proposition showing that one may reduce the complexity of constructing an optimal coupling by maximizing its diagonal.

\begin{prop}\label{prop:coldiag}
Consider the OT problem with ground cost $d_T$ between two  measures $\mu$ and $\nu$. Then, there always exists an 
optimal transport plan $\gamma_T$ satisfying
$$
\gamma_T(x,x)=\min\{\mu(x),\nu(x)\} \text{ for all } x \in \XX.
$$
\end{prop}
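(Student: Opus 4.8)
The plan is to exploit the fact that retaining mass at a node is free, since $d_T(x,x)=w(x,x)=0$, so that any optimal plan can be pushed onto its diagonal without raising the cost. First I would record the trivial upper bound: for any admissible $\gamma\in\Pi(\mu,\nu)$ the marginal constraints $\sum_{y\in\XX}\gamma(x,y)=\mu(x)$ and $\sum_{x'\in\XX}\gamma(x',x)=\nu(x)$ force $\gamma(x,x)\le\mu(x)$ and $\gamma(x,x)\le\nu(x)$, hence $\gamma(x,x)\le\min\{\mu(x),\nu(x)\}$ for every $x\in\XX$. Thus the quantity in the statement is exactly the largest diagonal entry that any feasible plan can carry, and it suffices to exhibit one optimal plan attaining this bound simultaneously at every $x$.

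Next I would single out a good candidate by an extremal argument. The set of optimal plans $\{\gamma\in\Pi(\mu,\nu)\;:\;\sum_{x,y\in\XX}d_T(x,y)\gamma(x,y)=K_{d_T}(\mu,\nu)\}$ is the intersection of the compact polytope $\Pi(\mu,\nu)$ with a supporting hyperplane, hence it is nonempty, convex and compact. Over this set I would maximize the linear functional $\gamma\mapsto\sum_{x\in\XX}\gamma(x,x)$; a maximizer $\gamma^{\ast}$ exists by compactness. The claim is then that this particular optimal plan already satisfies $\gamma^{\ast}(x,x)=\min\{\mu(x),\nu(x)\}$ for all $x$.

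The core of the proof is a local exchange argument by contradiction. Suppose $\gamma^{\ast}(x,x)<\min\{\mu(x),\nu(x)\}$ for some $x$. Then $\gamma^{\ast}(x,x)<\mu(x)$ produces a vertex $y\neq x$ with $\gamma^{\ast}(x,y)>0$ (mass leaves $x$), while $\gamma^{\ast}(x,x)<\nu(x)$ produces a vertex $z\neq x$ with $\gamma^{\ast}(z,x)>0$ (mass enters $x$). Setting $\epsilon=\min\{\gamma^{\ast}(x,y),\gamma^{\ast}(z,x)\}>0$, I would define $\gamma'$ by subtracting $\epsilon$ from the entries $(x,y)$ and $(z,x)$ and adding $\epsilon$ to the entries $(x,x)$ and $(z,y)$, leaving all other entries unchanged. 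A direct check on each affected row and column shows that $\gamma'$ still has marginals $\mu$ and $\nu$ and is nonnegative, while its cost changes by $\epsilon\bigl(d_T(z,y)-d_T(x,y)-d_T(z,x)\bigr)\le 0$ by the triangle inequality. Since $\gamma^{\ast}$ is optimal this increment cannot be strictly negative, so it vanishes and $\gamma'$ is again optimal; but $\sum_{x'\in\XX}\gamma'(x',x')\ge\sum_{x'\in\XX}\gamma^{\ast}(x',x')+\epsilon$ is strictly larger, contradicting the maximality of $\gamma^{\ast}$. Hence no such $x$ exists.

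I expect the only delicate points to be bookkeeping rather than conceptual. The main things to verify carefully are the admissibility and marginal invariance of the swap, together with the degenerate case $z=y$, in which both added entries fall on the diagonal and the cost increment becomes $-2\epsilon\,d_T(x,y)\le 0$, so the conclusion still holds (indeed the diagonal mass then grows by $2\epsilon$). One must also note that "the cost does not increase" combined with the optimality of $\gamma^{\ast}$ forces exact cost invariance, which is precisely what keeps $\gamma'$ inside the optimal set so that the contradiction bites. Finally I would remark that the argument uses only $d_T(x,x)=0$ and the triangle inequality, so it is in fact valid for any ground metric and is not specific to tree distances.
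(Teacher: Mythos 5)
Your proof is correct and rests on the same local exchange as the paper's: reroute $\epsilon$ units from the entries $(x,y)$ and $(z,x)$ onto $(x,x)$ and $(z,y)$, and use the triangle inequality plus optimality to conclude the cost is unchanged. The only difference is the wrapper: the paper applies this exchange iteratively to a given optimal plan and asserts termination, whereas you select a maximizer of $\gamma\mapsto\sum_{x\in\XX}\gamma(x,x)$ over the compact set of optimal plans and obtain a one-step contradiction, which cleanly avoids having to justify that the iteration stops.
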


\begin{proof}
Let $\gamma_T$ be an optimal transport plan for the OT problem  between two  measures $\mu$ and $\nu$ with ground cost $d_T$. Since $\gamma_T$ is an admissible transport plan, one necessarily has that  $\gamma_T(x,x) \leq \min\{\mu(x),\nu(x)\}$ for all $x \in \XX$, else the marginal constraints cannot be satisfied. Suppose that there exists $x \in \XX$ such that  $\gamma_T(x,x) < \min\{\mu(x),\nu(x)\}$. Since
 $
\sum_{y \in \XX} \gamma_T(x,y) = \mu(x)
$
and
$
\sum_{y \in \XX} \gamma_T(y,x) = \nu(x),
$
there necessarily exists $y_1 \neq x$ and $y_2 \neq x$ such that $\gamma_T(x,y_1) > 0$ and $\gamma_T(y_2,x) > 0$. Now, let
$$
m = \min\left(\gamma_T(x,y_1),\gamma_T(y_2,x),\delta_x \right) \quad \text{with} \quad \delta_x = \min\{\mu(x),\nu(x)\} - \gamma_T(x,x),
$$
and construct the plan $\tilde{\gamma}_T$ having all its entries equal to those of $\gamma_T$ except for the following ones:
\begin{equation} \label{eq:gamma_tilde}
\left\{
\begin{aligned}
\tilde{\gamma}_T(x,y_1) & =  \gamma_T(x,y_1) - m, \\
\tilde{\gamma}_T(y_2,x) & =  \gamma_T(y_2,x) - m, \\
\tilde{\gamma}_T(x,x) & =  \gamma_T(x,x) + m,  \\
\tilde{\gamma}_T(y_2,y_1) & =  \gamma_T(y_2,y_1) + m.
\end{aligned}
\right.
\end{equation}
By the definition of $m$, all the entries of the plan $\tilde{\gamma}_T$ are positive, and it can be easily checked that it satisfies the marginal constraints  $
\sum_{y \in \XX} \tilde{\gamma}_T(u,y) = \mu(u)
$
and
$
\sum_{y \in \XX} \tilde{\gamma}_T(y,u) = \nu(u),
$
for all $u \in \XX$. Hence,  $\tilde{\gamma}_T$ is an admissible transport plan. Then, the difference between the cost of the plans  $\gamma_T$ and  $\tilde{\gamma}_T$ has to be negative since $\gamma_T$ is an optimal transport plan and satisfies
\begin{eqnarray*}
 0 \geq \sum_{x, y \in \XX} d_T(x,y) \gamma_T(x,y) &-& \sum_{x, y \in \XX} d_T(x,y) \tilde{\gamma}_T(x,y)  \\
 & = & m\left(d_T(x,y_1)  +  d_T(y_2,x)  - d_T(x,x)  - d_T(y_2,y_1) \right) \\
 & = &  m\left(d_T(x,y_1)  +  d_T(y_2,x) - d_T(y_2,y_1) \right). 
\end{eqnarray*}
Combining the above inequality with the triangle inequality we thus obtain that
\begin{align}\label{eq:passage}
    d_T(x,y_1)  +  d_T(y_2,x) = d_T(y_2,y_1),
\end{align}
meaning that the node $x$ belongs to $P_{y_1,y_2}$, the unique path connecting $y_1$ and $y_2$ in $T$.
Consequently, the transport costs of $\tilde{\gamma}_T$ and $\gamma_T$ are equal, and  we thus deduce that $\tilde{\gamma}_T$ is also an optimal transport plan with
$$
\min\{\mu(x),\nu(x)\} \geq \tilde{\gamma}_T(x,x) > \gamma_T(x,x) 
$$
since $m \geq \delta_x > 0$. Hence, until there exists $x  \in \XX$ with $\tilde{\gamma}_T(x,x) <  \min\{\mu(x),\nu(x)\}$, we may repeat this construction by modifying two rows and two columns of  $\tilde{\gamma}_T$ as in \eqref{eq:gamma_tilde}, and we finally obtain an optimal plan that satisfies $\tilde{\gamma}_T(x,x) = \min\{\mu(x),\nu(x)\}$ for all $x \in \XX$, which completes the proof. 
\end{proof}

\begin{rem}
From the proof of Proposition~\ref{prop:coldiag}, it follows that every optimal transport plan $\gamma_T$ admits a canonical representative $\tilde{\gamma}_T$ that maximizes the mass retained at each vertex. This naturally induces an equivalence relation on the set of OT plans, where two plans are equivalent if they share the same canonical representative. Proposition~\ref{prop:coldiag} also plays a key role in the implementation of Algorithm~\ref{alg:tree_transport} described in Section~\ref{sec:dynprog}, as it justifies the construction of an optimal transport plan that maximizes the mass on the diagonal.
\end{rem}

\begin{rem}
	It is easy to see that any solution $\gamma$ satisfying $\gamma(x,x) = \min\{\mu(x),\nu(x)\}$ is only a sender of mass or a receiver of mass. In other words there is a line or a column on $\gamma$ which is equal to the zero vector with exception of the diagonal term.
\end{rem}

\subsubsection{Study of the sign of $\Xi$.}

The following proposition characterizes the evolution of the signs of the imbalance cumulative function $\Xi_{T}(\cdot)$ along paths where a positive mass is sent in a tree.

\begin{prop} \label{prop:sign}
 Let $(x,y) \in \XX \times \XX$ with $x \neq y$ such that $\gamma_{T}(x,y)>0$, and denote by $x_0,x_1,\ldots,x_{m}$  the  unique path  in $T$ from $x$ to $y$ (with $x_0 = x$ and $x_m=y$). Then, the sign of the function $\Xi_{T}$ satisfies:
\begin{itemize}
\item[(i)]  if $x_{i} = x_{i-1}^+$ then $ \Xi_{T}(x_{i-1}) > 0$
\item[(ii)]  if $x_{i-1} = x_{i}^+$ then $ \Xi_{T}(x_{i}) < 0$.
\end{itemize}
 \end{prop}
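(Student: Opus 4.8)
The plan is to read $\Xi_T(z)$, for any vertex $z \neq r$, as the \emph{net flow} that the optimal plan $\gamma_T$ sends across the tree edge $\{z,z^+\}$ in the direction $z \to z^+$, and then to observe that the pair $(x,y)$ drives a definite amount of mass across each edge of the path $P_{x,y}$ in the direction of travel. Throughout I use that $\gamma_T$ is an optimal transport plan for the ground cost $d_T$, so that $\supp(\gamma_T)$ is $d_T$-cyclically monotone.

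First I would record a cut identity. Removing $\{z,z^+\}$ disconnects $T$ into the subtree $T_z$ (containing $z$) and its complement $\XX\setminus T_z$ (containing $z^+$ and $r$). Summing the marginal constraints over $T_z$ gives
\[
\mu(T_z) = \sum_{a,b \in T_z} \gamma_T(a,b) + \sum_{\substack{a \in T_z \\ b \notin T_z}} \gamma_T(a,b), \qquad \nu(T_z) = \sum_{a,b \in T_z} \gamma_T(a,b) + \sum_{\substack{a \notin T_z \\ b \in T_z}} \gamma_T(a,b),
\]
so that $\Xi_T(z) = \mu(T_z) - \nu(T_z)$ equals the mass $\gamma_T$ sends out of $T_z$ minus the mass it sends into $T_z$. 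Since a pair $(a,b)$ with exactly one endpoint in $T_z$ must route its mass through the unique gateway $\{z,z^+\}$, this is exactly the gross flow in direction $z\to z^+$ minus the gross flow in direction $z^+\to z$.

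The crucial step, which I expect to be the main obstacle, is to show that an optimal plan never carries positive mass across a single edge in both directions, which is what upgrades the net-flow statement to a \emph{strict} sign. I would argue by cyclical monotonicity: if there were pairs $(a,b)$ and $(c,d)$ in $\supp(\gamma_T)$ with $a,d \in T_z$ and $b,c \notin T_z$, then reassigning to $(a,d)$ and $(c,b)$ keeps both endpoints of each new pair on the same side of the cut, so by the triangle inequality
\[
d_T(a,d) + d_T(c,b) \leq d_T(a,b) + d_T(c,d) - 2\,w(z,z^+) < d_T(a,b) + d_T(c,d),
\]
where the last inequality uses $w(z,z^+)>0$ for a proper tree edge. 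This contradicts the $d_T$-cyclical monotonicity of $\supp(\gamma_T)$. Hence at most one direction carries positive flow across $\{z,z^+\}$, and the net flow equals the gross flow in that single direction; in particular $\Xi_T(z)>0$ whenever some mass crosses $z\to z^+$, and $\Xi_T(z)<0$ whenever some mass crosses $z^+\to z$.

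It then remains to apply this edgewise. Because $\gamma_T(x,y)>0$, the pair $(x,y)$ routes a positive amount of mass along $P_{x,y}$, so every edge $\{x_{i-1},x_i\}$ carries at least $\gamma_T(x,y)>0$ in the direction $x_{i-1}\to x_i$, forcing this to be the unique positive direction for that edge. In case (i), where $x_i = x_{i-1}^+$, this is the direction $z\to z^+$ with $z=x_{i-1}$, giving $\Xi_T(x_{i-1})>0$; in case (ii), where $x_{i-1}=x_i^+$, it is the direction $z^+\to z$ with $z=x_i$, giving $\Xi_T(x_i)<0$. This establishes both assertions.
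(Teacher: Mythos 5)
Your proof is correct, but it takes a genuinely different route from the paper's. The paper argues through duality: it invokes the explicit optimal potential $u_{T}$ of Proposition \ref{prop:ubar}, applies complementary slackness (Proposition \ref{prop:complementary}) to get $u_T(x)-u_T(y)=d_T(x,y)$, telescopes this difference along the path, and observes that the resulting signed sum of edge lengths can only match $d_T(x,y)=\sum_i d_T(x_{i-1},x_i)$ if every $\sgn(\Xi_T(\cdot))$ takes the required value. You instead work entirely on the primal side: you identify $\Xi_T(z)=\mu(T_z)-\nu(T_z)$ with the net flow across the cut edge $\{z,z^+\}$ via the marginal constraints, and you prove a "no two-way traffic" lemma by $d_T$-cyclical monotonicity --- swapping two crossing pairs saves exactly $2\,w(z,z^+)>0$, and your inequality $d_T(a,d)+d_T(c,b)\le d_T(a,b)+d_T(c,d)-2\,w(z,z^+)$ is correct because in a tree any path between the two sides of the cut must traverse that edge, while paths between vertices on the same side need not. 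Your approach is more self-contained (it does not require the closed-form potential) and makes the strict inequalities transparent, since $|\Xi_T(z)|\ge\gamma_T(x,y)>0$ directly; by contrast, the paper's sign-matching argument needs a little care with the convention $\sgn(0)=\pm1$ to rule out $\Xi_T=0$. Your intermediate lemma is essentially the one-directional-flow property that the paper establishes separately for the output of Algorithm \ref{alg:tree_transport} in Proposition \ref{prop:optim_plan} and illustrates in Figure \ref{fig_lem}; you obtain it here for an arbitrary optimal plan, which is a small bonus. The trade-off is that the paper's proof is shorter given that $u_T$ and Proposition \ref{prop:ubar} are already in place, whereas yours re-derives the flow interpretation from scratch.
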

 \begin{proof}
Consider $(x,y) \in \XX \times \XX$  with $x \neq y$ and $\gamma_{T}(x,y)>0$. Let $x_0,x_1,\ldots,x_{m}$ be the  unique path  in $T$ from $x$ to $y$ with $x_0=x$ and $x_m = y$. Then, by definition of the distance $d_{T}$, one has that
 \begin{equation} \label{eq:dG}
d_{T}(x,y) = \sum_{i=1}^{m} w(x_{i-1},x_i) =  \sum_{i=1}^{m}  d_{T}(x_{i-1},x_{i}) .
\end{equation}
Since $u_{T}$ given by \eqref{eq:pot} is an optimal potential for the dual optimal transport with ground cost $d_{T}$, it follows by combining Proposition \ref{prop:complementary} and Proposition \ref{prop:ubar} that
 \begin{align*} 
 &d_{T}(x,y) = u_{T}(x) - u_{T}(y) = \sum_{i=1}^{m} u_{T}(x_{i-1})-u_{T}(x_{i})    \\
  & = \sum_{i=1}^{m} d_{T}(x_{i-1},x_{i}) \sgn ( \Xi_{T}(x_{i-1}) ) \mathbf{1}_{\{ x_i = x_{i-1}^{+} \}} - \sum_{i=1}^{m} d_{T}(x_{i-1},x_{i}) \sgn ( \Xi_{T}(x_{i}))  \mathbf{1}_{\{ x_{i-1} = x_{i}^+ \}}  
 \end{align*} 
 Now, let us denote by $x_{m_0}$ the highest vertex (first common ancestor between $x$ and $y$) in the tree $T$ along the path from $x$ to $y$, in the sense that $x_i = x_{i-1}^{+}$ for all $1 \leq i \leq m_0$ (if $m_0 \geq 1$) and $x_{i-1} = x_{i}^+$  for all $m_0 + 1 \leq i \leq m$ (if $m_0 < m$). 
 From the above equality, we thus obtain that
 \begin{equation}   \label{eq:dG2}
 d_{T}(x,y) =  \sum_{i=1}^{m_0} d_{T}(x_{i-1},x_{i}) \sgn ( \Xi_{T}(x_{i-1}) ) - \sum_{i=m_0+1}^{m} d_{T}(x_{i-1},x_{i}) \sgn ( \Xi_{T}(x_{i}))   
 \end{equation}
Therefore, combining Equality \eqref{eq:dG} with Equality  \eqref{eq:dG2}, we conclude that one necessarily has that $ \Xi_{T}(x_{i-1}) > 0$ when $x_{i} = x_{i-1}^+$ and  $ \Xi_{T}(x_{i}) < 0$ when $x_{i-1} = x_{i}^+$  for all  $1 \leq i \leq m$.
\end{proof}

\section{Algorithms and its properties}\label{Algos}

In this section we present two algorithms. First, we present a dynamic programming way to find an optimal transport plan of a given tree together with a proof of its correctness and properties of the solution. Secondly, we present a simulated annealing type stochastic algorithm to find a spanning tree $T_\ast$ of $G$ minimizing \eqref{min:ST}.

These algorithms should be thought as first running the second algorithm to find the structure of the spanning tree $T_\ast$ of $G$. Then, once $T_\ast$ is found it can be used as an input of the first algorithm to find an optimal plan  $\gamma_{T_\ast}$ for OT with ground cost $d_G$ as justified by Theorem \ref{thm:plan}.

\subsection{Algorithm 1: dynamic programming to find the optimal plan from a given tree $T$.}\label{sec:dynprog}

We propose below an algorithm based on the principles of dynamical programming to construct an optimal coupling  $\gamma_T$ (when the ground cost is $d_T$) that works for any pair of measures $\mu$ and $\nu$. 

The dynamical programming procedure to construct  $\gamma_T$ consists in reducing the total mass of the measures at each step of the algorithm, and iteratively fill the transport plan on a sequence of reduced problems. We start by filling the diagonal entries of the OT plan as $\gamma_T(x,x)=\min\{\mu(x),\nu(x)\}$ for each $x \in \XX$. We interpret the imbalance mass $\xi(x) = \mu(x)-\nu(x)$ as the offer when it is positive and demand otherwise. It $\xi(x) = 0$ then the node $x$ is said to be balanced. At each step, our algorithm takes a leaf of the current tree, say $x$, and it finds a specific node $y$ with the opposite demand/offer behavior (i.e. $\sgn(\xi(x)) \neq \sgn(\xi(y))$). We send from the node offering to the node demanding the maximum amount of mass possible, in such a way that one remains consistent with the total cumulative offer/demand $\Xi_T(u)$ on the nodes $u$ belonging to the unique path between $x$ and $y$. Once this is accomplished, we can update $\mu$ and $\nu$ by decreasing their total mass, which is equivalent to updating the imbalance function $\xi$ and the cumulative function $\Xi_T$. In this way, the total imbalance mass $\sum_{x \in \XX} |\xi(x)|$ decreases at each iteration of the algorithm which guarantees its convergence.

A formal description of this procedure is detailed in Algorithm \ref{alg:tree_transport}, where it is understood that $\sgn(\cdot)$ is the usual sign function with  $\sgn(0) = 0$. This algorithm proceeds by recursively peeling leaves and propagating the excess mass from offering nodes to demanding nodes, while updating the imbalance function $\xi$ and the cumulative function $\Xi_T$. 
In the literature, existing algorithms for optimal transport using tree metrics follow a similar bottom-up dynamic programming approach to produce optimal couplings \cite{Sato20,MV23}. 
The closest procedure to Algorithm \ref{alg:tree_transport} is the one described in \cite[Section 4.2]{MV23}, which (informally) operates in two distinct phases: 
in the first phase, the excess mass of all subtrees $T_x$ with $\mu(T_x) - \nu(T_x) > 0$ is pushed toward the root, while,
in the second phase, this mass is sent toward the leaves in the subtrees $T_x$ with a deficit of mass $\mu(T_x) - \nu(T_x) < 0$. 
While the algorithm in \cite[Section 4.2]{MV23} also yields an optimal transport plan, both its description and the proof of its convergence are more involved than those of Algorithm \ref{alg:tree_transport}. 
To the best of our knowledge, this construction of a transport plan has not been explicitly considered before. 
We therefore analyze below several properties of the matrix $\gamma_T$ produced by Algorithm \ref{alg:tree_transport}.

\begin{algorithm}[h]
\caption{Construction of an optimal transport plan for the ground cost $d_T$}
\label{alg:tree_transport}

\textbf{Preprocessing:}\\
\Indp
$\gamma \gets 0^{\XX \times \XX}$\;
$\gamma_T(x,x) \gets \min\{\mu(x),\nu(x)\}, \quad \forall x \in \mathcal{X}$\;
$\xi \gets \mu - \nu$\;
\Indm
\vspace{0.5em}

\textbf{Main loop:}\\
\While{there exists a leaf $x \in T$ with $\xi(x) \neq 0$}{

Peak such a leaf $x$, and let $$m = |\xi(x)|, \; u_{-} = x, \text{ and } u = x^+;$$

\While{$\sgn(\Xi_T(u)-\Xi_T(u_{-})) = \sgn(\xi(x))$ and $\Xi_T(u) \neq 0$}{
\vspace{0.1cm} 
$m = \min\{m,|\Xi_T(u)|\};$ \\
$u_{-} \gets u \text{ and } u \gets u^+$;
}
    
    Find a vertex $y$ nearest to $u$ in the subtree $T_u$  such that:
   \begin{equation}
\sgn(\xi(x)) \neq \sgn(\xi(y)) \quad \text{and} \quad \xi(y) \neq 0, \label{condAlgo1}
\end{equation}
   \begin{equation}
\sgn(\Xi_T(v)) = - \sgn(\xi(x))  \text{ for any edge $ e = \{v^+, v \}$ on the path $P_{u,y}$ from $u$ to $y$.} \label{condAlgo2}
\end{equation}
Let
     \begin{equation}
m_{u,y} = \min_{ \{v^+, v \} \in P_{u,y}} \{ |\Xi_T(v)| \} \quad \text{and} \quad   m = \min \{ m, m_{u,y}, |\xi(y)| \} ; \label{def:m}
\end{equation}
    \eIf{$\xi(x) > 0$}{
	$\gamma_T(x,y) \gets  \gamma_T(x,y) + m$;
        $\xi(x) \gets \xi(x) - m$;
        $\xi(y) \gets \xi(y) + m$;
    }{
	$\gamma_T(y,x) \gets  \gamma_T(y,x) + m$;
        $\xi(x) \gets \xi(x) + m$;
        $\xi(y) \gets \xi(y) - m$;
    }

    \If{$\xi(x) = 0$}{
        $T \gets T \setminus \{x\}$;
    }

    \If{$\xi(y) = 0$ \textbf{and} $y$ is a leaf of $T$}{
        $T \gets T \setminus \{y\}$;
    }
    Update the cumulative function $\Xi_T$ accordingly, following the update of $\xi$ above
}
\vspace{0.5em}

\textbf{Output:} $\gamma_T$  
\end{algorithm} \vspace{0.2cm}

\begin{prop}
\label{prop:single_write}
Algorithm~\ref{alg:tree_transport} terminates after a finite number of steps. Moreover, one has that:
\begin{itemize}
\item[(i)] during the iterations of the algorithm, every entry $\gamma_T(x,y)$ is assigned a positive value at most once. Equivalently, at a given iteration, if the algorithm transfers a mass of size $m>0$ between a leaf $x$ and a node $y$, then the ordered pair  $(x,y)$ cannot be used at a subsequent iteration to update the transport plan.
\item[(ii)]   if $\gamma_T(x,y) > 0$ with $x \neq y$ then necessarily $\gamma_T(y,x) = 0$.
\item[(iii)]  Let $P_{x,y}$ denotes the set of edges on the unique path between two nodes $x$ and $y$. If $\gamma_T(x,y) > 0$, it follows that
\begin{itemize}
\item[(iii+)] $\Xi_T(v) = \sgn(\xi(x))$  for any $\{v,v^+\}$ belonging to $P_{x,y}$
\item[(iii-)] $\Xi_T(v) = -\sgn(\xi(x))$  for any $\{v^+,v\}$ belonging to $P_{x,y}$
\end{itemize}
\end{itemize}
\end{prop}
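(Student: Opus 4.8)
The plan is to reduce all four assertions to a single \emph{monotonicity invariant}: throughout the execution of Algorithm~\ref{alg:tree_transport}, for every vertex $v$ the signs of $\xi(v)$ and of $\Xi_T(v)$ never change (a value may reach $0$ but never crosses it) and the magnitudes $|\xi(v)|,|\Xi_T(v)|$ are non-increasing. Before proving this invariant I would first extract the per-transfer sign pattern that underlies~(iii), since it feeds everything else. Fix one iteration and look at the ascending portion of the path produced by the inner \texttt{while} loop, say $x=v_0,\,v_1=v_0^+,\dots,v_k=u$. Using the base case $\Xi_T(v_0)=\xi(v_0)$ (valid because the peeled vertex is a leaf, so $T_{v_0}=\{v_0\}$) together with the loop guard $\sgn(\Xi_T(v_j)-\Xi_T(v_{j-1}))=\sgn(\xi(x))$, a telescoping induction gives $\sgn(\Xi_T(v_j))=\sgn(\xi(x))$ for $j=0,\dots,k-1$; on the descending portion $P_{u,y}$, condition~\eqref{condAlgo2} gives directly $\sgn(\Xi_T(v))=-\sgn(\xi(x))$ for each child $v$. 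Reading this pattern along $P_{x,y}$ from the \emph{sender} (the endpoint carrying positive imbalance at that iteration) to the receiver yields exactly~(iii+) and~(iii-); the two cases, according to whether the peeled leaf is the sender or the receiver, are symmetric after relabelling with respect to the sender's sign.

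With the sign pattern in hand I would prove the invariant by induction on the iterations, the crucial bookkeeping step being to pin down which cumulative values move. A transfer between a sender $a$ and a receiver $b$ (so $\xi(a)>0>\xi(b)$ at that iteration) alters $\Xi_T(v)$ exactly when precisely one of $a,b$ lies in $T_v$, and a short argument identifies these as the vertices of $P_{a,b}$ other than its highest vertex $u$: the ancestors of $a$ below $u$ (which decrease by $m$) and the ancestors of $b$ below $u$ (which increase by $m$), while $u$ and all off-path vertices are unchanged. For each such vertex the definition of $m$ as the minimum in~\eqref{def:m}, combined with the inner-loop updates and the bounds $m\le|\xi(\cdot)|$ at the two endpoints, guarantees $m\le|\Xi_T(v)|$, while the sign pattern just established guarantees that $\Xi_T(v)$ and its increment $\pm m$ have opposite signs. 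Hence every modified quantity moves toward $0$ without crossing it, which is the inductive step. In particular a vertex carrying $\Xi_T(v)=0$ can never again be a changing vertex (every changing vertex has a nonzero, definite-sign cumulative value by the pattern above), so zeros of $\xi$ and of $\Xi_T$ are permanent.

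Termination then follows by counting. At each iteration $m>0$, since every quantity entering~\eqref{def:m} is strictly positive ($|\xi(x)|\neq0$ by the main-loop guard, $|\xi(y)|\neq0$ by~\eqref{condAlgo1}, and the relevant $|\Xi_T(\cdot)|\neq0$ by the inner-loop guard and~\eqref{condAlgo2}), and the quantity attaining this minimum is turned into a permanent zero. As there are at most $2N$ quantities of the form $|\xi(v)|$ or $|\Xi_T(v)|$, at most $2N$ iterations can occur. Claim~(i) is then immediate: if the ordered pair $(x,y)$ were used twice, the quantity attaining the minimum at the first use --- one of $|\xi(x)|$, $|\xi(y)|$, or $|\Xi_T(v)|$ for some $v\in P_{x,y}$ --- would be zero at the second use, contradicting respectively the main-loop guard, condition~\eqref{condAlgo1}, or the sign pattern~(iii) required at the second transfer (which forces every $\Xi_T(v)$ on the path to be nonzero). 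Finally, claim~(ii) is pure sign-preservation of $\xi$: $\gamma_T(x,y)>0$ can only be produced at an iteration where $x$ is a sender, i.e.\ $\xi(x)>0$, whereas $\gamma_T(y,x)>0$ would require $\xi(x)<0$ at some iteration, and by the invariant these cannot both occur. (Here~(iii) is read at the unique iteration using $(x,y)$, which exists by~(i); by the invariant the signs coincide with those of the initial $\mu-\nu$.)

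The main obstacle is the monotonicity invariant, and inside it the precise accounting of which $\Xi_T(v)$ change at a transfer together with the verification that the minimal choice of $m$ in~\eqref{def:m} caps each of them away from crossing zero. Once this is set up, termination, (i) and (ii) are short deductions, and (iii) is simply the sign pattern already read off from the inner loop and conditions~\eqref{condAlgo1}--\eqref{condAlgo2}.
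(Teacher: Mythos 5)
Your architecture is genuinely different from the paper's and, where it is complete, it is tighter: the paper argues locally (each transfer zeroes one entry of $\xi$ or of $\Xi_T$, and balanced or blocked vertices cannot be reused), whereas you package everything into a global sign-preservation and monotonicity invariant. That buys you an explicit bound of at most $2N$ iterations, a one-line proof of (ii) by pure sign preservation of $\xi$ (cleaner than the paper's, which handles (ii) through the same pair-blocking argument as (i)), and a clean derivation of (iii) from the telescoped inner-loop guard plus \eqref{condAlgo2}, with the correct relabelling according to which endpoint is the sender. Your accounting of which cumulative values move at a transfer --- exactly the vertices $v$ with $\{v,v^+\}$ on the transfer path, since these are the $v$ for which precisely one endpoint lies in $T_v$ --- and the verification that the minimum in \eqref{def:m} caps each such $|\Xi_T(v)|$ are both correct and are points the paper leaves implicit.

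There is, however, one genuine gap, and it is exactly the step the paper treats as the heart of its termination proof: you never show that the instruction ``Find a vertex $y$ nearest to $u$ in $T_u$ satisfying \eqref{condAlgo1} and \eqref{condAlgo2}'' can always be carried out. Your counting argument only shows that \emph{if} every iteration completes then at most $2N$ iterations occur; if at some iteration no admissible $y$ existed, the algorithm would stall with unbalanced leaves remaining, and the $2N$ bound would prove nothing. All three quantities you invoke as ``strictly positive in \eqref{def:m}'' presuppose that $y$ has already been found. The paper closes this by examining the exit of the inner loop: writing $\Xi_T(u)=\Xi_T(u_-)+\xi(u)+\sum_{v\in\child(u)\setminus\{u_-\}}\Xi_T(v)$, the failure of the loop guard forces (say for $\xi(x)>0$) a non-positive increment at $u$, hence either $\xi(u)<0$ (take $y=u$) or a child $v\neq u_-$ with $\Xi_T(v)<0$; unwinding the identity $\Xi_T(v)=\xi(v)+\sum_{w\in\child(v)}\Xi_T(w)$ recursively inside $T_v$ then produces a vertex $y$ with $\xi(y)<0$ reachable from $u$ along a path on which $\Xi_T$ stays negative, which is precisely \eqref{condAlgo1}--\eqref{condAlgo2}. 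You need to insert this existence argument (it also underwrites your claim that a vertex with $\Xi_T(v)=0$ can never again lie on a transfer path, since that claim quantifies over transfers that actually occur). Once it is in place, the remainder of your proof goes through as written.
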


\begin{proof}
To simplify the notation, we write $\Xi = \Xi_T$. 
Let us first prove that the Algorithm~\ref{alg:tree_transport} terminates after a finite number of steps. To this end, it suffices to show that, at each iteration of the algorithm, there always exists a node $y$ satisfying conditions \eqref{condAlgo1} and \eqref{condAlgo2}  that can be associated to $x$ with $m > 0$. Indeed, let assume that, at a given iteration of the algorithm, one peaks a leaf with $\xi(x) > 0$ (the other case being symmetric). Then, the algorithm  moves up in the tree until finding a node $u$ satisfying:
\begin{enumerate}[label=(\alph*), ref=\alph*]
\item \label{cond:a}  
$\sgn(\Xi(v)) > 0$ for all nodes $v \neq u$ on the path $P_{x,u}$ from $x$ to $u$
\item \label{cond:b}  
$\Xi(u) = \Xi(u_{-}) + \Delta(u)$ where $u_{-}$ denotes the child of $u$ that is on the path $P_{x,u}$ and $\Delta(u) =   \sum_{v \in \child(u) \setminus \{ u_{-} \}} \Xi(v)$ is such that  $\Delta(u) < 0$. Therefore, there necessarily exists a node $v \in \child(u) \setminus \{ u_{-} \}$ with $\Xi(v) < 0$. Hence, thanks to the definition of the cumulative function $\Xi$, we may prove in a recursive manner that there necessarily exists a node $y$ in the subtree $T_u$ satisfying  \eqref{condAlgo1} and \eqref{condAlgo2} with $\xi(y) \neq 0$ and $\Xi(v) < 0$ on any edge $ e = \{v^+, v \}$ belonging to $P_{u,y}$.
\end{enumerate}
Then, by definition, the quantity $m$ is either equal to
$$
\min\{\xi(x),-\xi(y)\}, \; \min_{(v,v^+) \in P_{x,u}} \{\Xi(v)\} \quad \text{or} \quad \min_{(v^+,v) \in P_{u,y}} \{-\Xi(v)\}.
$$
Consequently, the update $\xi(x) \gets \xi(x) - m$ and $\xi(y) \gets \xi(y) + m$ implies that either the imbalance function $\xi$ or the cumulative function $\Xi$ has one more entry equal to zero after each iteration. Therefore, during the iterations, all the entries of $\xi(x)_{x \in \XX}$ will progressively become equal to zero, implying that the algorithm terminates after a finite number of steps, which proves the first statement of Proposition \ref{prop:single_write}.

Let us now prove the  statements (i) and (ii).  The value $\gamma_T(x,x)$ is fixed to $\min\{\mu(x),\nu(x)\}$ at initialization for all $x \in \XX$, and it is clearly not modified during the iterations of the algorithm. Then, when a leaf $x$ of the current tree $T$ with $\xi(x) \neq 0$
is chosen, a nearest node $y$ with opposite sign of $\xi$ is found, and a quantity
$
m >0
$
is transferred from the node offering to the node demanding. Assume that $\xi(x)>0$ and $\xi(y)<0$, the other case being identical by exchanging the roles of $x$ and $y$. The function $\xi$ is then updated as follows:
\[
\xi(x)\leftarrow \xi(x)-m,\qquad \xi(y)\leftarrow \xi(y)+m.
\]
If $\xi(x)-m=0$, then the node $x$ becomes balanced and it is removed from the tree $T$, and if $\xi(y)+m=0$, then the node $y$ becomes balanced and, if it is a leaf, it is removed. In either of these two cases, at least one of the vertices $x$ or $y$ becomes balanced. Since, at the following iterations, the algorithm never updates $\gamma_T(\tilde x,\tilde y)$ 
where either vertex $\tilde x$ or $\tilde y$ is balanced, it follows that the pair $(x,y)$ or $(y,x)$ cannot be used at a later iteration. In the case, where $\xi(x)-m \neq 0$ and $\xi(y)+m \neq 0$, it follows from the definition of $m$ that the updating of $\xi$ results in an update of the cumulative function $\Xi$ such that there exists $v$ on the path $P_{x,y}$ from $x$ to $y$ with $\Xi(v) = 0$
with either $\{v,v^+\}$  or $\{v^+, v\}$ belonging to this path. Consequently, at subsequent iterations, the algorithm can never associate $x$ and $y$ as this can only be done along a path satisfying $\Xi(v) \neq 0$ for all such nodes $v$ thanks to Properties (\ref{cond:a}) and (\ref{cond:b}) above, which finally proves  statements (i) and (ii).

Finally, statement (iii) is an immediate consequence of  Properties (\ref{cond:a}) and (\ref{cond:b}) above which completes the proof.
\end{proof}

We shall now prove that the matrix $\gamma_T$ given by  Algorithm~\ref{alg:tree_transport} is an optimal coupling. Let us first prove that it is an admissible transport plan.

\begin{prop}
\label{prop:feasibility}
Let $\gamma_T$ be the matrix produced by Algorithm~\ref{alg:tree_transport}.
Then $\gamma_T$ is an admissible transport plan that is $\gamma_T \in \Pi(\mu,\nu)$.
\end{prop}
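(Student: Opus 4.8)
The plan is to control the marginals of the running matrix $\gamma_T$ through a loop invariant that ties the partial row and column sums to the current imbalance function $\xi$, and then to invoke the termination statement of Proposition~\ref{prop:single_write}, namely that every entry of $\xi$ eventually vanishes, to collapse that invariant onto the marginal constraints. Nonnegativity is immediate and I would dispose of it first: the diagonal is set once to $\gamma_T(x,x)=\min\{\mu(x),\nu(x)\}\ge 0$ and is never modified, while each off-diagonal entry starts at $0$ and is only ever incremented by some $m>0$, so $\gamma_T(x,y)\ge 0$ for all $(x,y)$ at every stage.

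The core of the argument is the invariant that, writing $\xi$ for the current (updated) imbalance function inside the algorithm, one has for every $x\in\XX$
\begin{equation*}
\sum_{y\in\XX}\gamma_T(x,y)=\mu(x)-\xi^{+}(x)
\qquad\text{and}\qquad
\sum_{y\in\XX}\gamma_T(y,x)=\nu(x)-\xi^{-}(x).
\end{equation*}
I would first check this after preprocessing: there $\gamma_T$ is purely diagonal with $\sum_{y}\gamma_T(x,y)=\min\{\mu(x),\nu(x)\}$, and since $\xi=\mu-\nu$ one has $\mu(x)-\xi^{+}(x)=\mu(x)-\max\{\mu(x)-\nu(x),0\}=\min\{\mu(x),\nu(x)\}$, and symmetrically for the column sums using $\xi^{-}$. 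For the inductive step I would examine a single transfer, say the branch $\xi(x)>0$, $\xi(y)<0$, which modifies only the entry $\gamma_T(x,y)\gets\gamma_T(x,y)+m$ and updates $\xi(x)\gets\xi(x)-m$, $\xi(y)\gets\xi(y)+m$. The crucial point is that $m\le|\xi(x)|=\xi(x)$ and $m\le|\xi(y)|=-\xi(y)$ by the definition \eqref{def:m} of $m$, so the signs of $\xi(x)$ and $\xi(y)$ are preserved by the update; consequently $\xi^{+}(x)$ drops by exactly $m$ while $\xi^{-}(x)=0$ is untouched, and $\xi^{-}(y)$ drops by exactly $m$ while $\xi^{+}(y)=0$ is untouched. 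Since the single incremented entry $\gamma_T(x,y)$ raises the row sum of $x$ and the column sum of $y$ each by $m$ and leaves the row sum of $y$ and the column sum of $x$ unchanged, all four affected instances of the invariant are preserved; the symmetric branch $\xi(x)<0$, $\xi(y)>0$ is handled identically by exchanging the roles of rows and columns.

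Finally, Proposition~\ref{prop:single_write} guarantees that the algorithm terminates with $\xi(x)=0$ for every $x\in\XX$, whence $\xi^{+}(x)=\xi^{-}(x)=0$ and the invariant reduces to $\sum_{y}\gamma_T(x,y)=\mu(x)$ and $\sum_{y}\gamma_T(y,x)=\nu(x)$ for all $x$. Together with nonnegativity this yields $\gamma_T\in\Pi(\mu,\nu)$. The main obstacle I anticipate is not any single computation but the bookkeeping of the inductive step: one must verify that a transfer touches exactly one off-diagonal entry, hence exactly one row sum and one column sum, and pair this with the sign-preservation of $\xi(x)$ and $\xi(y)$ so that the decrements of $\xi^{+}$ and $\xi^{-}$ match the increments of the corresponding marginals. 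Establishing that $m$ never flips the sign of $\xi(x)$ or $\xi(y)$, which rests on reading off the two bounds $m\le|\xi(x)|$ and $m\le|\xi(y)|$ from \eqref{def:m}, is precisely what makes the invariant close.
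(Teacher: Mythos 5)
Your proposal is correct and follows essentially the same route as the paper's proof: the same loop invariant (the paper writes it as $\mathrm{row}_{\gamma_T}(x)=\mu(x)-\max\{\xi(x),0\}$ and $\mathrm{col}_{\gamma_T}(y)=\nu(y)-\max\{-\xi(y),0\}$, which is exactly your $\mu(x)-\xi^{+}(x)$ and $\nu(x)-\xi^{-}(x)$), the same verification at initialization and under a single transfer using $m\le\min(|\xi(x)|,|\xi(y)|)$ to preserve signs, and the same appeal to Proposition~\ref{prop:single_write} for termination with $\xi\equiv 0$. No gaps; nothing further is needed.
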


\begin{proof}
The algorithm starts with
\[
\gamma_T(x,x)=\min\{\mu(x),\nu(x)\},  \text{ for all } x\in \mathcal{X},
\text{ and }
\gamma_T(x,y)=0 \text{ for } x\neq y,
\]
and one has that $\xi = \mu - \nu$ at its initialization. 
At each step of the algorithm, the updating of  $\xi$ records the residual excess or deficit of mass at each vertex: a positive value $\xi(x)>0$ represents a remaining offer at $x$, and a negative value $\xi(x)<0$ a remaining demand.
For a matrix $A \in \RR^{\XX \times \XX}$, we introduce the notation 
\[
\mathrm{row}_A(x) = \sum_{y \in \XX} A(x,y),
\qquad
\mathrm{col}_A(y) = \sum_{x \in \XX} A(x,y).
\]
In what follows, we shall prove by induction, that at each step of the algorithm, the following equalities always hold:
\begin{eqnarray}
\mathrm{row}_{\gamma_T}(x)  & = &  \mu(x) - \max\{\xi(x),0\} \text{ for all } x \in \XX, \label{eq:row} \\
\mathrm{col}_{\gamma_T}(y) & = &  \nu(y) - \max\{-\xi(y),0\} \text{ for all } y \in \XX, \label{eq:col}
\end{eqnarray}
where it is understood that the value of $\xi$ is evolving according to Algorithm~\ref{alg:tree_transport}. Indeed, at initialization of the algorithm, one has that, for every $x \in \mathcal{X}$,
$$
\mathrm{row}_{\gamma_T}(x)
= \gamma_T(x,x)
= \min\{\mu(x),\nu(x)\}
= \mu(x) - \max\{\mu(x)-\nu(x),0\}
= \mu(x) - \max\{\xi(x),0\},
$$
and similarly, for every $y \in \mathcal{X}$,
$$
\mathrm{col}_{\gamma_T}(y)
= \gamma_T(y,y)
= \nu(y) - \max\{-\xi(y),0\}.
$$
Thus, Equalities \eqref{eq:row} and \eqref{eq:col} hold at initialization. Let us now consider one step of the algorithm, and suppose that it chooses a leaf $x$ and a node $y$ with opposite signs  
$\sgn(\xi(x)) \neq \sgn(\xi(y))$, and it pushes
$$
m \leq \min(|\xi(x)|,|\xi(y)|)
$$
units of mass from a node offering to a node demanding. We distinguish the following two cases:

\begin{enumerate}
\item Consider first the case $\xi(x)>0$ (offer at $x$) and $\xi(y)<0$ (demand at $y$).  
The algorithm updates the matrix $\gamma_T$ and the excess/deficit function $\xi$ as follows:
$$
\gamma_T(x,y) \leftarrow \gamma_T(x,y) + m, \qquad
\xi(x) \leftarrow \xi(x) - m, \qquad
\xi(y) \leftarrow \xi(y) + m.
$$
 Hence, assuming that Equalities \eqref{eq:row} and \eqref{eq:col} hold at the previous iteration, one has that the quantities $\mathrm{row}_{\gamma_T}(x)$ and $\mathrm{col}_{\gamma_T}(y)$ are updated as follows:
\[
\mathrm{row}_{\gamma_T}(x)
\leftarrow \mathrm{row}_{\gamma_T}(x) + m
= \big(\mu(x) - \xi(x)\big) + m
= \mu(x) - (\xi(x)-m)
= \mu(x) - \max\{\xi(x)-m,0\},
\]
since $\xi(x)-m\ge 0$ by definition of $m$.
Similarly,
\[
\mathrm{col}_{\gamma_T}(y)
\leftarrow \mathrm{col}_{\gamma_T}(y) + m
= \big(\nu(y) + \xi(y)\big) + m
= \nu(y) - (-\xi(y)-m)
= \nu(y) - \max\{-(\xi(y)+m),0\},
\]
since $\xi(y)+m \leq 0$  by definition of $m$.
All other rows and columns of $\gamma_T$ remain unchanged, and therefore  Equalities \eqref{eq:row} and \eqref{eq:col} still hold after an iteration of the algorithm.

\item The second case $\xi(x)<0$ and $\xi(y)>0$ is symmetric,  and the result follows by exchanging the roles of $\xi(x)$ and $\xi(y)$.
\end{enumerate}

At each iteration of the algorithm, a non-negative amount $m$ is added to one entry of $\gamma_T$,   and therefore, one always has that $\gamma_T(x,y) \geq 0$ for all $(x,y) \in \XX \times \XX$ along all the iterations. 

Thanks to Proposition \ref{prop:single_write}, we notice that the algorithm terminates after a finite number of steps and, at termination one necessarily has that $\xi(x)=0$ for all $x$. Therefore, at the last iteration, using  Equalities \eqref{eq:row} and \eqref{eq:col}, it follows that
$$
\mathrm{row}_{\gamma_T}(x) = \mu(x) - \max\{0,0\} = \mu(x), \qquad
\mathrm{col}_{\gamma_T}(y) = \nu(y) - \max\{0,0\} = \nu(y).
$$
Therefore the matrix $\gamma_T$ produced by Algorithm~\ref{alg:tree_transport} has positive entries and its marginals (when summing either over rows or columuns) are $\mu$ and $\nu$. It is thus an admissible transport plan which completes the proof.
\end{proof}

The following proposition proves the optimality of $\gamma_T$, and it also shows how the mass is transported along paths in the tree $T$. To this end, we introduce the following notation: for a given edge $e = \{u,u^+\}$, we denote by $T_u$ and $T_u^c$ the decomposition of $T$ into two subtrees  that are connected by this edge.
 
 \begin{prop} \label{prop:optim_plan}
The matrix $\gamma_T$ produced by
Algorithm~\ref{alg:tree_transport} is an optimal transport plan. Moreover, for a given edge $e = \{u,u^+\}$, if $\Xi_T(u) = \mu(T_u) - \nu(T_u) > 0$ then
\begin{equation}
\sum_{x\in T_u}\sum_{y\in T_u^c} \gamma_T(x,y) = \mu(T_u) - \nu(T_u), \text{ and  } \sum_{x\in T_u^c}\sum_{y\in T_y} \gamma_T(x,y) = 0, \label{eq:pos}
\end{equation}
whereas if  $\Xi_T(u) = \mu(T_u) - \nu(T_u) < 0$ then
\begin{equation}
\sum_{x\in T_u}\sum_{y\in T_u^c} \gamma_T(x,y) = 0, \text{ and  } \sum_{x\in T_u^c}\sum_{y\in T_u} \gamma_T(x,y) = \nu(T_u) - \mu(T_u). \label{eq:neg}
\end{equation}
\end{prop}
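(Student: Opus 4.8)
The plan is to prove optimality by exhibiting a dual certificate and invoking complementary slackness (Proposition~\ref{prop:complementary}), and then to read off the flow identities \eqref{eq:pos}--\eqref{eq:neg} from feasibility combined with the sign information recorded in Proposition~\ref{prop:single_write}. Proposition~\ref{prop:feasibility} already gives $\gamma_T\in\Pi(\mu,\nu)$, and Proposition~\ref{prop:ubar} gives that the potential $u_T$ of \eqref{eq:pot} lies in $\LipT$ and is an optimal dual potential for the ground cost $d_T$. Hence the whole matter reduces to checking that $u_T(x)-u_T(y)=d_T(x,y)$ whenever $\gamma_T(x,y)>0$.

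First I would prove a sign-preservation lemma, which is the crux. For each vertex $v$, the value $\Xi_T(v)$ evolves monotonically toward $0$ over the run of the algorithm and never changes sign. A single transfer of mass $m>0$ from an offering node to a demanding node changes $\Xi_T(v)$ by $-m$ when $v$ is an ancestor of the offering node but not of the demanding one, by $+m$ in the symmetric situation, and leaves it unchanged otherwise. The affected vertices are exactly those lying strictly below the common ancestor on the two halves of the transport path, and Proposition~\ref{prop:single_write}(iii) pins down the current sign of $\Xi_T(v)$ at each of them, while the definition of $m$ in Algorithm~\ref{alg:tree_transport} ensures $m\le|\Xi_T(v)|$; thus each update shrinks $|\Xi_T(v)|$ without crossing $0$. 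Consequently the nonzero sign of $\Xi_T(v)$ recorded at any transfer coincides with the sign of the original imbalance $\mu(T_v)-\nu(T_v)$.

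With this lemma, complementarity follows by telescoping. For $\gamma_T(x,y)>0$, write the path as $x=x_0,\ldots,x_m=y$ with first common ancestor $x_{m_0}$, so that $x_i=x_{i-1}^+$ for $i\le m_0$ and $x_{i-1}=x_i^+$ for $i>m_0$. Applying Proposition~\ref{prop:ubar} to each edge and substituting the signs of Proposition~\ref{prop:single_write}(iii) (transported to the original $\Xi_T$ by the lemma) makes every summand $u_T(x_{i-1})-u_T(x_i)$ equal to $d_T(x_{i-1},x_i)$, so that $u_T(x)-u_T(y)=d_T(x,y)$; the vertices involved all have $\Xi_T\neq0$, so the sign convention at zeros is harmless. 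Proposition~\ref{prop:complementary} then yields optimality. For the flow identities, summing the marginal constraints of $\gamma_T$ over $T_u$ and subtracting gives
\[
\sum_{x\in T_u}\sum_{y\in T_u^c}\gamma_T(x,y)-\sum_{x\in T_u^c}\sum_{y\in T_u}\gamma_T(x,y)=\mu(T_u)-\nu(T_u)=\Xi_T(u),
\]
the net flow across $e=\{u,u^+\}$. If $\Xi_T(u)>0$ but some $\gamma_T(x',y')>0$ had $x'\in T_u^c$ and $y'\in T_u$, then $P_{x',y'}$ would cross $e$ as an edge of the form $\{u^+,u\}$, forcing $\sgn(\Xi_T(u))=-\sgn(\xi(x'))=-1$ by Proposition~\ref{prop:single_write}(iii-) and the lemma, a contradiction; hence the incoming sum vanishes and \eqref{eq:pos} follows, with the case $\Xi_T(u)<0$ being symmetric and giving \eqref{eq:neg}.

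The hard part will be the sign-preservation lemma, for the subtle reason that $\Xi_T$ is a dynamically updated object whereas both the certificate $u_T$ and the edge weights $|\mu(T_u)-\nu(T_u)|$ refer to its initial value; reconciling the time-of-transfer signs of Proposition~\ref{prop:single_write} with these original signs is precisely what the lemma accomplishes. The delicate point within it is to guarantee $m\le|\Xi_T(v)|$ simultaneously for every vertex on the transport path---both on the upward segment scanned by the inner while-loop and on the downward segment $P_{u,y}$---which is where the precise definition of $m$ in \eqref{def:m} must be used with care.
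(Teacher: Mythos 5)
Your proof is correct, but it certifies optimality by a genuinely different mechanism than the paper. The paper's argument is purely primal: it decomposes the cost of $\gamma_T$ over the edges of $T$ as $\sum_{e\in\ee_T}w(e)\cdot(\text{mass crossing }e)$, establishes the one--directional flow identities \eqref{eq:pos}--\eqref{eq:neg} first (by tracking how each $\Xi_T(u)$ is driven monotonically to zero and noting that the definition \eqref{def:m} of $m$ forbids any transfer crossing an edge against the sign of $\Xi_T$), and then reads off that the total cost equals $\sum_{x\neq r}d_T(x,x^{+})|\Xi_T(x)|=K_{d_T}(\mu,\nu)$ via \eqref{eq:K-dist_tree1}. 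You instead exhibit the dual certificate $u_T$ of \eqref{eq:pot}, verify complementary slackness along each transport path by telescoping Proposition~\ref{prop:ubar} over its upward and downward segments, and invoke the converse direction of Proposition~\ref{prop:complementary}; the flow identities are then obtained separately from the marginal constraints together with the sign information. Both routes hinge on the same combinatorial fact --- your sign-preservation lemma, which the paper invokes only tersely inside its proof --- and your isolation of it as an explicit statement, including the observation that zero is an absorbing state for $\Xi_T(v)$ so that the time-of-transfer sign coincides with the initial sign appearing in the definition of $u_T$ and in $|\mu(T_u)-\nu(T_u)|$, is a cleaner account of the one delicate point. What the paper's route buys is self-containedness (no appeal to duality, only the closed form \eqref{eq:K-dist_tree1}); what yours buys is that feasibility plus complementarity simultaneously re-certifies $u_T$ as an optimal potential and yields optimality of $\gamma_T$ without presupposing the value of the optimum. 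One small bookkeeping item to make the telescoping airtight: an entry $\gamma_T(x,y)$ could a priori aggregate several transfers along different paths, but Proposition~\ref{prop:single_write}(i) guarantees each off-diagonal entry is written at most once, so the path whose signs you analyze is well defined; you use this implicitly and should cite it.
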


\begin{proof}
Thanks to Proposition \ref{prop:feasibility}, $\gamma_T$ is an admissible transport plan. To prove its optimality, it remains to show that its total cost equals the value of  the $K$-distance $K_{d_T}(\mu,\nu)$. To this end, recall  that for $(x,y)\in \XX \times \XX$,  $P_{x,y}$ denotes
the  set of edges on the unique path from $x$ to $y$ in the tree $T$, and  that
$
d_T(x,y) \;=\; \sum_{e \in P_{x,y}} w(e)
$
with $w(e) = w(u,v)$ for $e =\{u,v\}$.
Therefore, the cost of the transport plan $\gamma_T$ may be decomposed as follows:
\begin{eqnarray}
\sum_{x,y \in \XX} \gamma_T(x,y)\, d_T(x,y) & =&  \sum_{x,y \in \XX} \gamma_T(x,y) \sum_{e \in P_{x,y}} w(e) \nonumber \\
 & = & \sum_{e \in \ee_T} w(e) \sum_{x,y \in \XX} \gamma_T(x,y)\,\mathbf{1}_{\{e \in P_{x,y} \}}, \label{eq:keydecomp}
\end{eqnarray}
where $\mathbf{1}_{\{\cdot\}}$ is the indicator function.
 Then, for a given edge $e = \{u,u^+\}$, the tree $T$ may be decomposed into two subtrees $T_u$ and $T_u^c$ that are connected by this edge. Now, the fact that $\mathbf{1}_{\{e \in P_{x,y} \}} = 1$ is equivalent to the property that either $(x \in T_u, y \in T_u^c)$ or  $(x \in T_u^c, y \in T_u)$. Therefore, having fixed the edge $e = \{u,u^+\}$, one has that
\begin{equation}
 \sum_{x,y \in \XX} \gamma_T(x,y)\,\mathbf{1}_{\{e \in P_{x,y} \}}
  = \sum_{x\in T_u}\sum_{y\in T_u^c} \gamma_T(x,y)
    + \sum_{x\in T_u^c}\sum_{y\in T_y} \gamma_T(x,y). \label{eq:decomp}
\end{equation} 
The right-hand side of \eqref{eq:decomp} is interpreted as the  total amount of mass that crosses the edge $e$ in both directions. It remains to show that, for the plan $\gamma_T$ produced by Algorithm~\ref{alg:tree_transport},
and for each edge $e=(u,u^+)$, one of the two sums in the right-hand side of \eqref{eq:decomp} vanishes, while the other one
equals $|\mu(T_u)-\nu(T_u)|$, meaning that the total mass crossing the edge $e$ goes in the direction of the sign of $\mu(T_u)-\nu(T_u)$.

To prove this property, we let $\Xi(u) =  \sum_{x \lesssim u} \xi(x) =  \mu(T_u) - \nu(T_u)$ be the imbalance mass of subtree $T_u$, and suppose that $\mu(T_u) - \nu(T_u) > 0$ at the initialization of the algorithm. During the iterations of the algorithm, a leaf $x \in T_u$ with $\xi(x) > 0$ will eventually be  picked and associated to a node $y$ with $\xi(y) < 0$. Then,  for a positive $m > 0$, the algorithm will update the transport plan and the function $\xi$ as follows: $\gamma_T(y,x) \gets  m$, $\xi(x) \gets \xi(x) - m$ and $\xi(y) \gets \xi(y) + m$. Therefore, if $y \in T_u$ then the value of the cumulative function $\Xi(u)$ will not change. To the contrary, if $y \in T_u^c$ then $\Xi(u)$ will decrease by an amount of $m$, meaning that a quantity $m$ of mass is pushed from $T_u$ to $T_u^c$.  As shown in the proof of Proposition \ref{prop:single_write},  the value of the cumulative function $\Xi(u)$ decreases along the iterations until reaching zero, implying that the total of mass transferred from  $T_u$ to $T_u^c$ through the edge $e=(u,u^+)$ is
$$
\mu(T_u) - \nu(T_u) = \sum_{x\in T_u}\sum_{y\in T_u^c} \gamma_T(x,y),
$$
and that
\begin{equation}
\sum_{x\in T_u^c}\sum_{y\in T_u} \gamma_T(x,y) = 0. \label{eq:sum_null}
\end{equation}
Indeed, by construction of the algorithm, sending mass from a leaf $x \in T_u^c$ with $\xi(x) > 0$ to a demanding node $y \in T_u$ with $\xi(y) < 0$  by passing through the edge $e=(u,u^+)$ necessarily requires that $\Xi(u) < 0$ which is not possible as the sign of $\Xi(u)$ always remains unchanged (and thus positive here) during the iterations of the algorithm by definition \eqref{def:m} of the quantity of mass $m$ that can be transferred between two nodes. Hence, transferring mass from $T_u^c$ to  $T_u$ is not allowed implying that $\gamma_T(x,y) = 0$ for any pair $(x,y)$ with $x\in T_u^c$ and $y\in T_u$ which proves that equality \eqref{eq:sum_null} holds. This completes the proof.
\end{proof}

Equations \eqref{eq:pos} and  \eqref{eq:neg} may be interpreted as follows. Recall that, for every $(x,y)\in \XX \times \XX$, we denote by $P_{x,y}$ the unique path going from $x$ to $y$ in the tree $T$. Then, Proposition \ref{prop:optim_plan} implies that if $\gamma_T(x,y)>0$ for some $(x,y)\in \XX \times \XX$, then $\gamma_T(u,v)=0$ for every $u,v\in \XX$ such that the oriented path $P_{u,v}$ in the tree $T$ uses an edge in $P_{x,y}$ in the opposite direction. We illustrate this property in Figure \ref{fig_lem}.

\begin{rem}
From the relationship  \eqref{eq:flow_gamma} between a transport plan and a flow on a tree $T$, the output of Algorithm~\ref{alg:tree_transport} and the result of Proposition \ref{prop:optim_plan} yield the flow
$$
f_T(u,u^+) =  \sum_{(x,y) : (u,u^+) \in P_{x,y}}\gamma_T(x,y) = \sum_{x\in T_u}\sum_{y\in T_u^c} \gamma_T(x,y) = \Xi_T^+(u)
$$
and
$$
f_T(u^+,u) =  \sum_{(x,y) : (u^+,u) \in P_{x,y}}\gamma_T(x,y) = \sum_{x\in T_u^c}\sum_{y\in T_u} \gamma_T(x,y) = \Xi_T^-(u)
$$
which is an optimal flow for the Beckmann problem by Proposition \ref{prop:flow_formula}. Conversely, Algorithm~\ref{alg:tree_transport}  can also be interpreted as the construction of an optimal coupling from the knowledge of the  optimal flow $f_T$ as it only depends on the imbalance cumulative function $\Xi_T$. 
\end{rem}

   \begin{figure}[htbp]
       \centering
       \includegraphics[scale=0.75]{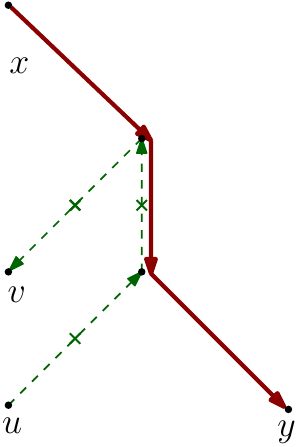}
       \caption{Illustration of Equations \eqref{eq:pos} and  \eqref{eq:neg} from Proposition \ref{prop:optim_plan}, which says that the transit of mass can be made only in one direction. The red path represents a positive transport $\gamma_T(x,y) > 0$ and the dashed path represent a transport that is not allowed since $\gamma_T(u,v)=0$.}
       \label{fig_lem}
   \end{figure}
   
 Finally, using the property that $\gamma_T(x,x)=\min\{\mu(x),\nu(x)\}$ and the fact that $\gamma_T$ is an admissible transport plan (that is $\sum_{x \in \XX } \gamma_T(x,y)= \nu(y)$ and  $\sum_{y \in \XX } \gamma_T(x,y)= \mu(x)$), we obtain the following property.
 
\begin{lem}\label{lem:diag_support}
Let $\gamma_T$ be the optimal transport plan produced by
Algorithm~\ref{alg:tree_transport}.
Then, for any $x,y \in \XX$:
\begin{itemize}
    \item[(i)] If $\mu(x) \le \nu(x)$, then $\gamma_T(x,\tilde{y}) = 0$ for all $\tilde{y} \neq x$.
    \item[(ii)] If $\nu(y) \le \mu(y)$, then $\gamma_T(\tilde{x},y) = 0$ for all $\tilde{x} \neq y$.
\end{itemize}
\end{lem}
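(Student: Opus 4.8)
The plan is to exploit two ingredients that are already available at this point in the paper: the fixed diagonal $\gamma_T(x,x) = \min\{\mu(x),\nu(x)\}$ guaranteed by the preprocessing step of Algorithm~\ref{alg:tree_transport}, and the admissibility $\gamma_T \in \Pi(\mu,\nu)$ established in Proposition~\ref{prop:feasibility}, which yields the marginal identities $\sum_{\tilde y \in \XX} \gamma_T(x,\tilde y) = \mu(x)$ and $\sum_{\tilde x \in \XX} \gamma_T(\tilde x, y) = \nu(y)$. Combined with the nonnegativity of every entry of $\gamma_T$, these force the off-diagonal entries in any saturated row (or column) to vanish.

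First, for statement (i), I would assume $\mu(x) \le \nu(x)$, so that $\gamma_T(x,x) = \min\{\mu(x),\nu(x)\} = \mu(x)$. Splitting the row-sum constraint into its diagonal term and the remainder gives $\mu(x) = \gamma_T(x,x) + \sum_{\tilde y \neq x}\gamma_T(x,\tilde y) = \mu(x) + \sum_{\tilde y \neq x}\gamma_T(x,\tilde y)$, whence $\sum_{\tilde y \neq x}\gamma_T(x,\tilde y) = 0$. Since each summand is nonnegative, every term must be zero, giving $\gamma_T(x,\tilde y) = 0$ for all $\tilde y \neq x$.

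Statement (ii) then follows by the symmetric argument, exchanging the roles of rows and columns: if $\nu(y) \le \mu(y)$ then $\gamma_T(y,y) = \nu(y)$, and the column-sum constraint $\sum_{\tilde x \in \XX}\gamma_T(\tilde x, y) = \nu(y)$ reduces to $\sum_{\tilde x \neq y}\gamma_T(\tilde x, y) = 0$, which forces $\gamma_T(\tilde x, y) = 0$ for all $\tilde x \neq y$.

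There is no genuine obstacle here; the only point worth emphasizing is that the argument relies on the \emph{exact} saturation $\gamma_T(x,x) = \min\{\mu(x),\nu(x)\}$, not merely the inequality $\gamma_T(x,x) \le \min\{\mu(x),\nu(x)\}$ that holds for any admissible plan. This exact identity is precisely the property built into Algorithm~\ref{alg:tree_transport} at initialization and justified in general by Proposition~\ref{prop:coldiag}. The lemma can thus be read as the statement that such a node acts either purely as a sender or purely as a receiver of mass, in line with the remark following Proposition~\ref{prop:coldiag}.
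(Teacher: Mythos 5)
Your proposal is correct and matches the paper's own justification, which likewise derives the lemma from the saturated diagonal $\gamma_T(x,x)=\min\{\mu(x),\nu(x)\}$ together with the marginal constraints and nonnegativity of $\gamma_T$. You simply spell out the row/column bookkeeping that the paper leaves implicit.
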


\subsection{Algorithm 2: simulated annealing to find an optimal spanning tree of $G$ } \label{sec:algoSA}

We now describe a stochastic algorithm based on simulated annealing (SA) to find a solution to the optimisation problem \eqref{min:ST} over the set $\ST$ of spanning trees of a given edge-weighted graph $G = (\XX,\ee_G,w)$. The algorithm starts with an initial spanning tree of $G$ that is denoted as $T_0 = (\XX, \ee_0, w)$ and  rooted at an arbitrary vertex $r_0$.  Then, our stochastic algorithm at iteration $m \geq 1$ consists in randomly generating a new candidate spanning tree $\hat{T}_m$ rooted at a new root $\hat{r}_m$. Based on the principles of simulated annealing, we first compute the following Hamiltonian function
$$
H(T_{m-1},\hat{T}_m) = K_{d_{T_{m-1}}}(\mu,\nu) - K_{d_{\hat{T}_m}}(\mu,\nu).
$$
We define the probability
$$
p_m = \min(1,\exp(\beta_m H(T_{m-1},\hat{T}_m))),
$$
where $\beta_m > 0$ is a temperature parameter that is calibrated during the iterations using an adaptive cooling procedure, to be described below. We accept or reject the candidate spanning tree from a sample $U_m$ of the uniform distribution on $[0,1]$ and $\hat{T}_m$ as follows:
\begin{itemize}
	\item[-] if  $U_m \leq p_m$ then $T_m =  \hat{T}_m$ and $r_m = \hat{r}_m$ (accept),
	\item[-] if  $U_m > p_m$ then $T_m =  T_{m-1}$ and $r_m = r_{m-1}$  (reject).
\end{itemize}

Notice that $H(T_{m-1},\hat{T}_m)$ is positive if the candidate tree $\hat{T}_m$ has smaller AE-norm than $T_{m-1}$, so we improve the value of the functional of the optimisation problem \eqref{min:ST} by replacing $T_{m-1}$ with  $\hat{T}_m$. When this happens, the parameter $p_m = 1$ and the candidate spanning tree is thus always accepted.

\subsubsection{Choice of the random transitions and fast evaluation of the Hamiltonian}
The crucial steps in the above algorithm are the random choice of a candidate spanning tree $\hat{T}_m$ and the fast evaluation of the Hamiltonian function. These two steps are described below. \\

{\it Random choice of $\hat{T}_m$}. From iteration $m-1$, one has a weighted spanning tree $T_{m-1} = (\XX,\ee_{m-1},w)$ rooted at $r_{m-1}$ that is oriented from the leaves to the root due to the partial ordering that has been chosen in Section \ref{sec:tree}.
Hence, every vertex but the root has exactly one outgoing edge.
Starting from the  spanning tree $T_{m-1}$ rooted at $r_{m-1}$, we  randomly choose a vertex $\hat{r}_m \neq r_{m-1}$ by a uniform sampling in the neighbors  of  $r_{m-1}$ in the graph $G = (\XX,\ee_G,w)$. Then, we define $\hat{T}_m = (\XX,\hat{\ee}_{m},w)$ as the tree rooted at $\hat{r}_m$, formed by the new set of edges
$$
\hat{\ee}_{m} =  \{ \ee_{m-1} \cup{\overline{e}_m} \} \setminus{ \{\underline{e}_m \} },
$$
for
$$
\overline{e}_m=\{r_{m-1},\hat{r}_m\} , \mbox{ and } \underline{e}_m = \{\hat{r}_m,\hat{r}_{m}^{+}\};
$$
where the edge $(\hat{r}_m,\hat{r}_{m}^{+})$, a directed version of $\underline{e}_m$, was the unique outgoing edge of $\hat{r}_m$ in $T_{m-1}$. It is easy to verify that
$$
\hat{T}_m:= (\XX,\hat{\ee}_{m},w)
$$
is still a spanning tree, since it is still connected and every vertex but the root has exactly one outgoing edge.\\

{\it Fast computation of the Hamiltonian.} In the graph $G$, we denote by $\mathcal{C}_m$ the cycle obtained by the union of the path from  $\hat{r}_m$ to $r_{m-1}$ in the tree $T_{m-1}$ and the new edge $\overline{e}_m =  \{r_{m-1},\hat{r}_m\}$.  Then, a key remark is that $\Xi_{T_{m-1}}(x) = \Xi_{\hat{T}_m}(x)$ for all edges $\{x,x^+\}$  not belonging to this cycle. Therefore, noticing that
$$
\Xi_{\hat{T}_m}(x) = \Xi_{T_{m-1}}(x) - \Xi_{T_{m-1}}(\hat{r}_m) \text{ for } \{x,x^+\} \in \mathcal{C}_m \setminus \{  \overline{e}_m,\underline{e}_m \},
$$
and that
$$
\Xi_{\hat{T}_m}(r_{m-1}) = - \Xi_{T_{m-1}}(\hat{r}_m),
$$
it follows that computing the Hamiltonian simply amounts to a summation over edges only belonging to the cycle $\mathcal{C}_m$ as shown by the expression below: 
\begin{eqnarray*}
	H(T_{m-1},\hat{T}_m) & =  & \sum_{x \in \XX \setminus \{r_{m-1}\}} w(x,x^{+}) |\Xi_{T_{m-1}}(x)| - \sum_{x \in \XX \setminus \{\hat{r}_m \}} w(x,x^{+}) |\Xi_{\hat{T}_m}(x)|  \\
	& = &  \sum_{ \{x,x^+\} \in \mathcal{C}_m \setminus{\overline{e}_m, \underline{e}_m} } w(x,x^{+}) |\Xi_{T_{m-1}}(x)| - \sum_{\{x,x^+\} \in \mathcal{C}_m  \setminus{\overline{e}_m, \underline{e}_m}  } w(x,x^{+}) |\Xi_{\hat{T}_m}(x)| \\
	& & + w(\hat{r}_m,\hat{r}_{m}^{+}) |\Xi_{T_{m-1}}(\hat{r}_m)|  - w(r_{m-1},\hat{r}_m)  |\Xi_{\hat{T}_m}(r_{m-1})| \\
	& = &  \sum_{ \{x,x^+\} \in \mathcal{C}_m \setminus{\overline{e}_m, \underline{e}_m} } w(x,x^{+}) \left( |\Xi_{T_{m-1}}(x)| - |  \Xi_{T_{m-1}}(x) - \Xi_{T_{m-1}}(\hat{r}_m)|\right) \\
	& & +  \left( w(\hat{r}_m,\hat{r}_{m}^{+}) - w(r_{m-1},\hat{r}_m) \right) |\Xi_{T_{m-1}}(\hat{r}_m)|,
\end{eqnarray*}
which is an expression that is fast to compute for the graph considered in these numerical experiments. 

\subsubsection{An adaptive cooling schele}

Finally, we use an adaptive strategy inspired by the general adaptive MCMC
framework of \cite{AndrieuThoms2008}, in which the temperature parameter
$\beta_m$ is progressively adjusted according to the observed acceptance rate
of the proposed transitions from $T_{m-1}$ to $\hat{T}_m$. The underlying idea is to control the frequency with which proposed transitions are accepted.
When proposed transitions are accepted too frequently, the temperature is increased
in order to allow for a broader exploration of the space of trees.
Conversely, when the acceptance rate becomes too small, the temperature is
decreased in order to concentrate the search around the current tree. More precisely, let $a_m$ denote the empirical acceptance rate at iteration $m$,
computed over a sliding window of the last $M$ iterations (with $M=100$ in our
experiments), and let $a^\ast \in (0,1)$ be a prescribed target acceptance rate.
Starting from an initial temperature $\beta_0 > 0$, the temperature is updated
according to the multiplicative rule
$$
\beta_{m+1}
=
\beta_m \bigl(1 + \eta (a_m - a^\ast)\bigr),
$$
where $\eta > 0$ is a  parameter chosen sufficiently small to ensure a smooth
evolution of the temperature across the iterations. When $a_m > a^\ast$, the temperature $\beta_m$ increases, which results in a
higher probability of accepting moves that increase the Hamiltonian $H(T_{m-1},\hat{T}_m)$.
On the contrary, when $a_m < a^\ast$, the temperature decreases, thereby
reducing the acceptance of such transitions, which  favors exploration
around the current tree. The adaptation parameter $\eta$ is chosen sufficiently small to ensure a smooth
evolution of the temperature across iterations. In all numerical experiments, we use $\beta_0 = 0.1$, $a^\ast = 0.01$, and
$\eta = 0.01$.

\section{Numerical experiments} \label{sec:num}

Stochastic algorithms for computational OT~\cite{Stochastic_Bigot_Bercu,NIPS2016_6566} 
are designed to solve the Kantorovich dual formulation of optimal transport. 
In this paper, we propose a stochastic algorithm based on simulated annealing 
to solve the minimization problem~\eqref{min:ST} over the space $\ST$ of rooted 
spanning trees of $G$, which is a finite but typically very large set. 
This numerical procedure returns an optimal spanning tree $T_\ast$, which is then 
used to construct an optimal transport plan via Algorithm~\ref{alg:tree_transport}, 
as well as a dual potential $u_G$ given by~\eqref{eq:pot_intro}. 
To evaluate the performance of our stochastic algorithm, we compare its outputs 
with those produced by the Python Optimal Transport (POT) toolbox~\cite{flamary2024pot,flamary2021pot}, 
which computes a transport plan and a dual potential using the network simplex algorithm from the knowledge of the measure $\mu$ and $\nu$ and a cost matrix $C = \left\{c(x,y)\right\}_{(x,y) \in \XX \times \XX}$. 

In the numerical experiments, we consider two choices of cost matrix, namely $c(x,y) = d_G(x,y)$ and $c(x,y) = d_{T_\ast}(x,y)$, corresponding to different ground costs. Since these ground costs are distances induced by a graph, the OT problem can also be formulated as an instance of a minimum-cost flow problem via the Beckman formulation \eqref{eq:beck}. Nevertheless, for simplicity in these illustrative numerical experiments, we restrict our comparison to the network simplex algorithm implemented in the POT library, which readily provides both an optimal transport plan and a dual potential.

\subsection{Description of the finite metric space $\XX$ and graph $G$}

We report numerical experiments on OT between images of size $p \times p$ taken from the MNIST database \cite{MNIST}. The metric space $\XX$ that we consider consists of $N$ distinct points $X_1,\dots,X_N$ representing the pixel locations of a two-dimensional square image. The underlying graph is thus the regular two-dimensional lattice 
of size $p \times p$, whose vertices can be identified with the subset
$$
\XX = \{(k,\ell) \in \mathbb{N}^2 : 1 \le k,\ell \le p \},
$$
so that the cardinality of $\XX$ is $N = p^2$. Each vertex of $\XX$ corresponds to one pixel of a given image. The graph $G$ with vertex set $\mathcal{X}$ is defined as the Cartesian grid graph: two vertices $(k,\ell)$ and $(k',\ell')$ are adjacent if and 
only if they differ by one unit in exactly one coordinate, that is, for a given vertex $(k,\ell)$ its four neighbours are 
$$
(k',\ell') \in \{ (k+1,\ell),\ (k-1,\ell),\ (k,\ell+1),\ (k,\ell-1) \},
$$
with an appropriate modifications for pixels at the boundaries of the images that have either three neighbours or two for pixels at the corners.  All resulting edges are assigned the weight $1/N$.
Examples of two such measures $\mu$ and $\nu$ supported on the underlying graph $G$ as described above are displayed in Figure \ref{fig:mu0nu1_both_small} and Figure \ref{fig:mu0nu1_both_large}. Raw digits images from the MNIST database have been subsampled by either a factor 4 or a factor 2 for better visualisation of the graph.  We consider two settings. In the first one, the measures $\mu$ and $\nu$ are obtained by normalizing the pixel values of two raw images so that their total mass equals one. In the second setting, the images are perturbed by additive random noise before normalization. In this noisy case, the resulting measures $\mu$ and $\nu$ can be regarded as weakly non-degenerate in the sense of Definition~\ref{def:weak-non-deg}.


\begin{figure}[htbp]
\centering
{\subfigure[Measure $\mu$ (noiseless setting) ]{\includegraphics[width=0.45 \textwidth,height=0.45\textwidth]{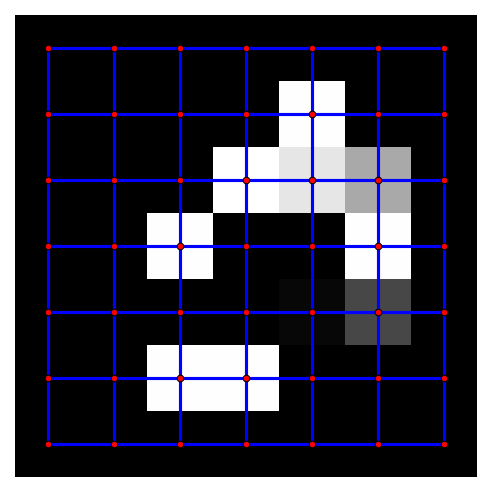}}}
{\subfigure[Measure $\nu$  (noiseless setting) ]{\includegraphics[width=0.45 \textwidth,height=0.45\textwidth]{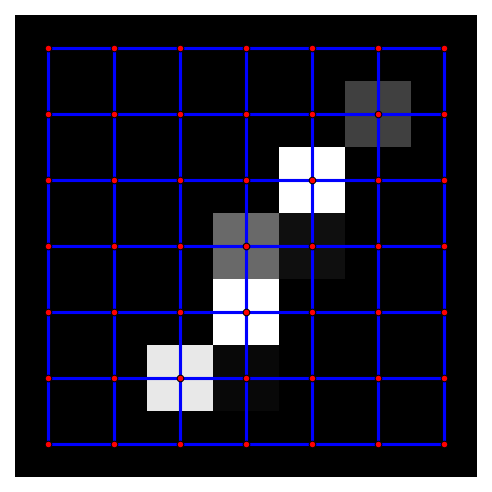}}}

{\subfigure[Measure $\mu$ (noisy setting) ]{\includegraphics[width=0.45 \textwidth,height=0.45\textwidth]{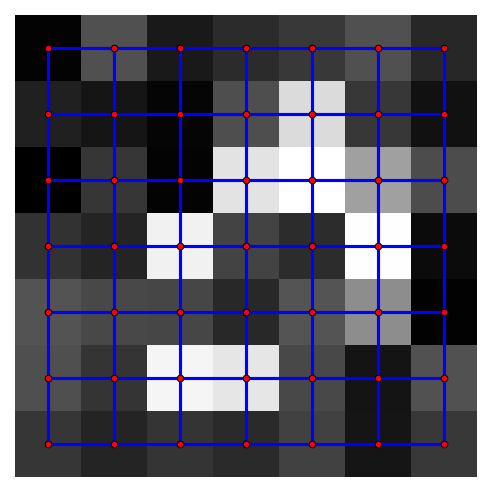}}}
{\subfigure[Measure $\nu$  (noisy setting) ]{\includegraphics[width=0.45 \textwidth,height=0.45\textwidth]{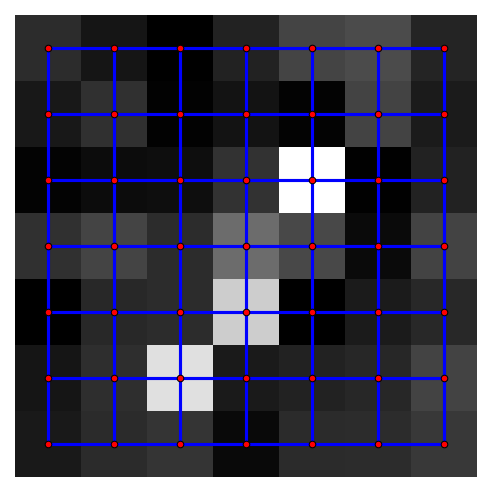}}}

\caption{Probability measures $\mu$ and $\nu$ supported on a regular two-dimensional lattice $G$ of size $p \times p$, with $p = 7$, in both the noiseless and noisy settings. In all the figures, the red dots represent the set $\XX$ of vertices. The edges  of the graph $G$ are shown as  vertical and horizontal blue segments. }  \label{fig:mu0nu1_both_small}
\end{figure}

\begin{figure}[htbp]
\centering
{\subfigure[Measure $\mu$ (noiseless setting) ]{\includegraphics[width=0.45 \textwidth,height=0.45\textwidth]{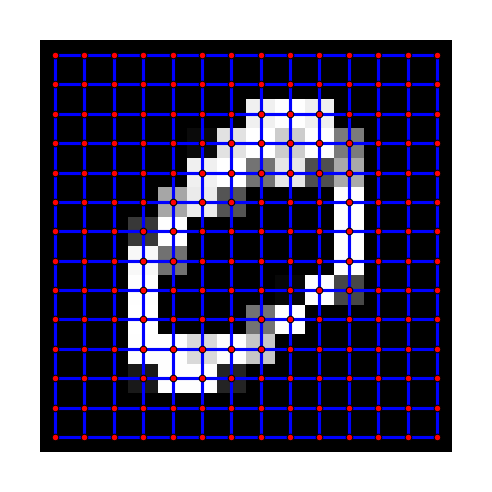}}}
{\subfigure[Measure $\nu$  (noiseless setting) ]{\includegraphics[width=0.45 \textwidth,height=0.45\textwidth]{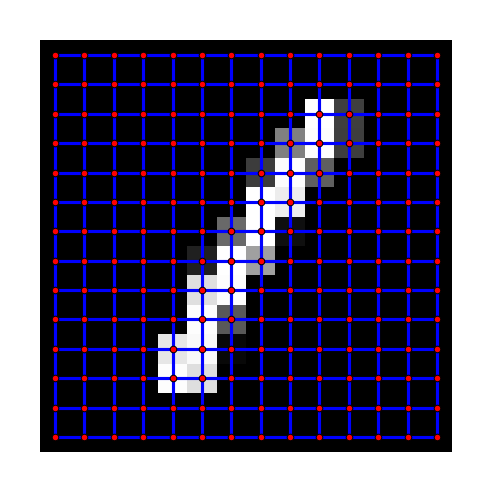}}}

{\subfigure[Measure $\mu$ (noisy setting) ]{\includegraphics[width=0.45 \textwidth,height=0.45\textwidth]{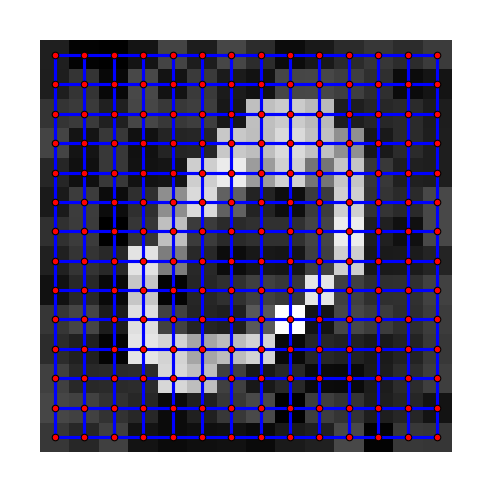}}}
{\subfigure[Measure $\nu$  (noisy setting) ]{\includegraphics[width=0.45 \textwidth,height=0.45\textwidth]{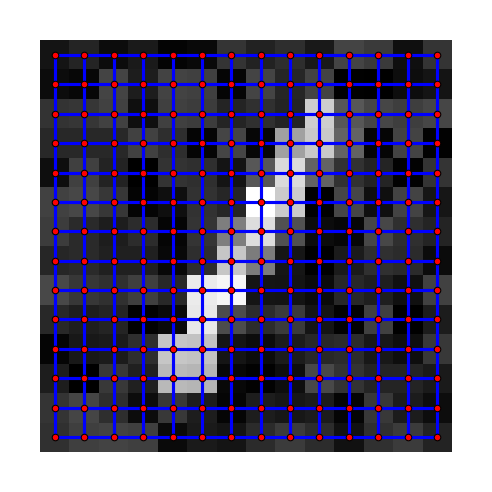}}}

\caption{Probability measures $\mu$ and $\nu$ supported on a regular two-dimensional lattice $G$ of size $p \times p$, with $p = 14$, in both the noiseless and noisy settings. In all the figures, the red dots represent the set $\XX$ of vertices. The edges  of the graph $G$ are shown as  vertical and horizontal blue segments. }  \label{fig:mu0nu1_both_large}
\end{figure}

\subsection{Evaluation of the performances of the SA algorithm}

For the probability measures displayed in Figure \ref{fig:mu0nu1_both_small} and Figure \ref{fig:mu0nu1_both_large}, we report results on the computation of an optimal coupling using Algorithm  \ref{alg:tree_transport} and the dual potential given by \eqref{eq:pot_intro} thanks to the use of the SA algorithm described in Section \ref{sec:algoSA} to build an optimal spanning tree $T_{\ast}$ solving the minimization problem~\eqref{min:ST}. The performances of our approach are evaluated by comparing its outputs with those given by the network simplex algorithm implemented in the POT library  that yields the ground truth value of the K-distance, an optimal coupling and a Kantorovich dual potential. The results are reported in Figure \ref{fig:mu0nu1_noiseless_small} to Figure \ref{fig:mu0nu1_noise_large}, where the initial tree $T_0$ and the optimal spanning tree $T_{\ast}$ found by SA are displayed. In all the figures displaying trees, the red dots represent the set $\mathcal{X}$ of vertices, while the edges of a tree are shown as horizontal and vertical blue segments, and the root vertex is highlighted in green.

 In all setting (noiseless/noisy and $p=7$/$p=14$), the SA algorithm converges in the sense that $ K_{d_{T_{m}}}(\mu,\nu) $ equals  $ K_{d_{G}}(\mu,\nu) $ (computed with POT) after $m=10^6$ (when $p=7$) or $m=10^8$ (when $p=14$) iterations. In the noiseless setting, the measures $\mu$ and $\nu$ being degenerated, the dual potential  given by \eqref{eq:pot_intro} differs from the one computed with POT, as it can be seen from Figures \ref{fig:mu0nu1_noiseless_small}(c,d) and Figures \ref{fig:mu0nu1_noiseless_large}(c,d). Conversely, in the noisy setting, the  measures $\mu$ and $\nu$ are weakly non-degenerated (almost surely), and there exists a unique dual potential (almost surely) that one may construct using either Algorithm  \ref{alg:tree_transport} or POT, as it can be seen from Figures \ref{fig:mu0nu1_noise_small}(c,d) and Figures \ref{fig:mu0nu1_noise_large}(c,d).

Finally, in Figures \ref{fig:mu0nu1_noiseless_small}(f,g,h) to Figure \ref{fig:mu0nu1_noise_large}(f,g,h),  optimal transport plans are visualized using green arrows whose thickness is proportional to the amount of mass transported between pairs of vertices. These arrows are displayed together with the superimposed optimal spanning tree $T_\ast$. We compare the 
optimal coupling $\gamma_{T_\ast}$ found using Algorithm  \ref{alg:tree_transport} with the optimal coupling found using POT  for the ground cost $d_{T_\ast}$, and the optimal coupling found using POT  for the ground cost $d_{G}$. For each value of $p$ and noiseless/noisy setting, we observe that these three transport plans are always different.

Moreover, it can be seen from  Figures \ref{fig:mu0nu1_noiseless_small}(f,g) to Figure \ref{fig:mu0nu1_noise_large}(f,g) that the plans computed using either Algorithm  \ref{alg:tree_transport} or OT with ground cost $d_{T_\ast}$ satisfies the  property in Theorem \ref{thm:plan} claiming that  for every $(x,y) \in \XX \times \XX$ with $\gamma_{T_\ast}(x,y)>0$, one has that $d_G(x,y) = d_{T_\ast}(x,y)$, meaning that  the edges with positive transport in $T_\ast$ belong to the geodesic graph of $G$. 

\begin{figure}[htbp]
\centering
{\subfigure[Initial spanning tree $T_0$ to start SA]{\includegraphics[width=0.45 \textwidth,height=0.45\textwidth]{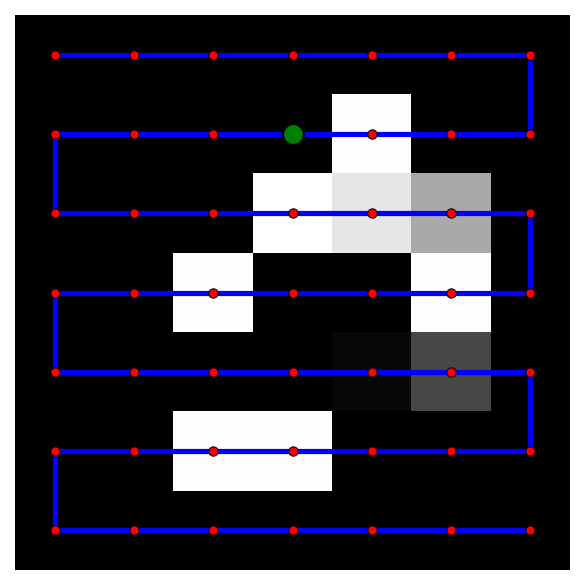}}}
{\subfigure[Optimal spanning tree $T_\ast$ found by SA  ]{\includegraphics[width=0.45 \textwidth,height=0.45\textwidth]{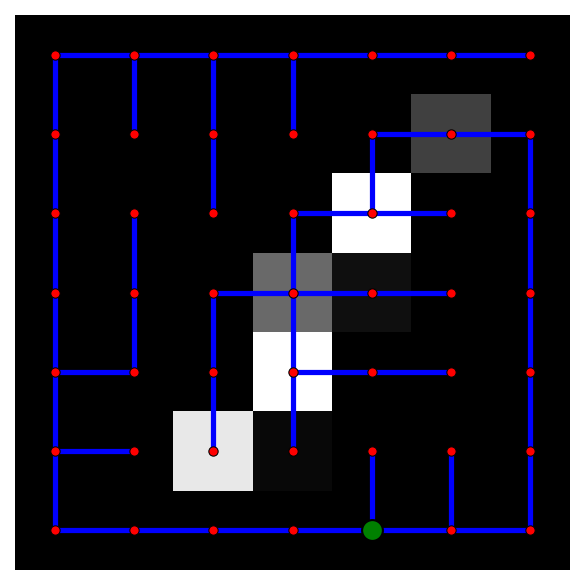}}}

{\subfigure[Dual potential $u_{T_\ast}$ found using SA and formula \eqref{eq:pot_intro}]{\includegraphics[width=0.25\textwidth,height=0.25\textwidth]{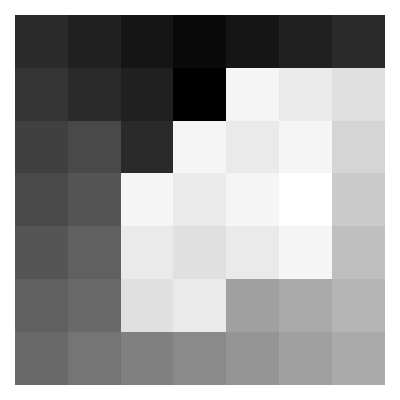}}}
{\subfigure[Dual potential $u_G$ found using POT and the ground cost $d_{G}$]{\includegraphics[width=0.25\textwidth,height=0.25\textwidth]{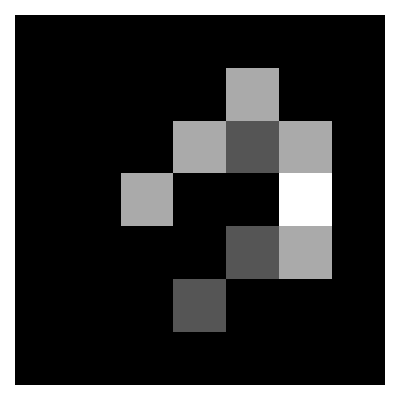}}}
{\subfigure[Convergence of SA along $10^6$ iterations]{\includegraphics[width=0.4 \textwidth,height=0.25\textwidth]{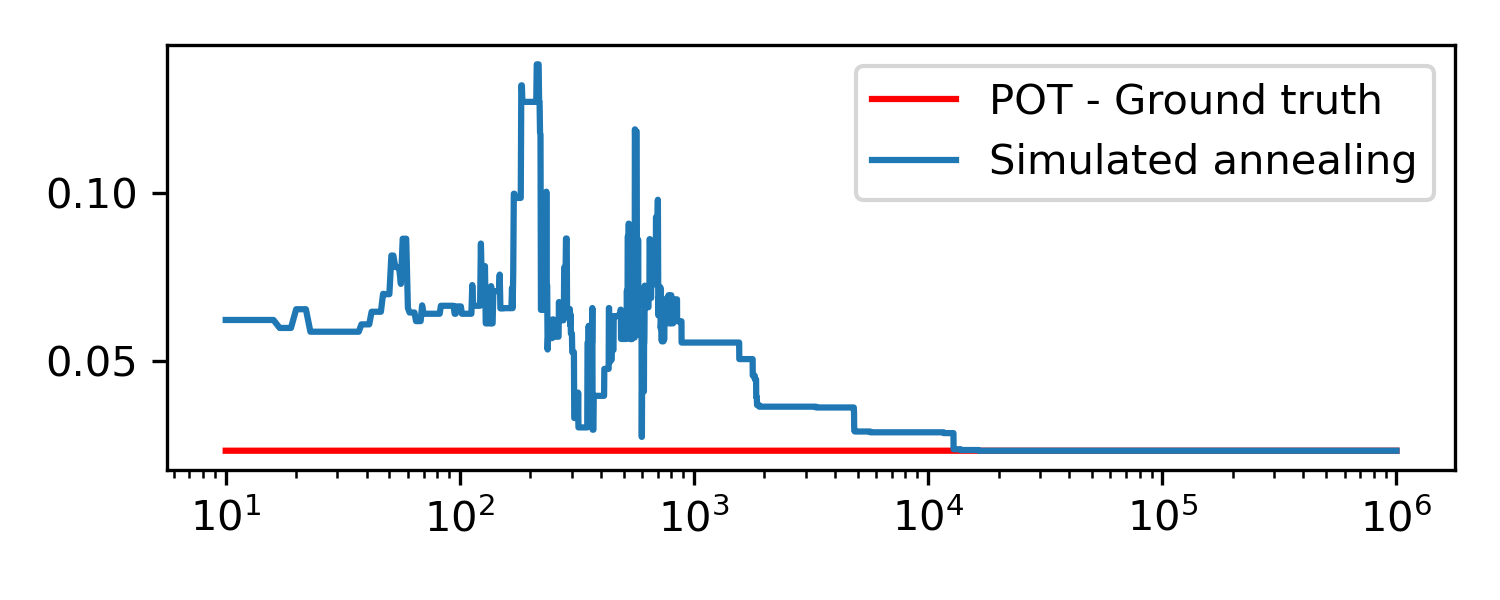}}}

{\subfigure[Optimal coupling $\gamma_{T_\ast}$ found using Algorithm  \ref{alg:tree_transport}]{\includegraphics[width=0.32 \textwidth,height=0.32\textwidth]{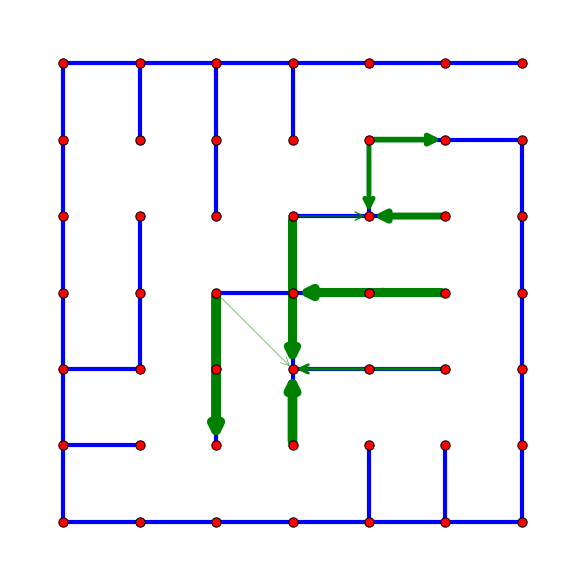}}}
{\subfigure[Optimal coupling found using POT  and the ground cost $d_{T_\ast}$]{\includegraphics[width=0.32 \textwidth,height=0.32\textwidth]{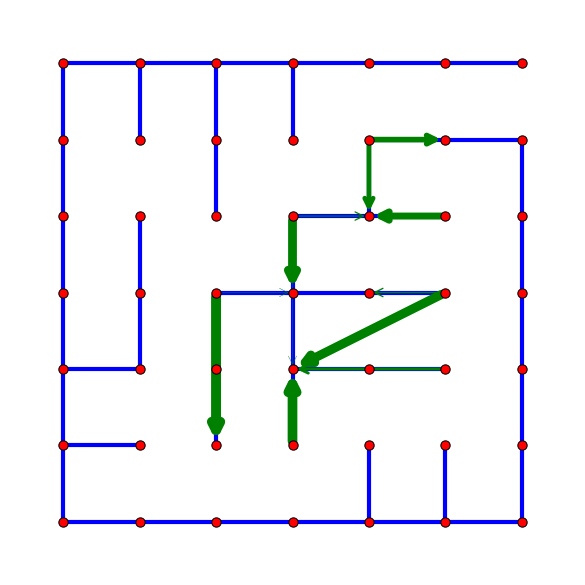}}}
{\subfigure[Optimal coupling found using POT  and the ground cost $d_{G}$]{\includegraphics[width=0.32 \textwidth,height=0.32\textwidth]{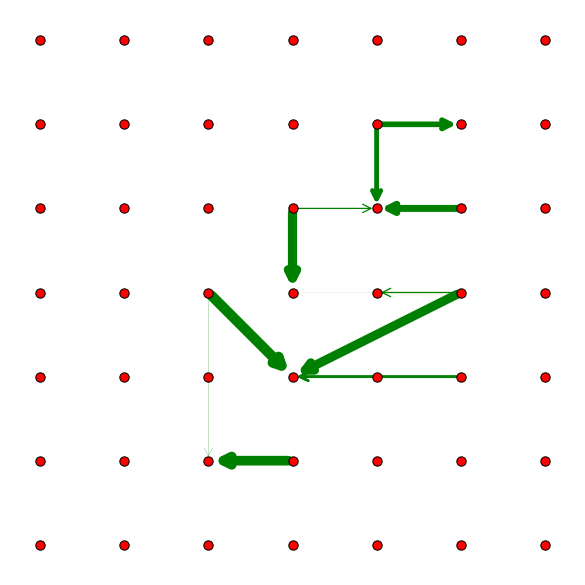}}}

\caption{OT using Simulated Annealing (SA) in the noiseless setting  $p=7$.
}   \label{fig:mu0nu1_noiseless_small}
\end{figure}

\begin{figure}[htbp]
\centering
{\subfigure[Initial spanning tree $T_0$ to start SA]{\includegraphics[width=0.45 \textwidth,height=0.45\textwidth]{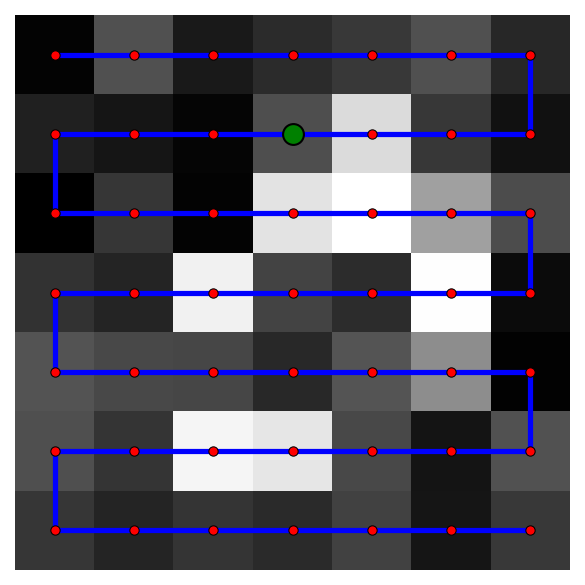}}}
{\subfigure[Optimal spanning tree $T_\ast$   found by SA ]{\includegraphics[width=0.45 \textwidth,height=0.45\textwidth]{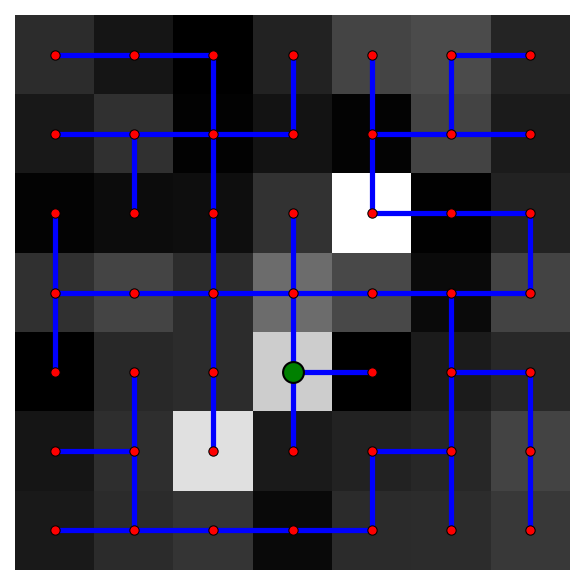}}}

{\subfigure[Dual potential $u_{T_\ast}$ found using SA and formula \eqref{eq:pot_intro}]{\includegraphics[width=0.25\textwidth,height=0.25\textwidth]{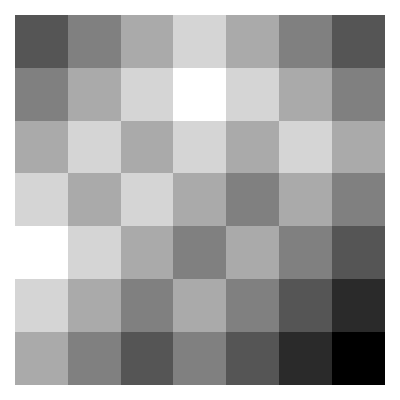}}}
{\subfigure[Dual potential  $u_G$ found using POT and the ground cost $d_{G}$]{\includegraphics[width=0.25\textwidth,height=0.25\textwidth]{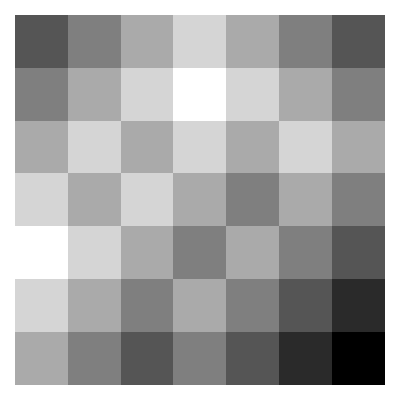}}}
{\subfigure[Convergence of SA along $10^6$ iterations]{\includegraphics[width=0.4 \textwidth,height=0.25\textwidth]{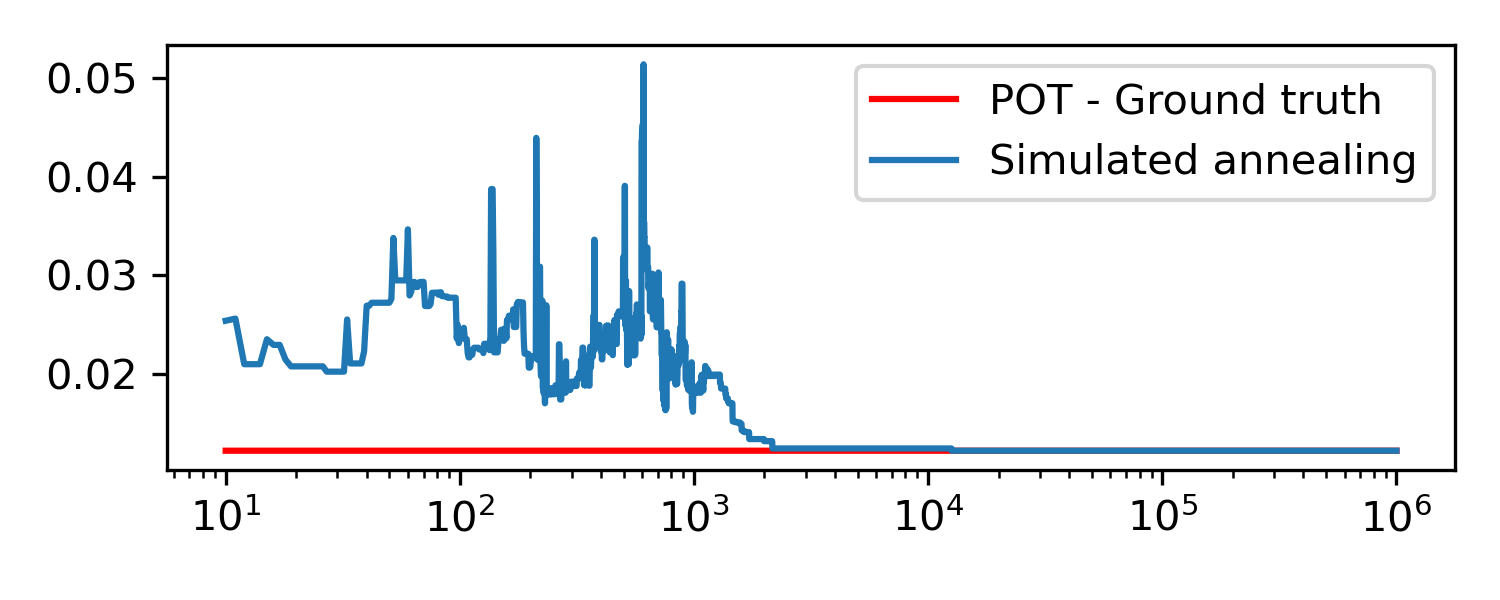}}}

{\subfigure[Optimal coupling  $\gamma_{T_\ast}$ found using Algorithm  \ref{alg:tree_transport}]{\includegraphics[width=0.32 \textwidth,height=0.32\textwidth]{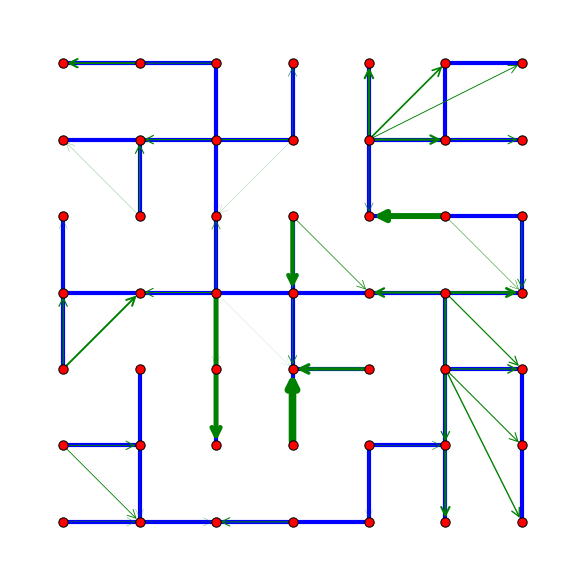}}}
{\subfigure[Optimal coupling found using POT  and the ground cost $d_{T_\ast}$]{\includegraphics[width=0.32 \textwidth,height=0.32\textwidth]{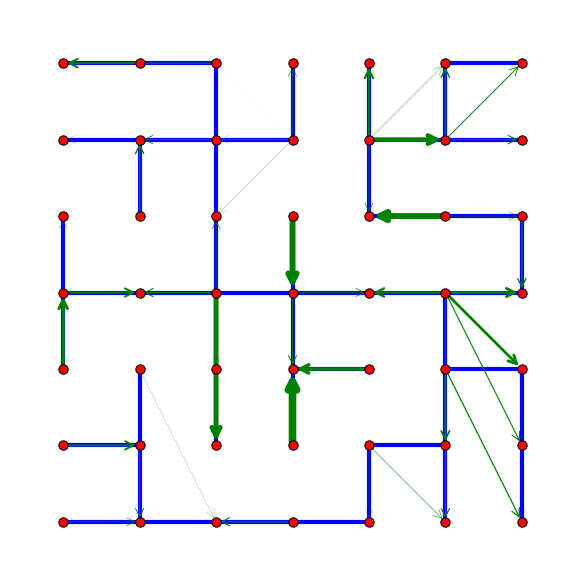}}}
{\subfigure[Optimal coupling found using POT  and the ground cost $d_{G}$]{\includegraphics[width=0.32 \textwidth,height=0.32\textwidth]{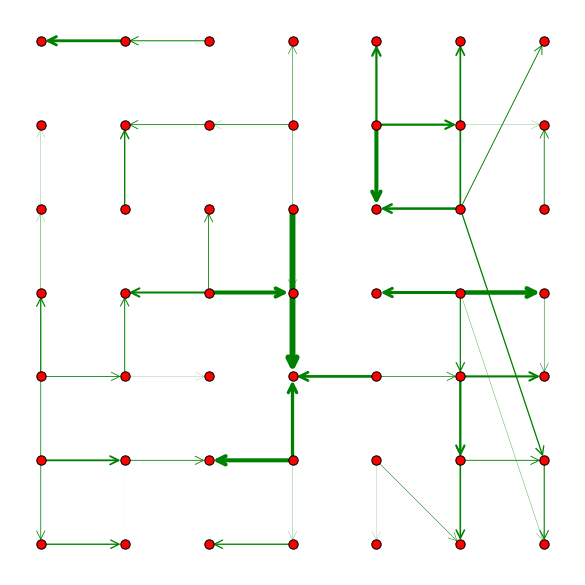}}}

\caption{OT using Simulated Annealing (SA) in the noisy setting  $p=7$.
}   \label{fig:mu0nu1_noise_small}
\end{figure}


\begin{figure}[htbp]
\centering
{\subfigure[Initial spanning tree $T_0$ to start SA]{\includegraphics[width=0.45 \textwidth,height=0.45\textwidth]{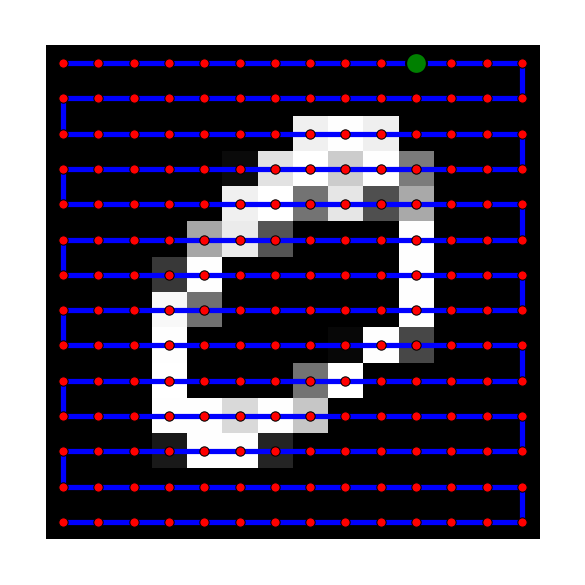}}}
{\subfigure[Optimal spanning tree $T_\ast$   found by SA ]{\includegraphics[width=0.45 \textwidth,height=0.45\textwidth]{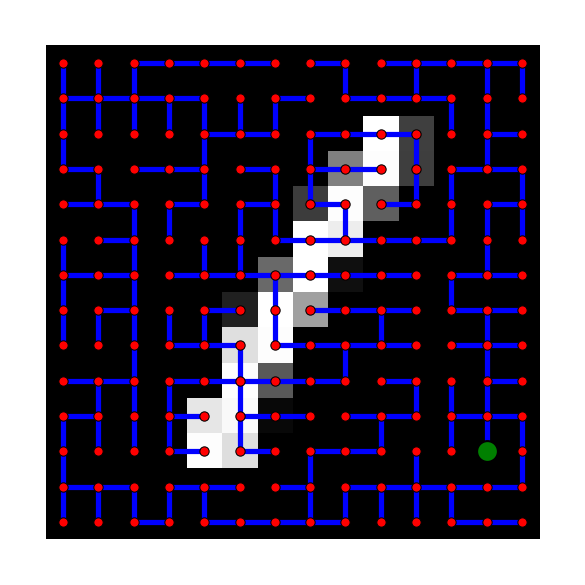}}}

{\subfigure[Dual potential $u_{T_\ast}$ found using SA and formula \eqref{eq:pot_intro}]{\includegraphics[width=0.25\textwidth,height=0.25\textwidth]{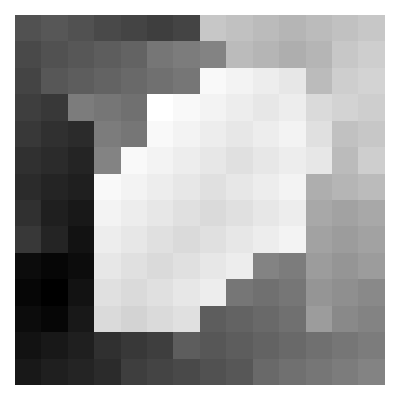}}}
{\subfigure[Dual potential $u_G$ found using POT and the ground cost $d_{G}$]{\includegraphics[width=0.25\textwidth,height=0.25\textwidth]{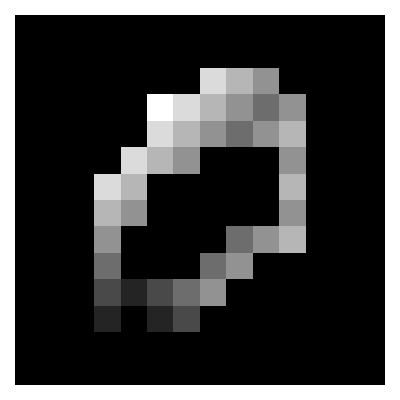}}}
{\subfigure[Convergence of SA along $10^7$ iterations]{\includegraphics[width=0.4 \textwidth,height=0.25\textwidth]{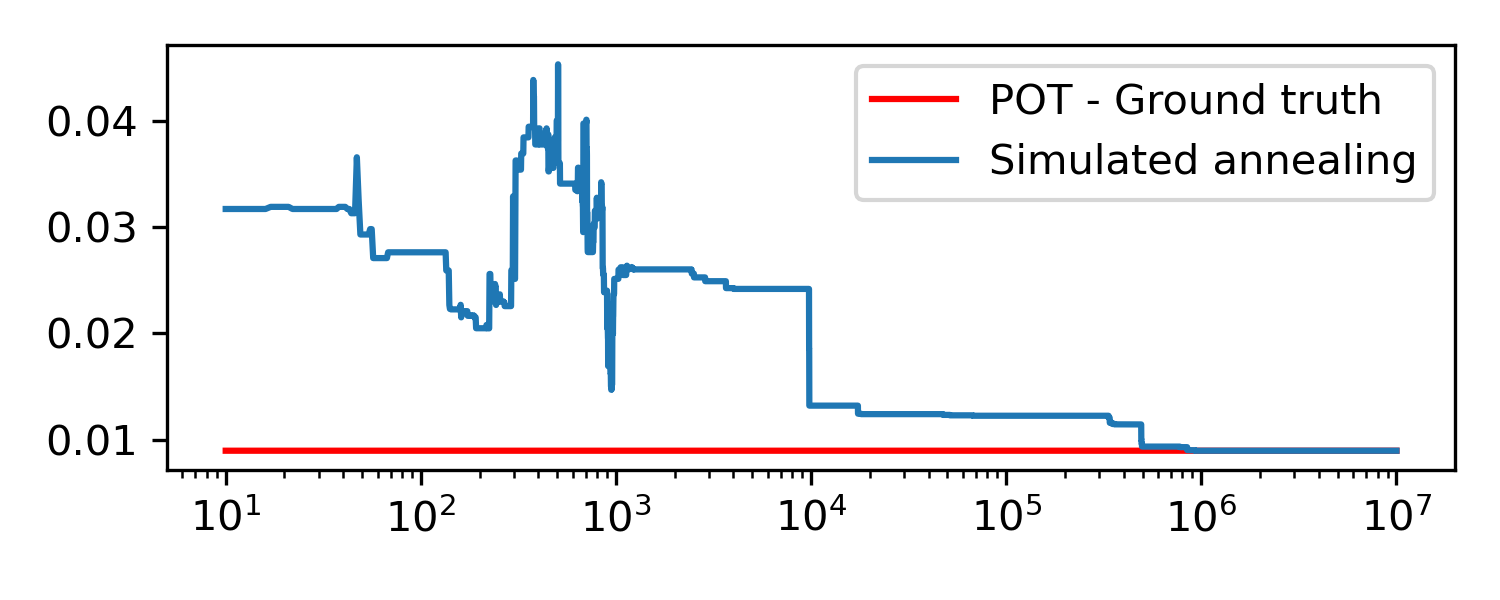}}}

{\subfigure[Optimal coupling $\gamma_{T_\ast}$ found using Algorithm  \ref{alg:tree_transport} ]{\includegraphics[width=0.32 \textwidth,height=0.32\textwidth]{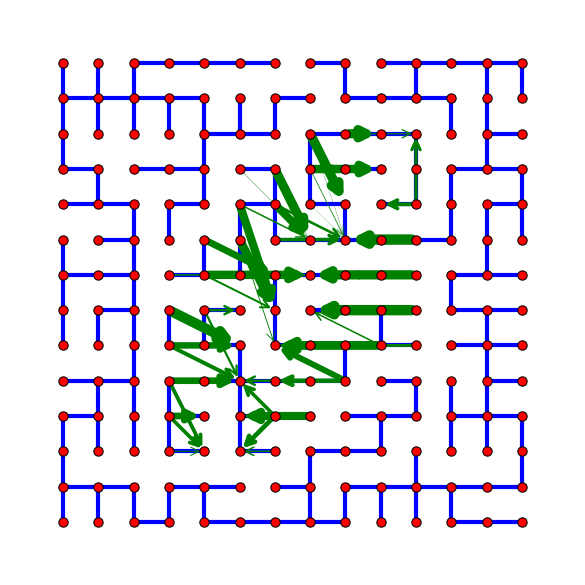}}}
{\subfigure[Optimal coupling found using POT and the ground cost $d_{T_\ast}$]{\includegraphics[width=0.32 \textwidth,height=0.32\textwidth]{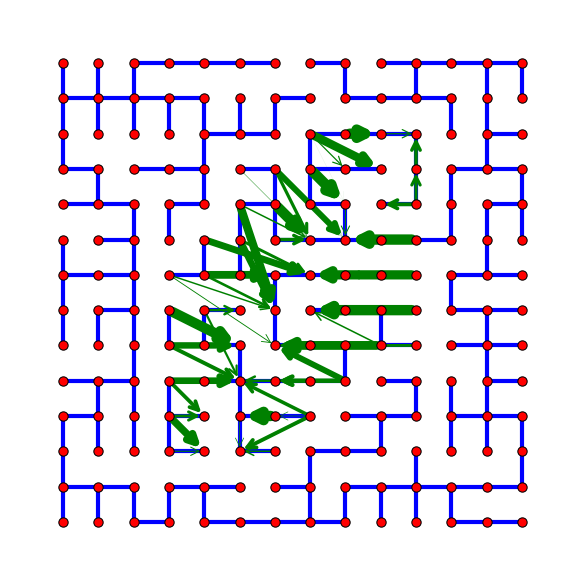}}}
{\subfigure[Optimal coupling found using POT and the ground cost $d_{G}$]{\includegraphics[width=0.32 \textwidth,height=0.32\textwidth]{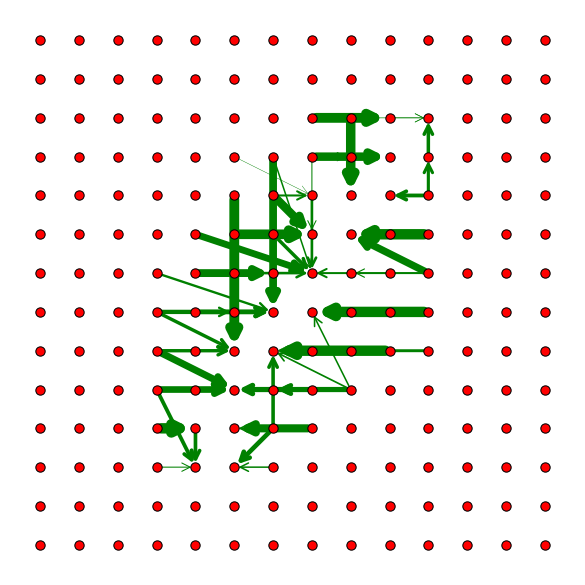}}}

\caption{OT using Simulated Annealing (SA) in the noiseless setting with $p=14$.
}   \label{fig:mu0nu1_noiseless_large}
\end{figure}

\begin{figure}[htbp]
\centering
{\subfigure[Initial spanning tree $T_0$ to start SA]{\includegraphics[width=0.45 \textwidth,height=0.45\textwidth]{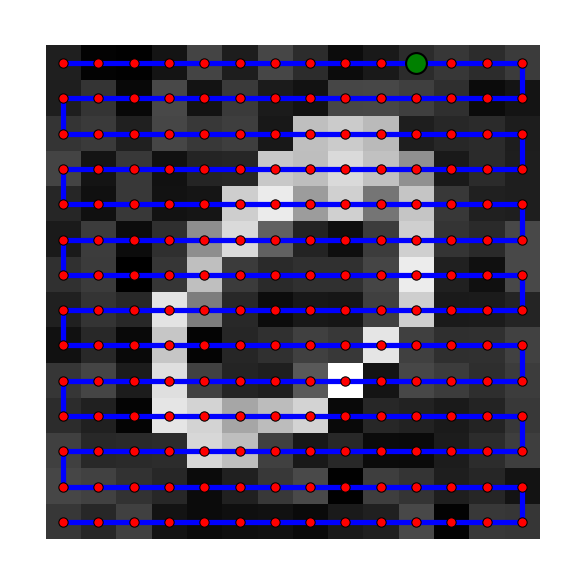}}}
{\subfigure[Optimal spanning tree $T_\ast$   found by SA ]{\includegraphics[width=0.45 \textwidth,height=0.45\textwidth]{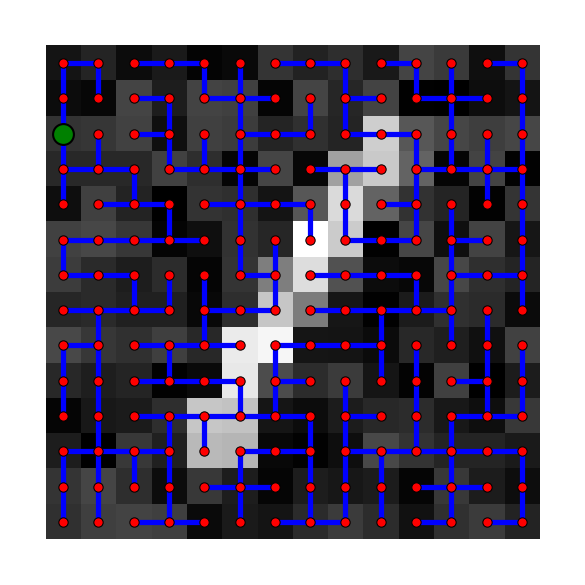}}}

{\subfigure[Dual potential $u_{T_\ast}$ found using SA and formula \eqref{eq:pot_intro}]{\includegraphics[width=0.25\textwidth,height=0.25\textwidth]{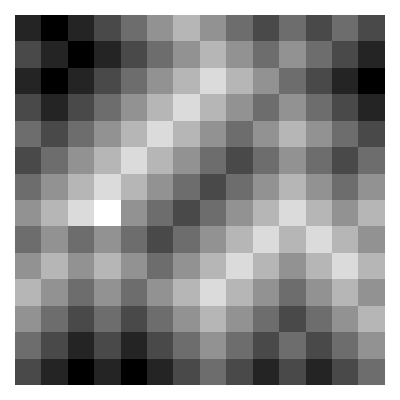}}}
{\subfigure[Dual potential  $u_G$ found using POT and the ground cost $d_{G}$]{\includegraphics[width=0.25\textwidth,height=0.25\textwidth]{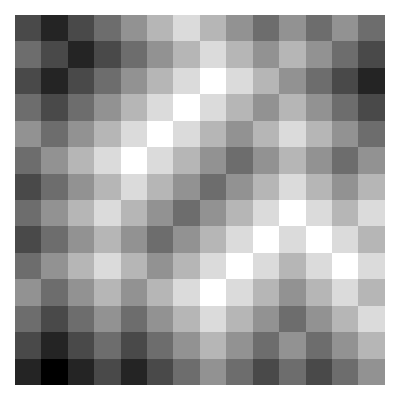}}}
{\subfigure[Convergence of SA along $10^8$ iterations]{\includegraphics[width=0.4 \textwidth,height=0.25\textwidth]{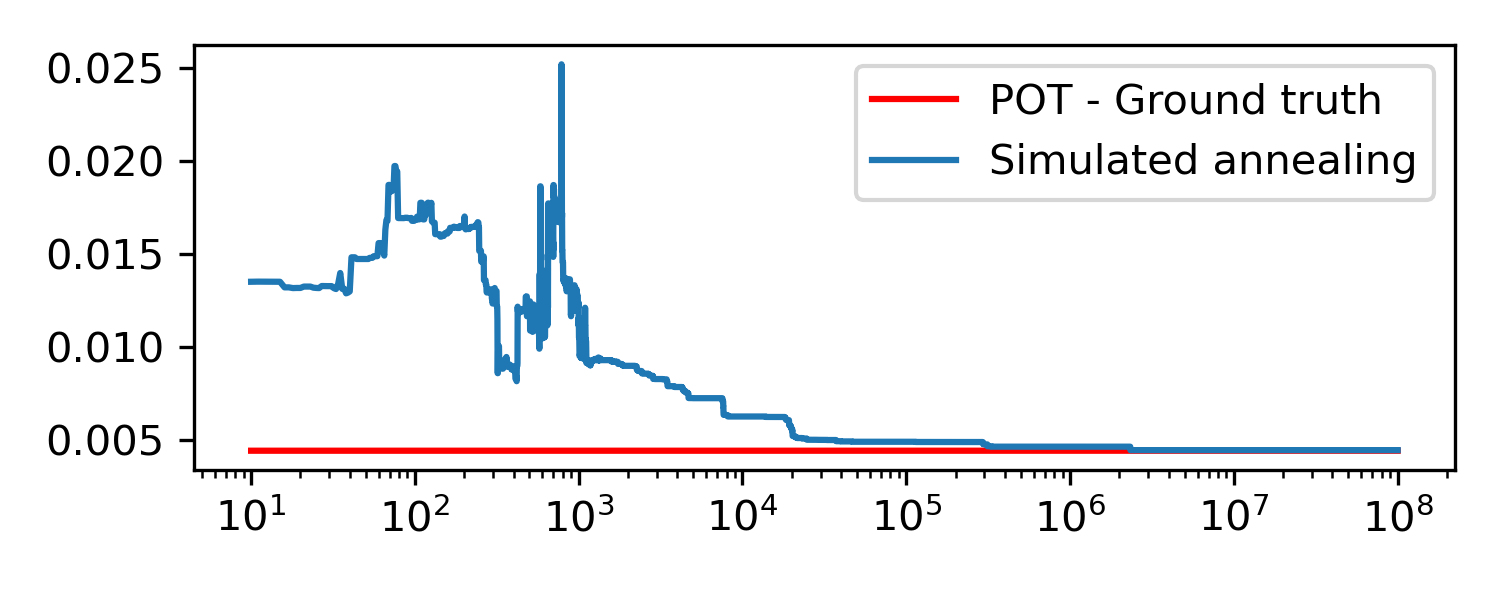}}}

{\subfigure[Optimal coupling  $\gamma_{T_\ast}$ found using Algorithm  \ref{alg:tree_transport}]{\includegraphics[width=0.32 \textwidth,height=0.32\textwidth]{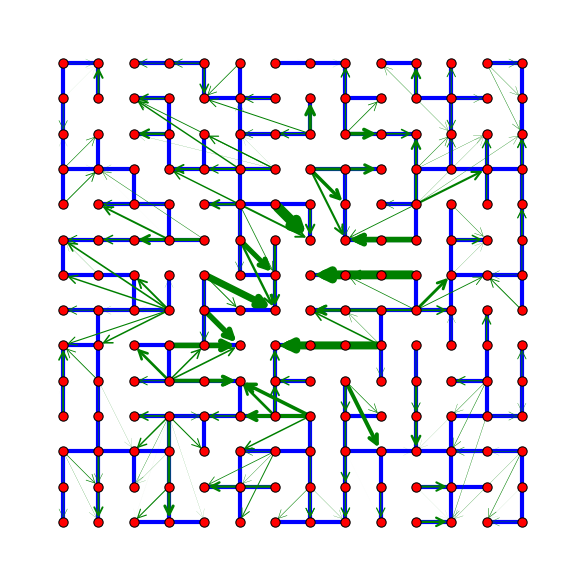}}}
{\subfigure[Optimal coupling found using POT  and the ground cost $d_{T_\ast}$]{\includegraphics[width=0.32 \textwidth,height=0.32\textwidth]{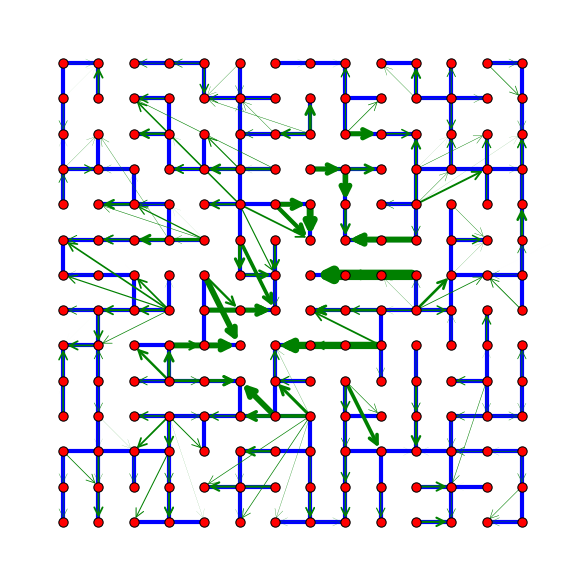}}}
{\subfigure[Optimal coupling found using POT  and the ground cost $d_{G}$]{\includegraphics[width=0.32 \textwidth,height=0.32\textwidth]{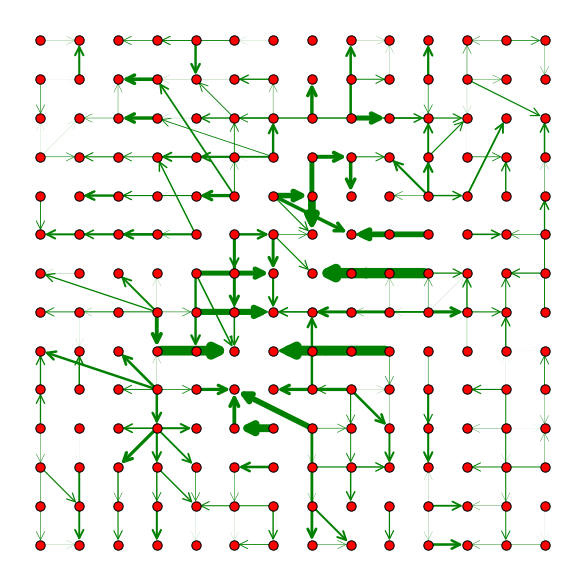}}}

\caption{OT using Simulated Annealing (SA) in the noisy setting with $p=14$.
}   
\label{fig:mu0nu1_noise_large}
\end{figure}

\section{Conclusion and perspectives}\label{sec:conclusion}

We studied optimal transport between probability measures supported on the same finite metric space endowed with a graph-induced distance, leveraging a reformulation of the problem as a minimization over spanning trees. Under a weak non-degeneracy condition, we proved the uniqueness of the Kantorovich potential and derived an explicit expression based on an imbalanced cumulative function along an optimal spanning tree, revealing a close analogy with monotone transport on the real line. From a computational viewpoint, we proposed a stochastic algorithm based on simulated annealing to estimate the Kantorovich distance, together with a dynamic programming procedure to compute an optimal transport plan and a dual potential. Overall, our approach provides a computational alternative to classical network simplex and minimum-cost flow methods for optimal transport on graphs.

An interesting direction for future research concerns the theoretical analysis of the convergence of the simulated annealing algorithm for minimizing the OT functional over the space of spanning trees. Another interesting perspective is the application of the proposed framework to large-scale problems in computational optimal transport.

\bibliographystyle{siam}

\end{document}